\newtheorem{thm}{Theorem}[section]
\newtheorem{cor}[thm]{Corollary}
\newtheorem{prop}[thm]{Proposition}
\newtheorem{lem}[thm]{Lemma}
\theoremstyle{definition}
\newtheorem{defn}[thm]{Definition}
\newtheorem{exmp}[thm]{Example}
\newtheorem{notn}[thm]{Notation}
\newtheorem{rem}[thm]{Remark}
\newtheorem{sch}[thm]{Scholium}
\let\c@equation\c@thm
\numberwithin{equation}{section}
\let\SK@label\label\fi
 \let\your@thm\@thm
 \def\@thm#1#2#3{\gdef\currthmtype{#3}\your@thm{#1}{#2}{#3}}
 \def\mylabel#1{{\let\your@currentlabel\@currentlabel\def\@currentlabel
  {\currthmtype~\your@currentlabel}
 \SK@label{#1@}}\label{#1}}
 \def\myref#1{\ref{#1@}}
\newcommand{\bp}{x_0}
\newcommand{\Ch}{\sC\!at(\tilde G,}
\begin{document}

\title{Categorical models for equivariant classifying spaces}

\author{B.J. Guillou}
\address{Department of Mathematics, The University of Kentucky, Lexington, KY, 40506}
\email{bertguillou@uky.edu}
\author{J.P. May}
\address{Department of Mathematics, The University of Chicago, Chicago, IL 60637}
\email{may@math.uchicago.edu}
\author{M. Merling}
\address{Department of Mathematics, The University of Chicago, Chicago, IL 60637}
\email{mmerling@math.uchicago.edu}

%\thanks{Received by the editor May 8, 2000\\
%The authors were partially supported by the NSF}

%\subjclass{Primary 55P42, 55P43, 55P91;\\
%Secondary 18A25, 18E30, 55P48, 55U35}
 
\begin{abstract}
Starting categorically, we give simple and
precise models for classifying spaces of equivariant principal bundles.
We need these models for work in progress in
equivariant infinite loop space theory and equivariant
algebraic $K$-theory, but the models are of independent
interest in equivariant bundle theory and especially
equivariant covering space theory.
\end{abstract}

\maketitle

\vspace{-8mm}

\tableofcontents

\section*{Introduction}
Let $\PI$ and $G$ be topological groups and let $G$ act on $\PI$, so
that we have a semi-direct product $\GA = \PI \rtimes G$ and a split
extension
\begin{equation}\label{split}  
\xymatrix@1{ 1 \ar[r] & \PI \ar[r]^-{\subset} & \GA \ar[r]^-{q} & G \ar[r] &  1.\\} 
\end{equation}
The underlying space of $\GA$ is $\PI \times G$, and the product is given by
\[  (\si,g)(\ta,h) = (\si (g\cdot\ta),gh).\]
There is a general theory of $(G,\PI_G)$-bundles \cite{tD, La, LM, EHCT} corresponding 
to such extensions. Here $\PI_G$ denotes $\PI$ together with its given action of $G$.
We shall only be interested in principal $(G,\PI_G)$-bundles $p\colon E\rtarr B$.

\begin{defn}\mylabel{first1} Let $p\colon E\rtarr B$ be a principal $\PI$-bundle where $B$ is a
$G$-space.  Then $p$ is a principal $(G,\PI_G)$-bundle if the (free) action of 
$\PI$ on $E$ extends to an action of $\GA$ and $p$ is a $\GA$-map, where $\GA$ 
acts on $B$ through the quotient map $\GA\rtarr G$. 
\end{defn}

The more general theory of 
$(\PI;\GA)$-bundles applicable to non-split extensions $\GA$ is included in 
\cite{LM, May, EHCT}.  The theory is especially familiar when $G$ acts trivially 
on $\PI$, so that $\GA = G\times \PI$.  With $\PI=O(n)$ 
or $U(n)$, the trivial action case gives classical equivariant bundle theory 
and equivariant topological $K$-theory.

\begin{defn}\mylabel{first2}
A principal $(G,\PI_G)$-bundle $p\colon E\rtarr B$ is universal if for all $G$-spaces $X$ of
the homotopy types of $G$-CW complexes, pullback of $p$ along $G$-maps $f\colon X\rtarr B$ induces a natural bijection from the
set of homotopy classes of $G$-maps $X\rtarr B$ to the set of equivalence classes of $(G,\PI_G)$-bundles over $X$.
\end{defn}

For applications in equivariant
infinite loop space theory and equivariant algebraic $K$-theory, we need 
to understand classifying $G$-spaces for $(G,\PI_G)$-bundles as 
classifying spaces of categories.  Nonequivariantly, it was already emphasized 
in Segal's classical paper \cite[\S3]{Segal} that the universal principal $\PI$-bundle of a topological
group $\PI$ can be constructed on the level of topological categories, and
the intuition is that we are giving the equivariant generalization of 
his classical construction.  

One motivation is to give new constructions of $E_{\infty}$ operads of $G$-categories and
$G$-spaces.  This much only requires trivial actions of $G$ on $\PI$.  
By definition, the $j$th-space of an $E_{\infty}$ operad of $G$-spaces is a universal
principal $(G,\SI_j)$-bundle.  Having various category level models for such classifying
spaces allows us to construct examples of $E_{\infty}$ $G$-spaces from $E_{\infty}$ categories, 
and these feed into equivariant infinite loop space machines to construct interesting $G$-spectra \cite{GM, MMO}. 

The examples relevant to the equivariant algebraic $K$-theory of $G$-rings, namely rings with $G$-action by automorphisms, 
require more general split extensions. If $R$ is a $G$-ring, then $G$ acts entrywise on $GL(n, R)$.  The classifying spaces 
of $(G, GL(n, R)_G)$-bundles are central to the definition of the genuine equivariant algebraic $K$-theory spectrum $\bK_G(R)$ 
of $R$ \cite{GM, Merling}. Our treatment of the fixed point spaces of the classifying spaces of equivariant bundles is crucial
to determining the fixed point spectra of the $\bK_G(R)$. The paradigmatic example is a finite Galois extension $E/F$ with 
Galois group $G$.  As explained in \cite[\S\S 4.5, 8.2]{GM}, it is an immediate 
application of examples in this paper, which demonstrate the relevance of Hilbert's theorem 90, that the fixed point spectrum
$\bK_G(E)^H$ is the classical nonequivariant $K$-theory spectrum of the fixed field $E^H$.  
The use of genuine $G$-spectra in algebraic $K$-theory is new and is explored in \cite{Merling}. 

The results we need are close to
those of \cite{La, LM, May} and those stated by Murayama and 
Shimakawa \cite{MS}\footnote{But see \myref{scholium}.},
but we require a more precise and rigorous categorical and topological 
understanding than the literature affords. This is intended as a service paper 
that displays the relevant constructions in their fullblown simplicity.  

We start with the topologized equivariant version of the
elementary theory of chaotic categories in \S\ref{Sec1}.
We analyze a general construction that specializes 
to give our classifying $G$-spaces in \S\ref{Sec2}.  
We show how it gives universal equivariant bundles in \S\ref{Sec3}.
Our explicit description of the classifying spaces of $(G,\PI_G)$-bundles 
as classifying spaces of categories allows us to compute their fixed point 
spaces categorically in \S\ref{Sec4}.  
This gives precise information already on the category level, before 
passage to classifying spaces, and that is essential to our applications. 

The main results of the paper  are summarized in the following two theorems: the first gives 
a categorical model for equivariant universal bundles and their classifying spaces, and the 
second gives a description of the fixed points of the classifying spaces of equivariant bundles.
Details of the first are in Theorems \ref{universal} and \ref{Main} and details of the second
are in \ref{FFFxxx}, \ref{FixFix}, and \ref{FixToo}. We need some preliminary definitions and
notations to state these results.

Let $G$ be discrete and let $\tilde{G}$ denote the unique contractible groupoid 
with object set $G$. It is a (right) $G$-category, meaning that $G$ acts on both 
objects and  morphisms.  We agree to identify the topological group $\PI$ 
with the topological groupoid with a single object and with morphism space $\PI$. 
Then the  action of $G$ on $\PI$ makes it a $G$-groupoid. 

For small topological categories $\sA$ and $\sB$, let $\sC\!at(\sA,\sB)$ denote the category of all continuous 
functors $\sA\rtarr \sB$ and all natural transformations.  
When $\sA$ and $\sB$ are $G$-categories, 
$\sC\!at(\sA,\sB)$ inherits an action of $G$
given by conjugation.
We shall give more details in \S\ref{prelim}. 

We assume that the reader is familiar with the classifying space functor $B$ from categories to spaces, 
or more generally from topological categories to spaces.
It works equally well to construct $G$-spaces from 
topological $G$-categories. It is the composite of the nerve 
functor $N$ from topological categories to simplicial spaces
(e.g. \cite[\S7]{MayClass}) and geometric realization $|-|$ from 
simplicial spaces to spaces (e.g. \cite[\S11]{MayGeo}),
both of which are product-preserving functors. 

\begin{thm}
If $G$ is discrete and $\PI$ is either discrete or a 
compact Lie group, then the  canonical map
$$B  \Ch \tilde{\PI}) \rtarr B \Ch \PI) $$ 
is a universal principal $(G, \PI_G)$-bundle.
\end{thm}

Thus the classifying space of the $G$-category $\Ch \PI)$ is a  $G$-space that classifies $(G,\PI_G)$-bundles. 

Crossed homomorphisms, their automorphism groups, and the non-Abelian cohomology group $H^1(G;\PI_G)$
are defined in Definitions \ref{crossedhom}, \ref{crossedhom2}, and \ref{crossedhom3}. 

\begin{thm}
The fixed point category 
$\Ch \PI)^G$ is the disjoint union of the groups ${Aut}\,\al$,
where $\al$ runs over crossed homomorphisms representing the elements of $H^1(G;\PI_G)$.
Equivalently, $\Ch \PI)^G$ is the disjoint union of the groups
$\PI\cap N_{\GA}\LA$, where $\LA$ runs over the $\PI$-conjugacy classes of subgroups 
$\LA$ of $\GA$ such that 
$\LA\cap \PI = e$. Therefore $B\Ch \PI)^G$ is the disjoint union of the classifying
spaces $B(\PI\cap N_{\GA}\LA)$.

\end{thm}

With more work, our hypotheses on $G$ and $\PI$ could surely be weakened.
We should admit that we are especially interested in discrete groups in our
applications.  When $\PI$ is discrete, we are in effect studying equivariant covering spaces, since $\PI$ is the relevant structural group. We keep the topology 
in place whenever we can do so easily because the results should have broader applications.  
In fact, however, there is an earlier topological analogue of our categorical construction in terms of mapping spaces rather than
mapping categories \cite{May}. It applies in considerably greater topological generality, 
but it does not generally start categorically.
We compare the categorical and topological constructions in \S\ref{Sec5}.

The choices of $\PI$ relevant to equivariant infinite loop space theory and equivariant algebraic $K$-theory, namely symmetric groups and the general linear groups of $G$-rings, have alternative categorical models, which play a key role. These alternative categorical models are given in \S6, which is entirely algebraic,
with all groups discrete.  We call special attention to \S6.2, where we relate crossed homomorphisms to skew group rings and their skew modules.  The algebraic ideas here may not be as well-known as they should be and deserve further study.

The letter $B$ for the classifying space functor from categories to spaces would sometimes be awkward in 
our context, since the classifying space functor will also be used to construct universal bundles rather 
than classifying spaces for bundles, hence we agree to 
write out $|N-|$ rather than $B$ whenever $B$ seems likely to confuse. 

This notation also displays a key technical problem that is sometimes overlooked
in the literature. The functor $|-|$ is a left adjoint and therefore preserves all colimits, such as passage to orbits in the equivariant setting. The functor $N$ is 
a right adjoint and it generally does not preserve colimits or passage to orbits, 
as we illustrate with elementary examples.  This problem is the subject of the paper \cite{Bab} by Babson and Kozlov. For topological categories, there is no discussion 
in the literature. Exceptionally, $N$ does commute with passage to orbits in the key examples that appear in equivariant bundle theory. Clear understanding of passage to orbits is essential to our calculations of fixed point spaces. 

\begin{rem} The functor $\Ch -)$ from $G$-categories
to $G$-categories plays a central role in our work.  Its $G$-fixed category
was introduced by Thomason \cite[(2.1)]{Thom}, who called it the lax limit of the 
action of $G$ on $\sC$ and denoted it by
$\sC\!at_G(\ul{EG},\ul C)$.  The relevance to equivariant bundle theory of 
the equivariant precursor $\Ch \sB)$  was first 
noticed by Shimakawa \cite{MS,Shim}. 
\end{rem}

\subsection*{Acknowledgements} The third author thanks Matthew Morrow and Liang Xiao 
for answers to her questions that pointed out the striking relevance to our
work of $H^1(G;\PI)$ and Serre's general version of Hilbert's Theorem 90.
We are grateful to an anonymous referee for a careful reading and suggestions
for improving the notations and exposition.

\section{Preliminaries on chaotic and translation categories}\label{Sec1}
The definitions we start with are familiar and elementary. However,
to keep track of categorical data and group actions later, we shall be pedantically precise.

\subsection{Preliminaries on topological $G$-categories}\label{prelim}
Let $\sC\!at$ be the category of categories and functors.  We may 
also view it as the $2$-category of categories, with $0$-cells, $1$-cells,
and $2$-cells the categories, functors, and natural transformations.  From
that point of view, $\sC\!at(\sA,\sB)$ is the internal hom category whose objects are 
the functors $\sA\rtarr \sB$ and whose morphisms are the natural transformations 
between them; they enrich $\sC\!at$ over itself.

For a group $G$, a $G$-category $\sA$ is a category with an action of $G$
specified by a homomorphism from $G$ to the automorphism group of $\sA$. 
Regarding $G$ as a groupoid with one object, the action is specified by a 
functor $G\rtarr \sC\!at$. We have the $2$-category $G\sC\!at$ of 
$G$-categories, $G$-functors, and $G$-natural transformations, where the 
latter notions are defined in the evident way: everything must be equivariant.

We may view $G\sC\!at$ as the underlying $2$-category of a category enriched 
over $G\sC\!at$. The $0$-cells are still $G$-categories, but now we have the 
$G$-category $\sC\!at(\sA,\sB)$ as the internal hom between them.  Its 
underlying category is $\sC\!at(\sA,\sB)$, and $G$ acts by conjugation on 
functors and natural transformations.  Thus, for $F\colon \sA\rtarr \sB$, 
$g\in G$, and $A$ either an object or a morphism of $\sA$, 
$(gF)(A) = gF(g^{-1}A)$.  Similarly, for a natural transformation 
$\et\colon E\rtarr F$ and an object $A$ of 
$\sA$, 
$$(g\et)_A = g\et_{g^{-1}A}\colon g E(g^{-1}A)\rtarr gF(g^{-1}A).$$
The category $G\sC\!at(\sA,\sB)$ is the same as the $G$-fixed category 
$\sC\!at(\sA,\sB)^G$, and we sometimes vary the choice of notation.

We can topologize the definitions so far, starting with the $2$-category of categories 
internal to the category $\sU$ of (compactly generated) spaces, together with continuous 
functors and continuous natural transformations.  Recall that a category $\sA$ internal to a cartesian monoidal category $\sV$ has object and morphism objects in $\sV$ and structure maps Source, Target, Identity and Composition in $\sV$.
These maps are denoted $S$, $T$, $I$, and $C$, and the usual category axioms must hold.
When $\sV = \sU$, we refer to internal categories as topological categories; we refer to them as topological $G$-categories when $\sV=G\sU$. These are more general than (small) topologically enriched categories, which have discrete sets of objects. We can now 
allow $G$ to be a topological group in the equivariant picture.  We continue to use the notations already given in the more general topological situation. 

\subsection{Chaotic topological $G$-categories}
\begin{defn}\mylabel{chacat} A small category $\sC$ is {\em chaotic} if there is exactly one morphism from $b$ to $a$ 
for each pair of objects $a$ and $b$. The unique morphism from $a$ to $b$ must then be inverse to the
unique morphism from $b$ to $a$. Thus $\sC$ is a groupoid, and its classifying space is contractible since every object 
is initial and terminal; in fact, it is the unique contractible
groupoid with the given object set. A topological category $\sC$ is 
chaotic if its underlying category is chaotic.  Its classifying space is again contractible (see \myref{simpcon}), but there are other topological
groupoids with the given object space and contractible classifying spaces. Similarly, a topological $G$-category is chaotic if its underlying category is chaotic.  It is then contractible but not usually 
$G$-contractible.
\end{defn}

The senior author remembers hearing the name ``chaotic'' long ago,
but we do not know its source.  The idea is that everything is the 
same as everything else, which does seem rather chaotic.\footnote{Some
category theorists suggest the name ``indiscrete category'', by formal analogy
with indiscrete spaces in topology. The key difference is that indiscrete spaces 
are of no interest, whereas we hope to convince the reader that chaotic categories
are of considerable interest.}

\begin{lem}\mylabel{chaochao} If $\sA$ is any category and $\sB$ is a chaotic category, then the category 
$\sC\!at(\sA,\sB)$ is again chaotic.
\end{lem}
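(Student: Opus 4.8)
The plan is to verify the defining property of a chaotic category directly: for any two functors $E, F\colon \sA \rtarr \sB$, there is exactly one natural transformation $E \rtarr F$. First I would check existence. A natural transformation $\et\colon E\rtarr F$ assigns to each object $a$ of $\sA$ a morphism $\et_a\colon E(a)\rtarr F(a)$ in $\sB$. Since $\sB$ is chaotic, there is exactly one such morphism $\io_{E(a),F(a)}$ for each $a$, so we are forced to set $\et_a = \io_{E(a),F(a)}$; this simultaneously shows uniqueness of the underlying data. It remains to check that this choice is natural, i.e., that for every morphism $f\colon a\rtarr a'$ in $\sA$ the square with sides $E(f)$, $F(f)$, $\et_a$, $\et_{a'}$ commutes. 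But both composites $F(f)\circ \et_a$ and $\et_{a'}\circ E(f)$ are morphisms $E(a)\rtarr F(a')$ in $\sB$, and in a chaotic category there is only one such morphism, so they agree. Hence $\et$ is a natural transformation, and it is the only one.

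I would then note the one subtlety needed to conclude that $\sC\!at(\sA,\sB)$ is a genuine (small) category with the chaotic property in the internal/topological sense, rather than merely as an abstract category: one should observe that the unique morphism from $E$ to $F$ just constructed is invertible, with inverse the unique natural transformation $F\rtarr E$, which is automatic since componentwise it is the inverse pair of morphisms in $\sB$ guaranteed by Definition \ref{chacat}. This makes $\sC\!at(\sA,\sB)$ a groupoid with exactly one morphism between any two objects, which is the definition of chaotic.

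I do not anticipate a serious obstacle here; the statement is essentially a formal unwinding of definitions, and the only thing to be careful about is the naturality-square verification, which collapses precisely because the target category is chaotic. If one wants the statement in the topological setting as well (which the surrounding discussion suggests is the intended level of generality), the same argument applies verbatim on underlying categories, since Definition \ref{chacat} declares a topological category chaotic exactly when its underlying category is; the topology on the object and morphism spaces of $\sC\!at(\sA,\sB)$ plays no role in the combinatorial claim. So the proof is short: construct the forced componentwise assignment, observe naturality is free, and observe invertibility is free.
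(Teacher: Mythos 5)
Your proof is correct and is essentially the same as the paper's, which simply defines the unique natural transformation componentwise by the unique morphism in $\sB$; you have merely spelled out the (trivial) naturality and invertibility checks that the paper leaves implicit.
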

\begin{proof} The unique natural map $\ze\colon E\rtarr F$ between 
functors $E,F\colon \sA\rtarr \sB$ is given on an object $A$ of $\sA$ by the
unique map $\ze_A\colon E(A)\rtarr F(A)$ in $\sB$.
\end{proof}

\begin{lem}\mylabel{chaochao2} If $\sA$ is any topological $G$-category and $\sB$ is a chaotic 
topological $G$-category, then the topological $G$-category $\sC\!at(\sA,\sB)$ 
and its $G$-fixed category $G\sC\!at(\sA,\sB)$ are again chaotic.
\end{lem}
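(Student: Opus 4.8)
The plan is to reduce the statement to Lemma \ref{chaochao} together with a one–line uniqueness argument. By Definition \ref{chacat} the word ``chaotic'' refers only to the underlying (discrete) category of a topological $G$-category, so the $G$-action and the topologies on $\sA$ and $\sB$ enter only at a single point: checking that the comparison natural transformation between two continuous functors is continuous. Everything else is formal.

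First I would treat $\sC\!at_G(\sA,\sB)$. Its underlying category is $\sC\!at(\sA,\sB)$, so, applying Lemma \ref{chaochao} to the underlying categories of $\sA$ and $\sB$ (the only hypothesis needed being that the underlying category of $\sB$ is chaotic), we get for any functors $E,F\colon \sA\rtarr\sB$ a unique natural transformation $\ze\colon E\rtarr F$, with $\ze_A\colon E(A)\rtarr F(A)$ the unique morphism of $\sB$ for each object $A$. The step to carry out here is that $\ze$ is continuous when $E$ and $F$ are: this is immediate from the explicit description of the morphism object of $\sC\!at(\sA,\sB)$, since $A\mapsto\ze_A$ is the composite of $(E,F)$ on objects with the structure maps of the chaotic topological category $\sB$. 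Hence $\sC\!at(\sA,\sB)$ is a chaotic topological category, and therefore so is the topological $G$-category $\sC\!at_G(\sA,\sB)$, which has the same underlying category.

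For the fixed category $\sC\!at(\sA,\sB)^G = G\sC\!at(\sA,\sB)$, whose objects and morphisms are the $G$-functors and $G$-natural transformations, I would argue as follows. Given $G$-functors $E,F\colon\sA\rtarr\sB$, let $\ze\colon E\rtarr F$ be the unique natural transformation supplied above. Each $g\in G$ acts as an automorphism of $\sC\!at_G(\sA,\sB)$, so $g\ze$ is again a natural transformation, now from $gE$ to $gF$; but $gE=E$ and $gF=F$ since $E$ and $F$ are $G$-fixed, so $g\ze\colon E\rtarr F$, whence $g\ze=\ze$ by uniqueness. Thus $\ze$ is a $G$-natural transformation and is visibly the unique one from $E$ to $F$; its inverse and the composition law are inherited from $\sC\!at(\sA,\sB)$, so $\sC\!at(\sA,\sB)^G$ is chaotic as well. (It may happen that $\sA$ and $\sB$ admit no $G$-functor between them, so that this category is empty; this is consistent with the conclusion, since the empty category is vacuously chaotic.) I do not expect a serious obstacle: the only point that is not purely formal is the continuity claim above, and the heart of the $G$-fixed statement is simply that equivariance of the comparison transformation is forced by the uniqueness clause in the definition of a chaotic category, the same mechanism already used in Lemma \ref{chaochao}.
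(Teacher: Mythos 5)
Your proposal is correct and follows essentially the same route as the paper: reduce to Lemma \ref{chaochao} for $\sC\!at_G(\sA,\sB)$, and observe that the unique natural transformation between $G$-functors is forced to be $G$-equivariant by uniqueness. You simply spell out the uniqueness argument ($g\ze = \ze$ since $gE=E$ and $gF=F$) and the continuity point that the paper leaves implicit.
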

\begin{proof} Since $\sC\!at(\sA,\sB)$ is just the category
$\sC\!at(\sA,\sB)$ with its conjugation action by $G$, \myref{chaochao} implies
the conclusion for $\sC\!at(\sA,\sB)$. The conclusion is inherited
by $G\sC\!at(\sA,\sB)=\sC\!at(\sA,\sB)^G$ since the unique natural transformation between 
$G$-functors $E$ and $F$ is necessarily a $G$-natural transformation. 
\end{proof}

\begin{defn}  The chaotic topological category $\tilde{X}$ generated by a 
space $X$ is the topological category with object space $X$ and morphism space 
$X\times X$.  The source, target, identity, and composition maps are defined by
$$ S = \pi_2\colon X \times X \rtarr X , \ \ T = \pi_1\colon X \times X \rtarr X , \ \ 
I = \DE\colon X \rtarr X \times X , \ \ \text{and}$$
$$ C = \id\times \epz\times \id \colon (X \times X )\times_X (X \times X ) 
\iso X \times X \times X \rtarr X \times X,$$
where $\epz\colon X\rtarr \ast$ is the trivial map.
On elements, $S(y,x) = x$, $T(y,x) = y$, $I(x) = (x,x)$, and $C(z,y,x) = (z,x)$.
Forgetting the topology, the element $(y,x)$ is the unique morphism 
$x\rtarr y$.  Reversing the order of source and target in the notation this way, 
so that $(z,y)\com (y,x) = (z,x)$, will turn out to be helpful later.

A map $f\colon X\rtarr Y$ induces the functor $\tilde f\colon \tilde X\rtarr \tilde Y$
given by $f$ on objects and $f\times f$ on morphisms.  When $X$ is a (left or right) $G$-space, 
we give $\tilde{X}$ the action specified by the given action on the object space $X$ and the 
diagonal action on the morphism space $X\times X$; $\tilde{X}$ is then a
chaotic topological $G$-category. Sending $X$ to $\tilde X$ specifies a functor 
from the category $G\sU$ of $G$-spaces to the category $G{\sG}\!pd$ of topological $G$-groupoids (a full 
subcategory of $G\sC\!at$).
\end{defn} 

\subsection{The adjunction between $G$-spaces and topological $G$-groupoids}

Sending a category to its set of objects restricts to an object functor $\sO\!b\colon G{\sG}\!pd\rtarr G\sU$.

\begin{lem}\mylabel{chaosadj} The chaotic category functor
is right adjoint to the object functor, so that 
\[ G\sC\!at(\sC,\tilde{X})= G\sG\!pd(\sC,\tilde{X})\iso G\sU(\sO\!b\sC,X)\] 
for a topological $G$-groupoid $\sC$ with object space $\sO\!b\sC$ and a topological $G$-space $X$. 
If $\sC$ is chaotic with object $G$-space $X$, then the unit of the adjunction is an isomorphism of topological $G$-groupoids 
$\et \colon \sC \rtarr \tilde X$.
\end{lem}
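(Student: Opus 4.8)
The plan is to establish the two assertions separately. For the adjunction, the key fact is that a continuous $G$-functor from a topological $G$-groupoid $\sG$ into a chaotic topological $G$-category is determined by, and may be prescribed freely by, its effect on objects. Accordingly I would take the candidate bijection $G\sG\!pd(\sG,\tilde X)\rtarr G\sU(\sO\!b\sG,X)$ to be restriction to objects. Injectivity is immediate from \myref{chacat}: two functors $F,F'\colon\sG\rtarr\tilde X$ that agree on objects must each carry a morphism $f$ of $\sG$ with source $a$ and target $b$ to the unique morphism of $\tilde X$ from $F(a)=F'(a)$ to $F(b)=F'(b)$, so $F=F'$. For surjectivity, given a $G$-map $\phi\colon\sO\!b\sG\rtarr X$, I would define the functor that is $\phi$ on objects and $(\phi\times\phi)\circ(T_\sG,S_\sG)$ on morphisms; the structure maps of $\tilde X$ force this to be a continuous functor — preservation of identities from $T_\sG\circ I_\sG=S_\sG\circ I_\sG=\mathrm{id}$, preservation of composition from the fact that the composition map of $\tilde X$ forgets the intermediate object — and it is $G$-equivariant since the diagonal $G$-action on the morphism space of $\tilde X$ is assembled from the equivariant maps $S_\sG$, $T_\sG$, and $\phi$. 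Naturality in $\sG$ and in $X$ is then formal.

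For the unit, take $\sG$ chaotic and write $X=\sO\!b\sG$. Unwinding the adjunction shows that $\et\colon\sG\rtarr\tilde X$ is the identity on objects and is given on morphisms by $(T_\sG,S_\sG)\colon\mathrm{Mor}\,\sG\rtarr X\times X$. Because $\sG$ is chaotic this is a continuous bijection — bijectivity being exactly the defining property in \myref{chacat} — so $\et$ is at least a continuous bijective $G$-functor, and it remains only to check that $(T_\sG,S_\sG)$ is a homeomorphism; equivalently, that the morphism space of a chaotic topological groupoid is $\sO\!b\sG\times\sO\!b\sG$ with its product topology. I would prove this by producing the inverse, which sends a pair $(b,a)$ to the unique morphism $a\rtarr b$, as a continuous map. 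The composition map $C_\sG$ is a quotient map, since it admits the continuous section carrying a morphism $f$ to the composable pair formed by $f$ and the identity at its source; it thus presents $\mathrm{Mor}\,\sG$ as the quotient of the space of composable pairs of $\sG$ obtained by forgetting the middle object. Since $(T_\sG,S_\sG)$ is continuous its target topology is at least the product topology, and one then checks, exploiting continuity of $I_\sG$ and $C_\sG$, that it cannot be strictly finer, so it equals it. The resulting inverse functor is automatically $G$-equivariant.

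The one real obstacle is this last point — that $(T_\sG,S_\sG)$ is a homeomorphism, not merely a continuous bijection; note that the discrete analogue is vacuous, since for discrete groupoids ``chaotic'' and ``$\tilde X$'' already coincide by \myref{chacat}. If one prefers to bypass the general point-set verification, it suffices to invoke it only where it is used: for the chaotic groupoids produced in \S\ref{Sec2} the morphism spaces are genuine product spaces by construction, so there $\et$ is manifestly an isomorphism.
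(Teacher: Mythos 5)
Your treatment of the adjunction itself coincides with the paper's: the paper also sends a $G$-map $f\colon\sO\!b\sG\rtarr X$ to the functor that is $f$ on objects and $(f\times f)\com(T,S)$ on morphisms, leaving the routine injectivity, functoriality, and naturality checks implicit. The divergence is entirely in the last statement, and there you have correctly isolated the one real issue --- but the route you sketch for resolving it cannot be completed. The paper disposes of the last statement in one sentence, as a rephrasing of the meaning of chaotic; that is, it reads ``chaotic topological groupoid with object space $X$'' as saying that $(T,S)$ exhibits the morphism space as $X\times X$ with the product topology, so that the unit is tautologically an isomorphism. You instead try to deduce this from the weaker literal definition in \myref{chacat} (only the underlying discrete category is required to be chaotic), asserting that continuity of $I$ and $C$ forces the topology on $\sM\!or\sG$ to be no finer than the product topology.

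That assertion is false, so no such check can succeed. Take $\sG$ to be the translation groupoid $\langle{\mathbb{R}_d,\mathbb{R}}\rangle$, where $\mathbb{R}_d$ is the additive group of real numbers with the \emph{discrete} topology acting by translation on the line $\mathbb{R}$ with its usual topology. The action is simply transitive, so the underlying category is chaotic on the object set $\mathbb{R}$; the maps $S$, $T$, $I$, $C$, and inversion are all continuous; yet the morphism space is $\mathbb{R}_d\times\mathbb{R}$, and $(T,S)(h,g)=(h+g,g)$ is a continuous bijection onto $\mathbb{R}\times\mathbb{R}$ that is not a homeomorphism. (Taking the ambient group to be trivial, this is a counterexample to the last statement under the literal reading of the definition.) So the ``one real obstacle'' you flag is genuine and cannot be overcome in the stated generality; your fallback is the correct resolution, and is in effect what the paper does: either build the requirement that $(T,S)$ be a homeomorphism into the definition of a chaotic topological groupoid, or verify it directly for the chaotic groupoids that actually occur --- for $\tilde X$ it holds by construction, and for $\sC\!at(\tilde X,\tilde\PI)$ the continuous inverse, sending a pair of functors $(F,E)$ to the natural transformation $x\mapsto (F(x),E(x))$, is supplied by the exponential law.
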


\begin{proof} Let $\sM\!or\sC$ be the morphism $G$-space of $\sC$. 
The functor $\sC\rtarr \tilde{X}$ determined by a continuous $G$-map 
$f\colon \sO\!b\sC\rtarr X$ is $f$ on object $G$-spaces and the composite 
$$ \xymatrix@1{ 
\sM\!or\sC\ar[r]^-{(T,S)} & \sO\!b\sC\times \sO\!b\sC\ar[r]^-{f\times f} & X\times X\\} $$
on morphism $G$-spaces. The last statement rephrases the meaning of chaotic. 
\end{proof}
  
\subsection{Translation categories and chaotic categories}

We use another simple definition to
relate chaotic categories to other familiar categories.
Let $G$ be a topological group and $Y$ be a left $G$-space.
Generalizing how we think of $G$ as a one object category,
we can think of $Y$ together with its action by $G$ as the 
functor $Y\colon G\rtarr \sU$ that sends the single object $\ast$ to $Y$ 
and is given on morphism spaces by the
map $G\rtarr \sU(Y,Y)$ adjoint to the action map $G\times Y\rtarr Y$.

\begin{defn}\mylabel{elt} Let $Y$ be a left $G$-space. Define the
translation category $T(G,Y)$ as follows. The object space is 
$Y$ and the morphism space is $G\times Y$. We think of $(g,y)$ as a
morphism $g\colon y\rtarr gy$. The map $I\colon Y\rtarr G \times Y$ sends $y$ to 
$(e,y)$. The maps $S$ and $T$ send $(g,y)$ to $y$ and $gy$, respectively.
The domain of composition, $(G \times Y)\times_{Y} (G \times Y)$, can be
identified with $(G\times G)\times Y$, and composition sends $(h,g,y)$
to $(hg,y)$. The construction is functorial in $Y$, for fixed $G$, and 
in the pair $(G,Y)$ in general.  If $Y$ has a right action by $G$ that
commutes with the left action, then $T(Y,G)$ is a right $G$-category
via the given right action on the object space $Y$ and on the second coordinate
of the morphism space $G\times Y$. 
\end{defn}

\begin{rem} The definition makes sense when $G$ is only a monoid, not 
necessarily a group.  When $Y$ is a point, $T(Y,G)$ is $G$ regarded
as a one object category. When $G$ is a group, $T(Y,G)$ is the 
standard groupoid associated to a $G$-space, but it is not generally 
chaotic.   
\end{rem}

\begin{prop}\mylabel{silly} For left $G$-spaces $Y$, there is a natural comparison
functor $\mu\colon T(G,Y) \rtarr \tilde{Y}$.  If $Y$ has a right action that
commutes with its left action, then $\mu$ is a map of right $G$-categories. The
functor $\mu\colon T(G,G) \rtarr \tilde{G}$ is an isomorphism of right $G$-categories.
\end{prop}  
\begin{proof}
Define $\mu$ to be the identity map on object spaces and the map
that sends $(g,y)$ to $(gy,y)$ on morphism spaces. Since $\tilde{Y}$ is
chaotic, this is the unique functor that is the identity on objects, and
it is easy to check equivariance when $Y$ has a right $G$-action.  When $Y=G$ with 
left action and right action
given by its product, $\mu$ is an isomorphism with $\mu^{-1}(h,g) = (hg^{-1},g)$ 
on morphism spaces.
\end{proof}

In view of the differing group actions on the morphism spaces $G\times G$, action 
on the right coordinate in $T(G,G)$ and diagonal action in $\tilde{G}$, the 
isomorphism between $T(G,G)$ and $\tilde{G}$ must not be viewed as a tautology.

\begin{rem} When we return to the split extension (\ref{split}), the 
group $\PI$ there will play a role close to that of the group denoted $G$ 
in \myref{elt} and \myref{silly}. When $G=e$, we would then specialize 
to $Y=\PI$ with its natural left $\PI$ action and see the usual universal
principal $\PI$-bundle.  When $G\neq e$, the relevant specialization is a little
less obvious; see \myref{silly2}, which is a follow up of \myref{silly}.  
\end{rem}

\section{The category $\sC\!at(\tilde{X},\PI)$}\label{Sec2}
We let $X$ be a space and $\PI$ be a topological group in this section. 
We regard $\PI$ as a category with one object without change of notation; it should be clear from the 
context when we mean the group $\PI$ and when we mean the category $\PI$.
From now on, functors and natural transformations are to be continuous 
(in the topological sense), even when we neglect to say so.
We are especially interested in the functor categories 
$\sC\!at(\tilde{X},\tilde{\PI})$, which are chaotic by \myref{chaochao}, 
and in the functor categories $\sC\!at(\tilde{X},\PI)$, which are not. 
The right action of $\PI$ on $\tilde{\PI}$ induces a right action of $\PI$ on
$\sC\!at(\tilde{X},\tilde{\PI})$.

This section and the next give a pedantically explicit description of $\sC\!at(\tilde{X},\PI)$ 
and of the induced map 
$$\sC\!at(\tilde{X},\tilde{\PI})\rtarr \sC\!at(\tilde{X},\PI),$$
showing in particular that it is obtained by passage to orbits over $\PI$.  When $X=G$, this 
elementary analysis will
be at the heart of all our proofs.  We defer adding in the second group $G$ 
that appears in the bundle theory until after we have this description in place 
since a group defined solely in terms of the diagonal on $X$ and the product on 
$\PI$ plays a central role in the description. 

\subsection{An explicit description of $\sC\!at(\tilde{X},\PI)$}\label{explicit}
By the adjunction given in \myref{chaosadj} (with $G=e$),  the object space of the chaotic 
category $\sC\!at(\tilde{X},\tilde{\PI})$ can be identified with the space $\sU(X,\PI)$ of maps 
$X\rtarr \PI$ with its standard (compactly generated) function space topology.  Therefore
$\sC\!at(\tilde{X},\tilde{\PI})$ can be identified with the chaotic category $\widetilde{\sU(X,\PI)}$.

\begin{defn} Define the pointwise product $\ast$ on $\sU(X,\PI)$ by 
\[(\al\ast \be)(x) = \al(x)\be(x) \]
for $\al,\be\colon X\rtarr \PI$. The unit element $\varepsilon$ is given by 
$\varepsilon(x) = e$ and inverses are given by $\al^{-1}(x) = \al(x)^{-1}$.
The topological group $\sU(X,\PI)$ contains $\PI$ as a (closed) subgroup, where 
we regard an element $\si\in\PI$ as the constant map $\si\colon X\rtarr \PI$ at $\si$. 
The inclusion of $\PI$ in $\sU(X,\PI)$ and composition give $\sU(X,\PI)$ its right $\PI$-action.
\end{defn}

\begin{defn} Choose a basepoint $\bp\in X$.  There is a unique representative 
map $\al$ such that $\al(\bp)=e$ in each orbit of $\sU(X,\PI)$ under the right 
action by $\PI$. Let $\sO(X,\PI)\subset \sU(X,\PI)$ denote the subspace of such 
representative maps. It is a subgroup of $\sU(X,\PI)$.  The $\PI$-action 
and the product $\ast$ on $\sU(X,\PI)$ are related by $\al\si = \al\ast\si$
for $\si\in \PI$, and $\ast$ restricts to a homeomorphism
of $\PI$-spaces $\sO(X,\PI)\times \PI\rtarr \sU(X,\PI).$
Write elements of $\sU(X,\PI)$ in the form $\al\si$, where $\al(\bp)=e$. 
Passage to orbits restricts to a homeomorphism  $\sO(X,\PI)\iso \sU(X,\PI)/\PI$.
Observe that the product $\ast$ on $\sU(X,\PI)$ induces a left action of 
$\sU(X,\PI)$ on $\sO(X,\PI)$ by sending $(\be,\al)$ to the orbit representative
of $\be\ast \al$. 
\end{defn}

The proofs of the follow three lemmas are simple exercises from the fact that there is a 
unique morphism $(y,x)$ from $x$ to $y$ in $\tilde{X}$; compare \myref{chaochao}.

\begin{lem}\mylabel{Ob(X,P)} A functor $E\colon \tilde X\rtarr \PI$
is given by the trivial map $X\rtarr\ast$ of object spaces and a map 
$E\colon X\times X\rtarr \PI$ of morphism spaces 
such that $E(x,x)=e$ and $E(z,y)E(y,x) = E(z,x)$. Define $\al\in \sO(X,\PI)$
by $\al(x) =E(x,\bp)$.  Then $\al$ determines $E$ by the formula
$$E(y,x) = E(y,\bp)E(\bp,x) = \al(y)\al(x)^{-1}.$$
Writing $E=E_{\al}$, sending $E_{\al}$ to $\al$ specifies a homeomorphism from
the space of functors $\tilde X\rtarr \PI$ to $\sO(X,\PI)$.
\end{lem}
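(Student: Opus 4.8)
The plan is to unwind the definition of a functor $E\colon \tilde X\rtarr \mathbf{\PI}$ directly from the internal-category structure maps on $\tilde X$ and $\mathbf{\PI}$, and then to observe that all the data is pinned down by the single map $\al(x)=E(x,\bp)$.

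First I would record what a continuous functor $\tilde X\rtarr\mathbf{\PI}$ consists of: since $\mathbf{\PI}$ has a single object, the map on object spaces is forced to be the trivial map $X\rtarr\ast$, and the content is the map on morphism spaces, a continuous map $E\colon X\times X\rtarr\PI$. Compatibility with the identity maps $I$ says $E\circ\DE=\varepsilon$, i.e.\ $E(x,x)=e$; compatibility with composition $C$ says $E(z,y)E(y,x)=E(z,x)$ for all $x,y,z$ (using our convention that $(y,x)$ is the morphism $x\rtarr y$, so that composable pairs are $(z,y)$ then $(y,x)$). Compatibility with source and target is automatic because $\mathbf{\PI}$ has only one object. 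So the space of functors $\tilde X\rtarr\mathbf{\PI}$ is exactly the subspace of $\sU(X\times X,\PI)$ cut out by these two equations, topologized as a subspace of a function space.

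Next I would define $\al(x)=E(x,\bp)$. Then $\al\colon X\rtarr\PI$ is continuous (it is $E$ composed with $x\mapsto(x,\bp)$), and $\al(\bp)=E(\bp,\bp)=e$, so $\al\in\sO(X,\PI)$. The cocycle identity with $y=\bp$ gives $E(z,\bp)E(\bp,x)=E(z,x)$; combined with $E(\bp,x)E(x,\bp)=E(\bp,\bp)=e$, which yields $E(\bp,x)=\al(x)^{-1}$, we get $E(z,x)=\al(z)\al(x)^{-1}$. Conversely, for any $\al\in\sO(X,\PI)$ the formula $E_\al(y,x)=\al(y)\al(x)^{-1}$ visibly satisfies $E_\al(x,x)=e$ and the cocycle identity, and depends continuously on $\al$ (in the function-space topology, using that multiplication and inversion in $\PI$ are continuous). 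These two assignments $E\mapsto\al$ and $\al\mapsto E_\al$ are mutually inverse: one composite is the formula just derived, the other is the obvious substitution. Both are continuous for the compactly generated function-space topologies — $E\mapsto\al$ is restriction along $X\rtarr X\times X$, $x\mapsto(x,\bp)$, and $\al\mapsto E_\al$ is induced by the continuous map $\PI\times\PI\rtarr\PI$, $(a,b)\mapsto ab^{-1}$, precomposed with $X\times X\rtarr X\times X$ via $\al\times\al$ — so the bijection is a homeomorphism.

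The only mildly delicate point, and the step I would be most careful about, is the topological claim that the displayed bijection is a homeomorphism of spaces, not merely a bijection of underlying sets: this requires knowing that the function-space topology on $\sU(X,\PI)$ and on $\sU(X\times X,\PI)$ behaves well under the relevant adjunctions and under composition with fixed continuous maps, which is exactly where working in the category $\sU$ of compactly generated spaces is used. Everything on the level of underlying sets is the routine unwinding indicated by the remark preceding the lemma (uniqueness of the morphism $(y,x)$ in $\tilde X$), so I would state the set-level verification briefly and spend a sentence noting the cartesian-closedness of $\sU$ to justify continuity of the inverse. This also sets up the companion computations in the next two lemmas, where the natural transformations are treated the same way.
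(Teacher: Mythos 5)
Your proof is correct and is exactly the routine unwinding the paper intends: the paper gives no written proof, stating only that these three lemmas are ``simple exercises from the fact that there is a unique morphism $(y,x)$ from $x$ to $y$ in $\tilde{X}$,'' and your verification (identity and composition constraints force $E(x,x)=e$ and the cocycle identity, $E(\bp,x)=\al(x)^{-1}$ recovers $E$ from $\al$, and continuity both ways follows from cartesian closedness of compactly generated spaces) is precisely that exercise carried out. No discrepancies.
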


\begin{lem}\mylabel{Mor(X,P)} For $E_{\al},E_{\be}\colon \tilde{X}\rtarr \PI$, 
a natural transformation $\et\colon E_{\al}\rtarr E_{\be}$ is given by a map
$\et\colon X\rtarr \PI$ such that $\et(y)E_{\al}(y,x) = E_{\be}(y,x)\et(x)$
for $x,y\in X$.  If $\si\in \PI$ is defined by $\si = \et(\bp)$, then the pair
$(\be\si,\al)$ determines $\et$ by the formula
$$\et(x) = E_{\be}(x,x_0)\et(\bp) E_{\al}(x,\bp)^{-1} = (\be\si\ast \al^{-1})(x).$$
Writing $\et = \et_{\si}$, sending 
$\eta_{\si}$ to $(\be\si,\al)$ specifies a homeomorphism from the space 
of morphisms of $\sC\!at(\tilde{X},\PI)$ to the space $\sU(X,\PI)\times \sO(X,\PI)$.
\end{lem}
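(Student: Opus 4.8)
The plan is to unwind the definition of a natural transformation in $\sC\!at(\tilde X,\mathbf{\PI})$ using the already-established description of objects (Lemma~\ref{Ob(X,P)}). First I would record what the data of a natural transformation $\et\colon E_{\al}\rtarr E_{\be}$ amounts to: since $\mathbf{\PI}$ has one object, $\et$ is the same as a map $\et\colon X\rtarr \PI$ assigning to each object $x$ of $\tilde X$ a morphism $\et(x)\colon\ast\rtarr\ast$ in $\mathbf{\PI}$, and naturality with respect to the unique morphism $(y,x)\colon x\rtarr y$ of $\tilde X$ is exactly the stated equation $\et(y)E_{\al}(y,x)=E_{\be}(y,x)\et(x)$. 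Continuity of $\et$ as a natural transformation translates to continuity of the map $X\rtarr\PI$.

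Next I would pin down $\et$ in terms of a single element $\si\in\PI$ together with the pair $(\al,\be)$. Setting $x=\bp$ in the naturality equation and abbreviating $\si=\et(\bp)$ gives $\et(y)E_{\al}(y,\bp)=E_{\be}(y,\bp)\si$, i.e. $\et(y)\al(y)=\be(y)\si$, so that
\[\et(y)=\be(y)\,\si\,\al(y)^{-1}=(\be\si\ast\al^{-1})(y),\]
which is the asserted formula (here $\be\si$ means the pointwise product of $\be$ with the constant map $\si$, an element of $\sU(X,\PI)$, and $\al^{-1}$ its pointwise inverse). Conversely I would check that for \emph{any} $\si\in\PI$ the map $\et_{\si}:=\be\si\ast\al^{-1}$ does satisfy the full naturality equation for all $x,y$: using $E_{\al}(y,x)=\al(y)\al(x)^{-1}$ and $E_{\be}(y,x)=\be(y)\be(x)^{-1}$ from Lemma~\ref{Ob(X,P)}, both sides collapse to $\be(y)\si\al(x)^{-1}$. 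So the assignment $\si\mapsto\et_{\si}$ is a bijection between $\PI$ and the set of natural transformations $E_{\al}\rtarr E_{\be}$ — unsurprising, since $\sC\!at(\tilde X,\tilde\PI)$ is chaotic and the morphisms of $\sC\!at(\tilde X,\mathbf{\PI})$ over a fixed pair of objects form a torsor of that shape.

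Finally I would assemble the global statement. A morphism of $\sC\!at(\tilde X,\mathbf{\PI})$ is a triple consisting of a source object $\al\in\sO(X,\PI)$, a target object $\be\in\sO(X,\PI)$, and a natural transformation, which by the above is determined by $\si\in\PI$; so the morphism space is homeomorphic to $\sO(X,\PI)\times\PI\times\sO(X,\PI)$ with $S$ and $T$ the evident projections. To match the claimed description, I reparametrize by replacing the pair $(\be,\si)$ with the single element $\be\si\in\sU(X,\PI)$: the map $\sO(X,\PI)\times\PI\rtarr\sU(X,\PI)$ sending $(\be,\si)$ to $\be\si=\be\ast\si$ is a homeomorphism by the preceding definition (where $\ast$ restricts to a homeomorphism $\sO(X,\PI)\times\PI\rtarr\sU(X,\PI)$), and it is the identity on orbit representatives in the first coordinate. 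Composing, $\et_{\si}\mapsto(\be\si,\al)$ is a homeomorphism from the morphism space to $\sU(X,\PI)\times\sO(X,\PI)$.

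The only genuinely delicate point is the topological bookkeeping: one must confirm that each set-level bijection above is a homeomorphism for the compactly generated function-space topologies, and that the internal-hom topology on the morphism space of $\sC\!at(\tilde X,\mathbf{\PI})$ really is the subspace/product topology making these maps continuous with continuous inverses. This is where the standing hypothesis that we work in $\sU$ (compactly generated spaces), so that exponential laws and adjunctions behave, does the work; modulo that, the homeomorphism claim follows formally from Lemma~\ref{chaosadj} and Lemma~\ref{Ob(X,P)} together with the continuity of $\ast$ and of inversion in $\sU(X,\PI)$. Everything else is the routine algebraic manipulation sketched above, exactly the ``simple exercise'' the surrounding remark promises.
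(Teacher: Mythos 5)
Your proposal is correct and is precisely the "simple exercise" the paper alludes to in lieu of a written proof: unwind naturality against the unique morphism $(y,x)$, specialize to $x=\bp$ to extract $\si=\et(\bp)$, verify the converse using $E_{\al}(y,x)=\al(y)\al(x)^{-1}$, and reparametrize via the homeomorphism $\sO(X,\PI)\times\PI\iso\sU(X,\PI)$. No discrepancy with the paper's (omitted) argument.
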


\begin{lem}\mylabel{Com(X,P)} Identify the object and morphism spaces of 
$\sC\!at(\tilde{X},\PI)$ with 
$$\sO(X,\PI) \ \ \text{and}\ \  \sM(X,\PI)\equiv \sU(X,\PI)\times \sO(X,\PI)  $$
via the homeomorphisms of the previous two lemmas.
Then the identity map $I$ sends $\al$ to $(\al e,\al)$ and the source and target maps 
$S$ and $T$ send $(\be\si, \al)$ to $\al$ and to $\be$. The $S=T$ pullback 
\[ \sM(X,\PI)\times_{\sO(X,\PI)}\sM(X,\PI)  \]
can be identified with $\sU(X,\PI)\times\sU(X,\PI) \times \sO(X,\PI)$ via
\[ ((\ga\ta,\be),(\be\si,\al)) \leftrightarrow (\ga\ta,\be\si,\al) \]
and the composition map $C$ sends 
$(\ga\ta,\be\si,\al)$ to $(\ga\ta\si,\al)$. 
\end{lem}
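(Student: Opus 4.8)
The plan is to translate the standard descriptions of the structure maps of a functor category into the coordinates supplied by the previous two lemmas. Recall from \myref{Ob(X,P)} that an object of $\sC\!at(\tilde X,\mathbf{\PI})$ is a functor $E_\al$, identified with $\al\in\sO(X,\PI)$, and from \myref{Mor(X,P)} that a morphism is a natural transformation $\et_\si\colon E_\al\rtarr E_\be$, identified with $(\be\si,\al)\in\sU(X,\PI)\times\sO(X,\PI)$, where $\si=\et_\si(\bp)\in\PI$ and $\et_\si(x)=(\be\si\ast\al^{-1})(x)$. I would then dispose of $I$, $S$, $T$, the $S=T$ pullback, and $C$ one at a time, each being a direct unwinding of definitions.

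For $I$: the identity natural transformation of $E_\al$ has all components $e\in\PI$ and both source and target $E_\al$, so under \myref{Mor(X,P)} it is $(\al e,\al)$, where $\al e=\al\ast\varepsilon=\al$ written in the normalized form ($\be=\al$, $\si=e$). For $S$ and $T$: a morphism identified with $(\be\si,\al)$ is by construction a natural transformation $E_\al\rtarr E_\be$, so $S$ and $T$ read off its source $\al$ and its target $\be$; here $\be$ is recovered from the $\sU(X,\PI)$‑coordinate $\be\si$ as its orbit representative, since $(\be\si)(\bp)=\si$. For the pullback: by the descriptions of $S$ and $T$, a composable pair has the shape $((\ga\ta,\be),(\be\si,\al))$ with a single common $\be\in\sO(X,\PI)$; since that $\be$ is forced by the coordinate $\be\si$ (it is its orbit representative), the pair determines and is determined by the triple $(\ga\ta,\be\si,\al)$, whence $\sM(X,\PI)\times_{\sO(X,\PI)}\sM(X,\PI)\iso\sU(X,\PI)\times\sU(X,\PI)\times\sO(X,\PI)$. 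For $C$: vertical composition of $\et_\si\colon E_\al\rtarr E_\be$ and $\et_\ta\colon E_\be\rtarr E_\ga$ is computed pointwise using the group multiplication in $\mathbf{\PI}$, and since $\si$ and $\ta$ are constant maps the product telescopes,
\[ \et_\ta(x)\et_\si(x)=\ga(x)\ta\be(x)^{-1}\be(x)\si\al(x)^{-1}=\ga(x)(\ta\si)\al(x)^{-1}=(\ga\ta\si\ast\al^{-1})(x), \]
so the composite $E_\al\rtarr E_\ga$ has value $\ta\si$ at $\bp$ and is identified with $(\ga\ta\si,\al)$ by \myref{Mor(X,P)}.

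Continuity of each of these maps, and of the inverse of the pullback identification, is immediate: everything is assembled from the continuous operations $\ast$, evaluation at $\bp$, passage to orbit representatives, and coordinate projections, together with the homeomorphism $\sO(X,\PI)\times\PI\iso\sU(X,\PI)$ of the preceding definition. There is no real obstacle; the only steps that want a moment's attention are the pullback identification — one must notice that the apparently extra datum $\be$ in a composable pair is determined by the $\sU(X,\PI)$‑coordinate $\be\si$, so that the pullback is genuinely a product of two copies of $\sU(X,\PI)$ with one copy of $\sO(X,\PI)$ rather than carrying a spurious $\PI$‑factor — and the computation of $C$, where one must use the correct order of multiplication in $\mathbf{\PI}$ so that the cancellation $\be(x)^{-1}\be(x)=e$ takes place.
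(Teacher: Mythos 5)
Your proposal is correct and follows essentially the same route as the paper: the paper's proof consists of exactly your pointwise computation of the composite $\et_\ta\ast\et_\si$, with the middle factors cancelling to give $(\ga\ta\si,\al)$, and it leaves the verifications for $I$, $S$, $T$, and the pullback as the ``simple exercises'' that you spell out. Your version is in fact slightly more careful, both in writing the cancellation consistently with \myref{Mor(X,P)} and in noting that the common object $\be$ in a composable pair is determined by the coordinate $\be\si$.
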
 
\begin{proof}  If we compose
$\et_{\ta}\colon E_{\be}\rtarr E_{\ga}$ with 
$\et_{\si}\colon E_{\al} \rtarr E_{\be}$, we obtain
\[ \et_{\ta}\ast \et_{\si} 
= \ga^{-1}\ta \ast \be \ast \be^{-1}\si \ast \al
= \ga^{-1}\ta\si\ast\al, \]
which corresponds to the given description.
\end{proof}

\subsection{Two identifications of $\sC\!at(\tilde X,\PI)$}

We show here that \myref{silly} leads to one identification of 
$\sC\!at(\tilde X,\PI)$, and
the lemmas of the previous section lead to a closely related one.  
These elementary identifications commute passage to orbits with 
the functor $\sC\!at(\tilde X,-)$, and that will be crucial to understanding
$B\sC\!at(\tilde G,\PI)$ as an equivariant classifying space.

\begin{notn}\mylabel{psqs} The category $\PI$ is isomorphic to the orbit 
category $\tilde{\PI}/\PI$. The quotient functor 
$p\colon \tilde{\PI}\rtarr \PI$ 
is the trivial map $\PI\rtarr \ast$ on object spaces and is given on 
morphism spaces by the map 
$\xymatrix@1{p\colon \PI\times \PI \rtarr (\PI\times \PI)/\PI \iso \PI\\}$
specified by $p(\ta,\si) = \ta\si^{-1}$. 
Let $q$ denote the functor 
\[ \sC\!at(\id,p) \colon  \sC\!at(\tilde X,\tilde \PI)\rtarr  
\sC\!at(\tilde X,\PI). \\ \]
We also let $q$ denote the functor between translation categories
\[ T(\sU(X,\PI),\sU(X,\PI)) \rtarr T(\sU(X,\PI),\sO(X,\PI)) \]
that is induced by the quotient map $p\colon \sU(X,\PI)\rtarr \sU(X,\PI)/\PI\iso \sO(X,\PI)$.
\end{notn}

\begin{thm}\mylabel{notformal} 
There is a commutative diagram of topological categories in which $\mu$, $\nu$, and
$\xi$ are isomorphisms.
\[ \xymatrix{
T(\sU(X,\PI),\sU(X,\PI)) \ar[rr]^-{\mu} \ar[d]_q 
& &\sC\!at(\tilde X,\tilde \PI) 
\ar[dl]_{p} \ar[d]^{q} \\
T(\sU(X,\PI),\sO(X,\PI)) \ar[r]_-{\nu} 
& \sC\!at(\tilde X,\tilde \PI)/\PI
\ar[r]_{\xi} & \sC\!at(\tilde X,\PI) \\ } \] 
\end{thm}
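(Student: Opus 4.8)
The plan is to assemble the diagram from pieces we have already built, identifying each arrow explicitly on object and morphism spaces and then checking commutativity by inspection. I would begin with the top arrow $\mu$. By \myref{chaochao} applied to $\sA=\tilde X$ and the chaotic category $\sB = \tilde\PI$, the category $\sC\!at(\tilde X,\tilde\PI)$ is chaotic, and by \myref{chaosadj} (with $G=e$) its object space is $\sU(X,\PI)$ while its morphism space is $\sU(X,\PI)\times\sU(X,\PI)$ with $(T,S)$ the two projections. On the other hand $\langle\sU(X,\PI),\sU(X,\PI)\rangle$ has object space $\sU(X,\PI)$ and morphism space $\sU(X,\PI)\times\sU(X,\PI)$ as well, so $\mu$ is precisely the isomorphism of \myref{silly} with $G$ replaced by the topological group $\sU(X,\PI)$: it is the identity on objects and the homeomorphism $\mu(h,g)=(hg,g)$ on morphisms, an isomorphism of categories by the same bookkeeping of $S$ and $T$ as in the proof of \myref{silly}.

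Next I would handle the bottom left arrow $\nu$ and the right-hand arrow $q$. The functor $q$ on the right is $\sC\!at(\id,p)$ for the quotient functor $p\colon\tilde\PI\rtarr\mathbf\PI$ of \myref{psqs}; since $\mathbf\PI\iso\tilde\PI/\PI$ and realization of the relevant orbit is already recorded, $q$ factors canonically as $\xi\circ p$ where $p\colon\sC\!at(\tilde X,\tilde\PI)\rtarr\sC\!at(\tilde X,\tilde\PI)/\PI$ is the orbit quotient and $\xi$ is the comparison map to $\sC\!at(\tilde X,\mathbf\PI)$. That $\xi$ is an isomorphism is exactly the assertion that the nerve/orbit interchange problem flagged in the introduction does \emph{not} obstruct us here: concretely, one computes the orbit of the object space $\sU(X,\PI)$ under the right $\PI$-action to be $\sO(X,\PI)$ (by the chosen-basepoint normalization) and the orbit of the morphism space $\sU(X,\PI)\times\sU(X,\PI)$ to be $\sU(X,\PI)\times\sO(X,\PI)$, and then checks these agree with the explicit object and morphism spaces of $\sC\!at(\tilde X,\mathbf\PI)$ supplied by \myref{Ob(X,P)}, \myref{Mor(X,P)}, and \myref{Com(X,P)}, compatibly with $S$, $T$, $I$, $C$. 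The map $\nu$ is then the induced isomorphism of translation categories under $\mu$, i.e. $\nu$ is the unique arrow making the left square commute; because $q$ on the left is $\langle\id,p\rangle$ induced by the \emph{same} quotient $p\colon\sU(X,\PI)\rtarr\sO(X,\PI)$, and $\mu$ is an isomorphism intertwining the two $\PI$-actions by construction, $\nu$ inherits being an isomorphism, and $\nu$ is again the identity on objects with $\nu(h,\bar g)=(\overline{hg},\bar g)$ on morphisms.

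Finally, commutativity. The triangle on the right, $q = \xi\circ p$ (i.e. $p$ followed by $\xi$ equals $\sC\!at(\id,p)$), is the definition of the factorization just described, so there is nothing to prove once $\xi$ is named. The left square, $\nu\circ q = p\circ\mu$ as functors $\langle\sU(X,\PI),\sU(X,\PI)\rangle\rtarr\sC\!at(\tilde X,\tilde\PI)/\PI$, is the naturality of $\mu$ in the group argument with respect to the quotient homomorphism, checked on objects (both send $g$ to the class of $g$, i.e. to the corresponding $E_\al$) and on morphisms (both send $(h,g)$ to the class of $(hg,g)$) using the explicit formulas from \myref{silly} and \myref{psqs}. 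I expect the main obstacle to be the identification that $\xi$ is an isomorphism — that is, verifying that $N$ commutes with passage to $\PI$-orbits for these particular categories, which requires matching the orbit spaces of the nerve against the explicit description of $\sC\!at(\tilde X,\mathbf\PI)$ from \myref{Com(X,P)} rather than following from any formal adjunction; the remaining isomorphisms and the commutativity are then routine once the object/morphism-space bookkeeping of \myref{chaosadj}, \myref{silly}, and Lemmas \ref{Ob(X,P)}--\ref{Com(X,P)} is lined up.
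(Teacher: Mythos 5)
Your proposal matches the paper's proof: $\mu$ comes from \myref{silly} applied to the group $\sU(X,\PI)$, $\xi$ is the factorization of $q=\sC\!at(\id,p)$ through the $\PI$-orbit category with $\nu$ induced by $\mu$, and the only substantive step --- that $\xi$ is an isomorphism --- is verified exactly as you propose, by matching the $\PI$-orbits of the object and morphism spaces of $\sC\!at(\tilde X,\tilde\PI)$ against the explicit description of $\sC\!at(\tilde X,\mathbf{\PI})$ in Lemmas \ref{Ob(X,P)}--\ref{Com(X,P)}. One small correction of framing: the issue at this stage is whether the internal-hom functor $\sC\!at(\tilde X,-)$ commutes with passage to $\PI$-orbits, not whether $N$ does; the nerve/orbit interchange enters only later, in \S\ref{Secnerve}, when passing to classifying spaces.
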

\begin{proof} The map $p$ is the quotient map given by passage to orbits 
over $\PI$. Since $q$ on the right is a $\PI$-map with $\PI$ acting trivially on $\sC\!at(\tilde X,\PI)$, 
$q$ factors through a map $\xi$ that makes the triangle commute.  Since 
$\sC\!at(\tilde X,\tilde \PI)$ is the chaotic category whose object space
is the topological group $\sU(X,\PI)$, \myref{silly} gives the isomorphism
$\mu$. Since $q$ on the left is obtained by passage to orbits from the relevant
action of $\PI$, it is clear that $\mu$ induces an isomorphism $\nu$ making
the left trapezoid commute.  

All that remains is to prove that $\xi$
is an isomorphism, and that follows from the results of \S\ref{explicit}.
For a functor $E_{\al}\colon \tilde{X}\rtarr \PI$, $\al\colon X\rtarr \PI$
and $\al\times \al\colon X\times X\rtarr\PI\times \PI$ 
define the object and morphism maps of a functor $F\colon \tilde{X}\rtarr \tilde{\PI}$. The functoriality
properties of $E_{\al}$ show that $p\com F = E_{\al}$, so that $q$ is surjective on objects. If we also have $p\com F' = E_{\al}$, then a quick check shows that $F(x)^{-1}F'(x) = F(y)^{-1}F'(y)$ for all $x,y\in X$.  If the common value is denoted by
$\si$, then $F'(x) = F(x)\si$ for all $x$.  In view of the specification of $p$ and
$q$ in
\myref{psqs}, this implies that $\xi$ is a homeomorphism on object spaces.

Now let $E_{\al},E_{\be}\colon \tilde{X}\rtarr \PI$ be any two functors. 
For any choices of functors $F,F'\colon \tilde{X} \rtarr \tilde{\PI}$ such that 
$q\com F = E_{\al}$ and $q\com F' = E_{\be}$, define $\ze\colon X\rtarr \PI\times \PI$  by $\ze(x) = (F(x),F'(x))$. Then $\ze$ is a map from the object space of $\tilde{X}$ to the morphism space of $\tilde{\PI}$. A quick check shows that $\ze$ is a natural transformation $F\rtarr F'$ such that $\et = q\com \ze$ is a natural transformation 
$E_{\al}\rtarr E_{\be}$ with $\et_{\bp} = F'(\bp)F(\bp)^{-1}$.  Via our enumeration
of the possible choices, this implies that $q$ restricted to the inverse image of the
space of natural transformations $E_{\al}\rtarr E_{\be}$ can be identified with the
quotient map $p\colon \PI\times \PI\rtarr \PI$ of \myref{psqs}.  It follows that 
$\xi$ is a homeomorphism on morphism spaces.
\end{proof}

\subsection{The nerve functor and classifying spaces}\label{Secnerve}

We recall the definition of the nerve functor $N$ in more detail than might be thought
warranted at this late date since, in the presence of the left-right action dichotomy 
of multiple group actions, the original definitions in category theory can cause real 
problems arising from categorical dyslexia. There are two standard conventions in the
literature, and we must choose.  Let $\sC$ be a topological category with 
object space $\sO$ and morphism space $\sM$. Then $N_0\sC =\sO$ and, for $q>0$,
\[ N_q\sC = \sM \times_{\sO} \cdots \times_{\sO} \sM, \] 
with $q$ factors $\sM$.  The pullbacks are over pairs of maps $(S,T)$. 
To avoid dyslexia, we remember that $g\com f$ means first $f$ and then $g$,
and choose to forget the picture
\[ \xymatrix@1{ \bullet \ar[r]^-{f_1} & \bullet \ar[r]^-{f_2} & \bullet \ar[r] & \cdots\ar[r] 
& \bullet \ar[r]^{f_{q-1}} & \bullet \ar[r]^{f_q} & \bullet \\} \]
of $q$ composable arrows and instead remember that the picture
\begin{equation}\label{dyslexia}
 \xymatrix@1{ x_0  & x_1 \ar[l]_-{f_{1}} & x_2 \ar[l]_-{f_{2}} & \ar[l] \cdots & x_{q-2} \ar[l] 
& x_{q-1} \ar[l]_-{f_{q-1}}    & x_q \ar[l]_-{f_q}  \\}
\end{equation}
corresponds to an element $[f_1,\cdots,f_q]$ of $N_q\sC$, so that $S(f_i)=T(f_{i+1})$.  For $x\in \sO$, we write $\id = I(x)$ generically. 
Then
\[ d_0[f] = T(f), \ \ d_1[f] = S(f), \ \ \text{and} \ \  s_0(x) = [\id_x]. \]
For $q\geq 2$, 
\[ d_i[f_1,\cdots,f_q] =\left\{
\begin{array}{ll}
[f_2,\cdots,f_q] & \mbox{if $i=0$}\\

[f_1,\cdots,f_{i-1}, f_i\com f_{i+1}, f_{i+2},\cdots,f_q]   & \mbox{if $0 < i < q$}\\

[f_1,\cdots,f_{q-1}] & \mbox{if $i=q$} \\
\end{array} \right. \]
and, for $q\geq 1$,
\[ s_i[f_1,\dots,f_q] = [f_1, \cdots, f_i, \id, f_{i+1},\cdots, f_q].  \]
Of course, these can and should be expressed in terms of the maps $S$, $T$, $I$, 
and $C$ so as to remember the topology and check continuity.

Recall that a (right) action of a group $G$ on a simplicial space $Y_*$ 
is specified by levelwise group actions such that the $d_i$ and $s_i$ are
$G$-maps; formally, $Y_*$ is a simplicial object
in the category of (right) $G$-spaces.  Orbit and fixed point 
simplicial spaces are constructed levelwise, $(Y_*/G)_q = Y_q/G$ and
$(Y_*)^{G} _q = Y_q^{G}$. For a $G$-category $\sC$, 
$N(\sC^{G})\iso (N\sC)^{G}$ since $N$ is a right adjoint, but it
is rarely the case that $N(\sC/G) \iso (N\sC)/G$, as the following
counterexample should make clear. 

\begin{exmp}\mylabel{simple} Let $G$ be a group and let $G$ act on itself 
by conjugation. Let $A$ be the abelianization of $G$. Regarding $G$ and $A$ as
categories with a single object, $G/G \iso A$, and $NA$ is generally much smaller than
$NG/G$.  Here $[g_1,\dots, g_q]$ and $[h_1,\cdots, h_q]$ are 
in the same orbit under the conjugation action if and only if there is
a single $g$ such that $g g_i g^{-1} = g h_i g^{-1}$ for all $i$. For
example if $G$ is a finite simple group of order $n$, then $A$ is
trivial but $N_qG/G$ has at least $n^{q-1}$ elements.
\end{exmp}

In this example, $NG$ is the simplicial space, often denoted $B_*G$, 
whose geometric realization is the classifying space $BG$.
Parametrizing with a left $G$-space 
$Y$ gives a familiar simplicial space $B_*(\ast,G,Y)$ (e.g. \cite[\S7]{MayClass}).
Write $q\colon E_*G \rtarr B_*G$ for the map
\[  B_*(\ast,G,G)\rtarr  B_*(\ast, G, \ast) \iso B_*(\ast,G,G)/G \]
induced by $G\rtarr\ast$. The isomorphism on the right is obvious, but it is in
fact an example of an isomorphism of the form $N(\sC/G) \iso (N\sC)/G$, as the
following observations make clear.  Recall the translation category from \myref{elt}.

\begin{lem}\mylabel{Cat1}  The simplicial space $NT(G,Y)$ is isomorphic to $B_*(\ast,G,Y)$.
\end{lem}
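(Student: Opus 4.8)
The plan is to unwind both sides of the claimed isomorphism $N\langle G,Y\rangle \iso B_*(\ast,G,Y)$ as simplicial spaces and check that the evident bijections assemble into a simplicial map. First I would record the $q$-simplices on each side. On the left, $N_0\langle G,Y\rangle = Y$ and, for $q>0$, $N_q\langle G,Y\rangle$ is the iterated pullback of the morphism space $G\times Y$ over the object space $Y$ along the pair $(S,T)$. Using the description in \myref{elt}, where $(g,y)$ is the morphism $g\colon y\rtarr gy$, a chain of $q$ composable arrows in the orientation of diagram (\ref{dyslexia}) is determined by $(g_1,\dots,g_q)$ together with the source $y$ of the rightmost arrow $f_q$, so $N_q\langle G,Y\rangle \iso G^q\times Y$ as a space, with the homeomorphism natural. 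On the right, $B_q(\ast,G,Y) = G^q\times Y$ by definition of the two-sided bar construction. So the two simplicial spaces agree levelwise; the content is matching the face and degeneracy maps.

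Next I would compare the face maps. On the bar side the faces of $B_*(\ast,G,Y)$ are the standard ones: $d_0$ drops $g_1$ (using that the left argument is a point), $d_i$ multiplies $g_ig_{i+1}$ for $0<i<q$, and $d_q$ acts on the $Y$-coordinate, sending $(g_1,\dots,g_q,y)$ to $(g_1,\dots,g_{q-1},g_q y)$. On the nerve side I would compute the faces from the formulas in \S\ref{Secnerve} applied to $\langle G,Y\rangle$: the key points are that composition in $\langle G,Y\rangle$ sends $(h,g,y)$ to $(hg,y)$, that $S(g,y)=y$ and $T(g,y)=gy$, and that, with the labelling $[f_1,\dots,f_q]\leftrightarrow (g_1,\dots,g_q,y)$ where $y=S(f_q)$, the innermost face $d_q$ deletes $f_q$ and hence changes the recorded source from $S(f_q)=y$ to $S(f_{q-1})=T(f_q)=g_q y$. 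This is exactly the bar-side $d_q$, while $d_0$ and the middle $d_i$ match because composing adjacent arrows in $\langle G,Y\rangle$ multiplies the corresponding group elements. The degeneracy maps insert an identity arrow $(e,\,\cdot\,)$, i.e.\ an $e$ in the appropriate slot, which is the bar-side $s_i$.

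Finally I would observe that this identification is natural in $Y$ for fixed $G$ (and in the pair $(G,Y)$), since both constructions were defined functorially — $\langle G,Y\rangle$ in \myref{elt}, and $B_*(\ast,G,Y)$ in the usual way — so the levelwise homeomorphisms $G^q\times Y \iso G^q\times Y$ commute with the maps induced by $G$-maps $Y\rtarr Y'$. I expect the main obstacle to be purely bookkeeping: keeping the source/target convention of \myref{elt} and the reversed orientation of diagram (\ref{dyslexia}) consistent, so that the innermost face $d_q$ is correctly seen to act on the $Y$-coordinate via the action map rather than on a group coordinate. Once that convention is pinned down the rest is a routine check that the simplicial identities on both sides are the same formulas, so I would present the argument as the verification that the tautological bijection $N_q\langle G,Y\rangle\iso B_q(\ast,G,Y)$ is compatible with all $d_i$ and $s_i$, and remark that continuity is automatic since every map in sight is built from $S$, $T$, $I$, $C$, the product on $G$, and the action map $G\times Y\rtarr Y$.
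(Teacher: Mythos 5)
Your proposal is correct and is essentially the paper's own argument: the paper likewise identifies a $q$-chain in $N_q\langle G,Y\rangle$ (in the orientation of (\ref{dyslexia})) with the data $[g_1,\dots,g_q]y$ via $f_i=(g_i,\,g_{i+1}\cdots g_q y)$, leaving the face and degeneracy check implicit where you spell it out. The one point you flag as delicate --- that $d_q$ acts on the $Y$-coordinate through the action map because deleting $f_q$ moves the recorded source from $S(f_q)$ to $T(f_q)=g_qy$ --- is exactly the content of the paper's displayed formula, so the two arguments coincide.
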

\begin{proof}
A typical $q$-tuple (\ref{dyslexia}) in $N_qT(G,Y)$ has $i^{th}$ term
$$ f_i =(g_i,g_{i+1}\cdots g_qy) \colon g_{i+1}\cdots g_qy\rtarr g_ig_{i+1}\cdots g_qy$$
for elements $g_i\in G$ and $y\in Y$. It corresponds to $[g_1,\cdots,g_q]y$ in $B_q(\ast,G,Y)$. 
\end{proof}

\begin{rem}\mylabel{simpcon} For any space $X$, $N\tilde X$ is the simplicial space denoted 
$D_*X$ in \cite[p. 97]{MayGeo}.  Our choice of $S$ and $T$ on $\tilde X$ is consistent with 
(\ref{dyslexia}) and the usual notation $(x_0,\cdots, x_{q})$ for $q$-simplices.  The claim in \myref{chacat} that $|N\tilde{X}|$ 
is contractible is immediate from \cite[10.4]{MayGeo}, which says that $D_*X$ is simplicially contractible. 
The isomorphism $N\mu\colon NT(G,G) \rtarr N\tilde G$ 
implied by \myref{silly} coincides with the isomorphism $\al_*\colon E_*G\rtarr D_*G$
of \cite[10.4]{MayGeo}. 
\end{rem}

Applying geometric realization, write $B(\ast,G,Y) = |B_*(\ast,G,Y)|$, and similarly
for $EG$ and $BG$. Then
$B(\ast,G,Y)\iso B(\ast,G,G)\times_G Y = EG\times_G Y$. By \myref{Cat1},
\[ B T(Y,G) = EG\times_G Y.\]
A relevant example is $Y=G/H$ for a (closed) subgroup $H$ of $G$. The space 
\[
BT(G,G/H) = EG\times_G(G/H) \iso (EG)/H
\]
is a classifying space $BH$ since $EG$ is a free contractible $H$-space. 

In particular, take $G=\sU(X,\PI)$ and $H=\PI$ for a space $X$ and
group $\PI$, remembering that $\sC\!at(\tilde X,\tilde{\PI})$ is the chaotic 
category with object space the group $\sU(X,\PI)$.  
Applying the classifying space functor to the diagram of \myref{notformal}
and using \myref{Cat1}, 
we obtain the following commutative diagram, in which the horizontal maps 
are homeomorphisms and, up to canonical homeomorphisms, the vertical maps are 
obtained by passage to orbits over $\PI$. 
\[ \xymatrix{
E(\sU(X,\PI)) \ar[r]^-{\iso} \ar[d] & B\sC\!at(\tilde{X},\tilde\PI) \ar[d]
\ar[r]^-{=} & B\sC\!at(\tilde{X},\tilde\PI) \ar[d]\\
(E\sU(X,\PI))/\PI \ar[r]_-{\iso} & B(\sC\!at(\tilde{X},\tilde\PI)/\PI) 
\ar[r]_-{\iso} & B\sC\!at(\tilde{X},\PI)\\} \]
Ignoring minor topological niceness 
conditions\footnote{The identity element of the group $\sU(X,\PI)$ should be a 
nondegenerate basepoint and the space $\sU(X,\PI)$ should be paracompact; 
see \cite[9.10]{MayClass}.}, 
for any space $X$ the diagram gives isomorphic categorical models for the universal
principal $\PI$-bundle $E\PI \rtarr B\PI$.

\section{Categorical universal equivariant principal bundles}\label{Sec3}
\subsection{Preliminaries on actions by the semi-direct product $\GA$}
Now return to the split extension (\ref{split}) of the introduction.  For a 
$\GA$-category or $\GA$-space, passage to orbits with respect to $\PI$ 
gives a $G$-category or a $G$-space.  It is standard in equivariant bundle
theory to let $G$ act from the left and $\PI$ act from the right. Thus 
suppose that $X$ is a left $G$ and right $\PI$ object in any category.
Using elementwise notation, turn the right action of $\PI$ into
a left action by setting $\si x = x\si^{-1}$. 

By an action of $\GA$ 
on $X$, we understand a left action that coincides with the given actions 
when restricted to the subgroups $G = e\times G$ and $\PI = \PI\times e$ of $\GA$.  Since
$(\si,g) = (\si,e)(e,g)$, the action must be defined by
\begin{equation}\label{action}
(\si,g)x =  (\si,e)(e,g) x = (\si,e)gx = \si gx = (gx)\si^{-1}.
\end{equation}

For now, we will denote the action of $G$ on $\Pi$ by $\cdot$, but we just use juxtaposition 
for the prescribed actions of $G$ and $\PI$ on $X$.  Since the action by $g$ on $\PI$ is a group homomorphism, $g\cdot(\si\ta) = (g\cdot \si)(g\cdot\ta)$ and 
$g\cdot\si^{-1} = (g\cdot\si)^{-1}$. 
The interaction of $\PI$ and $G$ in $\GA$ is given by the twisted commutation relation
\[ (e,g)(\si,e) = (g\cdot\si,g) = (g\cdot \si,e)(e,g), \] 
or the same relation with $\si$ replaced by $\si^{-1}$.
Therefore (\ref{action}) gives an action of $\GA$ if and only if the given actions
of $\PI$ and $G$ satisfy the twisted commutation relation
\begin{equation}\label{reaction}
g(x\si) = (gx)(g\cdot \si).
\end{equation}
The placement of parentheses is crucial: we are taking group actions in
different orders.
When the action of $G$ on $\PI$ is trivial, $g\cdot \si=\si$, this is the familiar statement that commuting left and right actions define an action by the 
product $\PI\times G$.

\begin{lem}\mylabel{action2} For a $G$-category $\sA$, the left $G$ and right $\PI$-actions on 
$\sC\!at(\sA,\tilde{\PI})$ extend naturally to a $\GA$-action.
\end{lem}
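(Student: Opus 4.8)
The plan is to write out the two given actions explicitly and then check that together they satisfy the twisted commutation relation (\ref{reaction}), so that the formula (\ref{action}) produces the $\GA$-action. Recall that $\tilde\PI$ denotes the chaotic $G$-category on $\PI$, with $G$ acting through its action on $\PI$ and diagonally on the morphism object $\PI\times\PI$; write $g\cdot(-)$ for this action, as above. The left $G$-action on $\sC\!at_G(\sA,\tilde\PI)$ is the conjugation action of \S\ref{Sec1}: $(gF)(A)=g\cdot F(g^{-1}A)$ on a functor $F\colon\sA\rtarr\tilde\PI$ and $(g\et)_A=g\cdot\et_{g^{-1}A}$ on a natural transformation $\et$. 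The right $\PI$-action is induced by the right translation action of $\PI$ on $\tilde\PI$, that is, by postcomposition with right multiplication by $\la\in\PI$: $(F\la)(A)=F(A)\cdot\la$ and $(\et\la)_A=\et_A\cdot\la$, where on morphism objects $\la$ acts coordinatewise. (Continuity is clear, since right translation on $\PI$ is continuous.) As in the paragraph preceding (\ref{reaction}) we convert the right $\PI$-action to a left one by $\la x=x\la^{-1}$; since the conjugation $G$-action and the postcomposition $\PI$-action are genuine actions restricting correctly to the two subgroups of $\GA$, only (\ref{reaction}) remains to be verified.

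The key computation is the identity $g(F\la)=(gF)(g\cdot\la)$ together with its analogue for natural transformations. Evaluating the left-hand side at an object $A$ of $\sA$ gives $g\cdot\bigl((F\la)(g^{-1}A)\bigr)=g\cdot\bigl(F(g^{-1}A)\cdot\la\bigr)$, while the right-hand side gives $\bigl((gF)(A)\bigr)\cdot(g\cdot\la)=\bigl(g\cdot F(g^{-1}A)\bigr)\cdot(g\cdot\la)$; these agree precisely because $g\cdot(-)$ is a group homomorphism $\PI\rtarr\PI$, so $g\cdot(\si\la)=(g\cdot\si)(g\cdot\la)$. The identical argument with $F(g^{-1}A)$ replaced by $\et_{g^{-1}A}$, using the diagonal action on $\PI\times\PI$ coordinatewise, handles morphism objects, and all of this is manifestly compatible with the structure maps $S$, $T$, $I$, and $C$ of $\sC\!at_G(\sA,\tilde\PI)$. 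Hence (\ref{reaction}) holds, and (\ref{action}) defines the $\GA$-action $(\si,g)x=(gx)\si^{-1}$, which by construction restricts to the prescribed $G$- and $\PI$-actions, and which is the only such extension.

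Finally, for naturality: precomposition with a $G$-functor $\sA'\rtarr\sA$ gives a functor $\sC\!at_G(\sA,\tilde\PI)\rtarr\sC\!at_G(\sA',\tilde\PI)$ strictly commuting with both the conjugation $G$-action and the postcomposition $\PI$-action, hence with the resulting $\GA$-actions, so the construction is functorial in $\sA$. The only real care needed is the bookkeeping of left versus right conventions; the one substantive point --- the ``obstacle'' in spirit rather than in difficulty --- is that the $G$- and $\PI$-actions genuinely twist-commute rather than commute, the factor $g\cdot\la$ in the computation above being exactly what produces the semidirect product $\GA$ and not the direct product $\PI\times G$, which is recovered when $G$ acts trivially on $\PI$.
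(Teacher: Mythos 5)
Your proof is correct and follows essentially the same route as the paper: the heart of both arguments is the verification that $g(F\si)=(gF)(g\cdot\si)$ by evaluating at an object and using that $g\cdot(-)$ is a group homomorphism of $\PI$. The only cosmetic difference is that the paper dispenses with the check on natural transformations by observing that the unique natural transformation between two functors into the chaotic category $\tilde{\PI}$ is automatically a $\GA$-map, whereas you carry out the (equally valid) coordinatewise computation on the morphism object $\PI\times\PI$.
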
 
\begin{proof} We must verify that $g(F\si) = (gF)(g\cdot\si)$ 
for $g\in G$, $\si\in \PI$ and a functor $F\colon \sA\rtarr \PI$. The unique 
natural transformation $E\rtarr F$ between a pair of functors $E$ and $F$ will 
then necessarily be given by $\GA$-maps.  The verification is formal from the 
fact that $G$ acts by conjugation, so that the action of $G$ on $\PI$ is 
part of the prescription of the action of $G$ on $F$.
Recall that the left action of $G$ on 
$\sC\!at(\sA,\tilde{\PI})$ is given by conjugation, 
$(gF)(a)=g\cdot F(g^{-1}a)$ 
for $g\in G$ and an object or morphism $a\in \sA$. The right action of $\PI$ is
given by $(F\si)(a) = F(a)\si$.  Then
\begin{eqnarray*}
(g(F\si))(a) &=& g\cdot (F\si)(g^{-1}a) \\
             &=& g\cdot (F(g^{-1}a)\si) \\ 
             &=& (g\cdot F(g^{-1}a))(g\cdot\si) \\
             &=& ((gF)(a))(g\cdot\si)\\
             &=& ((gF)(g\cdot\si))(a).   \quad \quad \qedhere
\end{eqnarray*}
\end{proof}

In particular, let $\sA = \tilde{X}$ for a left $G$-space $X$. Then the given
action of $G$ on the object space $X$ and the diagonal action of $G$ on the 
morphism space $X\times X$ give a left $G$-action on the category $\tilde{X}$.  
\myref{action2} shows that the left $G$ and right $\PI$-action on 
$\sC\!at(\tilde X,\tilde{\PI})$ give it an action by $\GA$.  Explicitly, the conjugation 
left action by $G$ and the evident right action by $\PI$ on the 
object space $\sU(X,\PI)$ induce diagonal actions on the morphism space 
$\sU(X,\PI)\times \sU(X,\PI)$, and these specify left $G$ and right $\PI$-actions 
on $\sC\!at(\tilde X,\PI)$ that satisfy the commutation relation required for 
a $\GA$-action. 

Specializing further to $X=G$, we have the
following equivariant elaboration of \myref{silly}.  We change the group $G$ there to
the group $\sU(G,\PI)$ here and remember that the product on $\sU(G,\PI)$ is just the
pointwise product induced by the product on $\PI$, with no dependence on the product
of $G$. Ignoring the group action, we may identify the chaotic right $\sU(G,\PI)$-category 
with object space $\sU(G,\PI)$ with the category $\sC\!at(\tilde G,\tilde \PI)$. The following
lemma identifies group actions. Remember that $\PI$
is a subgroup of $\sU(G,\PI)$.  

\begin{lem}\mylabel{silly2} The isomorphism of right $\sU(G,\PI)$-categories
\[ \mu\colon T(\sU(G,\PI),\sU(G,\PI))
\rtarr \Ch \PI) \]
is an isomorphism of $\GA$-categories, where the $G$-action on both
source and target categories is given by the conjugation action on 
the object space $\sU(G,\PI)$ and the resulting diagonal action on the morphism 
space $\sU(G,\PI)\times \sU(G,\PI)$. 
\end{lem}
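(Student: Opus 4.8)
The plan is to take the isomorphism of right $\sU(G,\PI)$-categories furnished by \myref{silly}---applied with the topological group $\sU(G,\PI)$ in place of the group $G$ there, and the chaotic category $\sC\!at(\tilde G,\tilde\PI)$ on the group $\sU(G,\PI)$ in place of $\tilde G$---and to promote it to an isomorphism of $\GA$-categories. Recall that this $\mu$ is the identity on the object space $\sU(G,\PI)$ and is given on morphisms by $\mu(\be,\al)=(\be\ast\al,\al)$, where $(\be,\al)$ is the morphism $\be\colon\al\rtarr\be\ast\al$ of the translation category. Since $\PI$ is a subgroup of $\sU(G,\PI)$, $\mu$ is already a right $\PI$-map, so the whole content is the $G$-equivariance of $\mu$; once that is in hand, $\GA$-equivariance will be formal because $\GA=\PI\rtimes G$. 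On both categories $G$ acts by conjugation on the object space, $(g\al)(x)=g\cdot\al(g^{-1}x)$, and diagonally on the morphism space $\sU(G,\PI)\times\sU(G,\PI)$; that this agrees on $\sC\!at_G(\tilde G,\tilde\PI)$ with the $2$-categorical conjugation action was recorded just before the statement of the lemma.

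The key step I would carry out is the computation that $G$ acts on the topological group $\sU(G,\PI)$ by homomorphisms for the pointwise product $\ast$: for $g\in G$ and $\al,\be\in\sU(G,\PI)$,
\[ \bigl(g(\al\ast\be)\bigr)(x) \;=\; g\cdot\bigl(\al(g^{-1}x)\,\be(g^{-1}x)\bigr) \;=\; \bigl(g\cdot\al(g^{-1}x)\bigr)\bigl(g\cdot\be(g^{-1}x)\bigr) \;=\; \bigl((g\al)\ast(g\be)\bigr)(x), \]
which holds because $g$ acts on $\PI$ by a group automorphism. This already shows that the diagonal conjugation action is a genuine (functorial) action of $G$ on the translation category, since it then commutes with the structure maps $S$, $T$, $I$, and $C$; and it gives the $G$-equivariance of $\mu$ on morphisms,
\[ \mu\bigl(g\cdot(\be,\al)\bigr) = \mu(g\be,\,g\al) = \bigl((g\be)\ast(g\al),\,g\al\bigr) = \bigl(g(\be\ast\al),\,g\al\bigr) = g\cdot\mu(\be,\al). \]
On objects $\mu$ is the identity and the two conjugation actions agree, so $\mu$ is $G$-equivariant there as well.

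To conclude, I would observe that $\mu$ is an isomorphism of underlying topological categories that is equivariant for the actions of both $G$ and $\PI$. Hence the $\GA$-action on $\sC\!at_G(\tilde G,\tilde\PI)$ supplied by \myref{action2} transports along $\mu^{-1}$ to an action of $\GA$ on $\langle\sU(G,\PI),\sU(G,\PI)\rangle$ whose restrictions to $G$ and to $\PI$ are the given conjugation and right actions, and $\mu$ is by construction an isomorphism of $\GA$-categories for it. Equivalently, one checks directly that the given $G$- and $\PI$-actions on the translation category satisfy the twisted commutation relation (\ref{reaction}): this is the displayed computation above with $\be$ taken to be a constant map $\si\in\PI$, together with its diagonal analogue on morphisms, after which (\ref{action}) produces the $\GA$-action. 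The main obstacle here is not any real difficulty but bookkeeping with the several competing actions---one must remember that the relevant $G$-action on the translation category is the \emph{diagonal} conjugation action on $\sU(G,\PI)\times\sU(G,\PI)$, not conjugation in the translating variable alone, and it is exactly the compatibility of conjugation with the product $\ast$ displayed above that makes $\mu$ intertwine the two $G$-actions and hence the two $\GA$-actions.
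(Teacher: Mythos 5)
Your proposal is correct and follows essentially the same route as the paper: the whole content is the identity $g(\be\ast\al)=(g\be)\ast(g\al)$, which holds because $G$ acts on $\PI$ by group automorphisms, and which makes $\mu$ intertwine the diagonal conjugation actions on the morphism spaces. The paper phrases this by transporting the $G$-action along $\mu^{-1}$ and computing that the result is the diagonal conjugation action, while you verify equivariance of $\mu$ for the given actions directly; these are the same computation read in opposite directions.
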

\begin{proof} Since $\mu$ is an isomorphism and a $\PI$-map, we can and must 
give the source category the unique $G$-action such that $\mu$ is a $G$-map.  
Since $\mu$ is the identity map on object spaces, the action must be the 
conjugation action on the object space. On an element
$(\be,\al)$ of the morphism space, we must define
\[ g(\be,\al) = \mu^{-1}(g\mu(\be,\al)) = \mu^{-1}(g(\be\al),g\al) 
= \mu^{-1}((g\be)(g\al),g\al) = (g\be,g\al). \quad \qedhere \]
\end{proof}

\begin{lem}\mylabel{silly3} With $X=G$, the diagram of \myref{notformal} is a
commutative diagram of $\GA$-categories and maps of $\GA$-categories, where
$\GA$ acts through the quotient homomorphism $\GA\rtarr G$ on the three
categories on the bottom row.
\end{lem}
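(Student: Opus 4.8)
The plan is to carry the $\GA$-actions constructed above along the top row of the diagram of \myref{notformal} (specialized to $X=G$) and then to exploit the fact that passage to $\PI$-orbits turns $\GA$-categories into $G$-categories and $\GA$-functors into $G$-functors, so that the bottom row and the vertical and diagonal maps largely take care of themselves. First I would record that the top-right vertex $\sC\!at_G(\tilde G,\tilde\PI)$ is a $\GA$-category by \myref{action2} applied with $\sA=\tilde G$, that the top-left vertex $\langle{\sU(G,\PI),\sU(G,\PI)}\rangle$ carries the $\GA$-action described in \myref{silly2} (conjugation on the object space $\sU(G,\PI)$, diagonal on the morphism space), and that by \myref{silly2} the isomorphism $\mu$ is an isomorphism of $\GA$-categories.

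Next I would use that $\PI$ is normal in $\GA$ with $\GA/\PI\iso G$: for any $\GA$-category $\sD$ the orbit category $\sD/\PI$ carries a $\GA$-action on which $\PI$ acts trivially, hence a $G$-action, and the projection $\sD\rtarr\sD/\PI$ is a $\GA$-functor for the quotient action on its target; moreover this construction is functorial in $\sD$. By the proof of \myref{notformal} the left-hand map $q$, the map $p$, and the right-hand map $q$ are all obtained by passage to $\PI$-orbits, so each is a $\GA$-functor through the quotient, the bottom-left vertex $\langle{\sU(G,\PI),\sO(G,\PI)}\rangle$ and the bottom-middle vertex $\sC\!at(\tilde G,\tilde\PI)/\PI$ acquire the asserted $G$-actions, and $\nu$ --- being the map induced on $\PI$-orbits by the $\GA$-functor $\mu$ --- is a $G$-functor automatically.

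The one point requiring an actual (if short) argument is $\xi$, whose target $\sC\!at(\tilde G,\mathbf{\PI})$ carries the intrinsic conjugation $G$-action rather than a quotient action. Here I would show directly that the functor $q=\sC\!at_G(\id,p)\colon \sC\!at_G(\tilde G,\tilde\PI)\rtarr \sC\!at_G(\tilde G,\mathbf{\PI})$ of \myref{psqs} is a $\GA$-functor when $\GA$ acts through the quotient on the target: the functor $p\colon \tilde\PI\rtarr\mathbf{\PI}$ is itself $G$-equivariant for the action of $G$ on $\PI$ (since that action is by group homomorphisms), so $q$ respects the conjugation $G$-actions, while $p$ coequalizes the right-translation automorphisms of $\tilde\PI$ that define the right $\PI$-action on $\sC\!at_G(\tilde G,\tilde\PI)$, so $q(F\si)=q(F)$ and $\PI$ acts trivially on the image; combining these via (\ref{action}) gives $\GA$-equivariance through the quotient. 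Since $p\colon \sC\!at_G(\tilde G,\tilde\PI)\rtarr\sC\!at_G(\tilde G,\tilde\PI)/\PI$ is precisely the $\PI$-orbit projection and $q=\xi\com p$, the isomorphism $\xi$ is the map induced on orbits by $q$; hence $\xi$ is a $G$-functor, and the residual $G$-action it induces on its source agrees with the one already placed there. Commutativity of the diagram is given by \myref{notformal}, which finishes the argument.

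I do not expect a genuine obstacle here: the content is entirely formal once one believes \myref{action2}, \myref{silly2}, \myref{notformal}, and the functoriality of passage to $\PI$-orbits. The only place to be careful is the bookkeeping of the interacting left $G$- and right $\PI$-actions and the twist (\ref{reaction}) --- exactly the ``categorical dyslexia'' flagged in \S\ref{Secnerve} --- which one must keep straight in order to see that $q$, and therefore $\xi$, intertwines the genuine $\GA$-action on its source with the quotient $G$-action on its target.
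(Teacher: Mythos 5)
Your argument is correct and follows essentially the same route as the paper: the trapezoid is handled by passage to $\PI$-orbits from the $\GA$-structure supplied by \myref{silly2}, the key verification is that $p\colon\tilde\PI\rtarr\mathbf{\PI}$ is $G$-equivariant because $G$ acts on $\PI$ by group homomorphisms (the paper just writes out $g\cdot p(\ta,\si)=(g\cdot\ta)(g\cdot\si)^{-1}=p(g\cdot\ta,g\cdot\si)$ explicitly), and the equivariance of $q=\sC\!at_G(\tilde G,p)$ and of $\xi[F]=p\com F$ then follows. Your added remarks about functoriality of $\PI$-orbits and the triviality of the $\PI$-action on the image are just a more explicit rendering of the same steps.
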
 
\begin{proof} Since the trapezoid is obtained by passing to orbits under the
action of $\PI$, it
is a diagram of $\GA$-categories by \myref{silly2}.  The functor
$p\colon \tilde{\PI}\rtarr \PI$ of \myref{psqs} is a $G$-map since
\[  g\cdotp(\ta,\si) = g\cdot (\ta\si^{-1}) = (g\cdot\ta)(g\cdot \si)^{-1}
= p(g\cdot \ta,g\cdot \si).\]
It follows that the right vertical arrow $q = \sC\!at(\tilde G,p)$ is a
map of $\GA$-categories.   Letting $[F]$ denote the orbit of a functor
$F\colon \tilde G\rtarr \tilde \PI$ under the right action of $\PI$, the
functor $\xi$ is specified by $\xi[F] = p\com F$, and it follows that 
$\xi$ is $\GA$-equivariant. 
\end{proof}

\subsection{Universal principal $(G,\PI_G)$-bundles}\label{Universal}
Observe that for any $G$-category $\sA$, the corepresented functor 
$\sC\!at(\sA,-)$ from $G$-categories to $G$-categories is a right 
adjoint and therefore preserves all limits.  We take $\sA$ to be the 
$G$-category $\tilde{G}$ from now on, and we use the functor 
$\Ch -)$ to obtain a convenient categorical description of 
universal principal $(G,\PI_G)$-bundles. Variants of the construction are
given in \cite{May, MS}. 

\begin{defn}\mylabel{GPI} Let $G$ and $\PI$ be topological groups and let 
$G$ act on $\PI$.  Define $E(G,\PI_G)$ to be the $\GA$-space
$B\Ch \tilde\PI) = |N\Ch \tilde{\PI})|$
and define $B(G,\PI_G)$ to be the orbit $G$-space $E(G,\PI_G)/\PI$. Let
$p\colon E(G,\PI_G) \rtarr B(G,\PI_G)$
be the quotient map.
\end{defn}

We need a lemma in order to prove that $p$ is a universal $(G,\PI_G)$-bundle
in favorable cases.  We defer the proof to the next section. We believe that 
the result is true more generally, but there are point-set topological issues 
obstructing a proof. We shall not obscure the simplicity 
of our work by seeking maximum generality. As usual in equivariant bundle 
theory, we assume that all given subgroups are closed. 

\begin{lem}\mylabel{FixedUni} Let $\LA$ be a subgroup of $\GA$. If
$\LA\cap \PI\neq e$, then the fixed point category $\Ch \tilde{\PI})^{\LA}$ is empty. 
At least if $G$ is discrete, if 
$\LA\cap \PI = e$, then $\Ch \tilde{\PI})^{\LA}$ is non-empty and chaotic.
\end{lem}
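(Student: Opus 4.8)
The plan is to analyze fixed points of the $\GA$-action on $\sC\!at_G(\tilde G,\tilde\PI)$ using the explicit identifications already established. By \myref{silly2} (together with \myref{notformal} applied with $X=G$), the $\GA$-category $\sC\!at_G(\tilde G,\tilde\PI)$ is isomorphic to the translation category $\langle\sU(G,\PI),\sU(G,\PI)\rangle$, where $G$ acts by the conjugation action on the object group $\sU(G,\PI)$ (combining the pointwise $\PI$-action with the left-translation action of $G$ on the source $G$), $\PI\subset\sU(G,\PI)$ acts on the right, and these fit together into a $\GA$-action as in \myref{action2}. Since the object space is a topological group and the category is chaotic-up-to-the-translation-identification, it suffices to understand the fixed objects: a chaotic category with nonempty fixed object set is automatically fixed and chaotic, and if the fixed object set is empty the fixed category is empty. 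So the whole lemma reduces to identifying $\sU(G,\PI)^{\LA}$, the set of $\al\colon G\rtarr\PI$ fixed by the given $\LA$-action.

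First I would spell out that action explicitly. An element $\al\in\sU(G,\PI)$ corresponds (via \myref{Ob(X,P)}, with $X=G$) to a functor $E_\al\colon\tilde G\rtarr\mathbf\PI$, but more usefully, the $\GA$-action on $\sU(G,\PI)$ from \myref{silly2} is: for $(\si,g)\in\GA$, $((\si,g)\cdot\al)(h) = \si\,\bigl(g\cdot\al(g^{-1}h)\bigr)$ — the conjugation left $G$-action composed with right multiplication by $\si$ — up to the normalization convention forcing $\al(\bp)=e$ after passage to $\sO(G,\PI)$; but for the $\tilde\PI$ (not $\mathbf\PI$) statement we stay in $\sU(G,\PI)$ and do not normalize. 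Then $\al$ is $\LA$-fixed iff for every $(\si,g)\in\LA$ and every $h\in G$ we have $\al(h) = \si\,(g\cdot\al(g^{-1}h))$. Setting $h=e$ gives $\al(e) = \si\,(g\cdot\al(g^{-1}))$, which ties $\si$ to $\al$; more cleanly, choosing a fixed basepoint value $\al(e)=c\in\PI$, the equation for general $h$ is forced, and one reads off that $\LA$ must map into $\PI\rtimes G$ in a way compatible with a \emph{crossed homomorphism}. Concretely: if $\LA\cap\PI\neq e$, pick $(\ta,e)\in\LA$ with $\ta\neq e$; the fixed-point equation at this element reads $\al(h)=\ta\,\al(h)$ for all $h$, forcing $\ta=e$, a contradiction — hence $\sU(G,\PI)^{\LA}=\emptyset$ and the fixed category is empty. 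This gives the first sentence.

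For the second sentence, assume $\LA\cap\PI=e$ and $G$ discrete. Then $\LA$ projects isomorphically onto its image $\bar\LA\le G$ under $\GA\rtarr G$, so $\LA = \{(\gamma(g),g) : g\in\bar\LA\}$ for a unique function $\gamma\colon\bar\LA\rtarr\PI$, and the group law on $\GA$ forces $\gamma$ to be a crossed homomorphism: $\gamma(gg') = \gamma(g)\,(g\cdot\gamma(g'))$. The task is to produce $\al\in\sU(G,\PI)$ with $\al(h) = \gamma(g)\,(g\cdot\al(g^{-1}h))$ for all $g\in\bar\LA$, $h\in G$. The natural construction: choose a set of coset representatives for $\bar\LA\backslash G$, and on each coset $\bar\LA h_0$ define $\al(g h_0) = \gamma(g)$ (using the crossed-homomorphism identity to check this is well-defined and $\LA$-equivariant); choosing the representative of the coset $\bar\LA$ itself to be $e$ makes $\al(e)=e$, so $\al$ lands in $\sO(G,\PI)$ too, which is why the statement for $\sC\!at(\tilde G,\PI)$ (normalized) also holds. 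Discreteness of $G$ is exactly what makes this choice of section continuous (it is continuity-free), so $\al$ is a genuine point of $\sU(G,\PI)$. Hence $\sU(G,\PI)^{\LA}\neq\emptyset$, and being a nonempty fixed object set in a chaotic category, $\sC\!at_G(\tilde G,\tilde\PI)^{\LA}$ is nonempty and chaotic.

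The main obstacle is bookkeeping: getting the left-right and $g\cdot(-)$ conventions exactly right in the fixed-point equation, so that "$\LA$-fixed object'' really does translate into "crossed homomorphism $\bar\LA\rtarr\PI$'' and so that the coset-representative construction of $\al$ is visibly $\LA$-equivariant rather than merely $\LA$-equivariant up to the twist. I expect the clean way to sidestep sign errors is to work through the translation-category picture of \myref{silly2} rather than the functor picture, since there the action is literally conjugation on a group, and a conjugation-fixed point of a group under a twisted action is precisely a splitting/crossed-homomorphism datum. Once the dictionary "$\LA$-fixed object $\leftrightarrow$ section of $\GA\rtarr G$ over $\bar\LA$ landing in the $\LA$-conjugacy class'' is pinned down, both sentences of the lemma are immediate, and the deferred proof in the next section presumably just records this dictionary carefully and checks the point-set topology when $\PI$ is a compact Lie group rather than discrete.
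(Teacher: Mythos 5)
Your proposal follows the paper's proof essentially step for step: reduce via the chaotic-category lemmas to the nonemptiness of the $\LA$-fixed subset of the object space $\sU(G,\PI)$, use freeness of the $\PI$-action for the case $\LA\cap\PI\neq e$, identify $\LA$ with a crossed homomorphism $\ga\colon H\rtarr\PI$ exactly as in \myref{crosssubgp}, and construct the fixed map coset by coset, with discreteness of $G$ invoked only for continuity. The one bookkeeping point you flagged does bite: under the paper's conventions $\PI$ acts on $\sU(G,\PI)$ on the \emph{right}, so the fixed-point equation is $h\cdot f(h^{-1}g)=f(g)\ga(h)$ and the correct coset formula is $f(kg_i)=\ga(k)^{-1}$ rather than $\ga(k)$ --- which is precisely what the paper writes --- but this is an adjustment internal to the argument you describe, not a different proof.
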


The following result is \cite[Thm. 9]{LM}, but the details of the proof are 
in \cite[\S2]{La}.  A principal $(G,\PI_G)$-bundle is numerable if it is
trivial over the subspaces of $B$ in a numerable open cover.

\begin{thm}\mylabel{LaMay} A numerable principal $(G,\PI_G)$-bundle 
$p\colon E\rtarr B$ is universal if and only if $E^{\LA}$ is contractible 
for all (closed) subgroups $\LA$ of $\GA$ such that $\LA\cap \GA =\{e\}$.  
\end{thm}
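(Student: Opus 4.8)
The plan is to reduce the criterion to two inputs: homotopy invariance of numerable $(G,\PI_G)$-bundles, and one model universal bundle whose fixed-point structure we understand. For homotopy invariance I would first verify that $G$-homotopic $G$-maps $f_0\simeq f_1\colon X\rtarr B$ have isomorphic pullbacks $f_0^*p\iso f_1^*p$ over any numerable $G$-space $X$; this is the equivariant version of the classical argument and is proved as in \cite{La,tD}, trivialising the pullback of $p$ over $X\times I$ along the slices $X\times\{t\}$ by a partition of unity subordinate to a numerable trivialising cover, everything done $G$-equivariantly. Granting this, $X\mapsto\{(G,\PI_G)\text{-bundles over }X\}$ is a $G$-homotopy functor; moreover a standard chase shows that a $(G,\PI_G)$-bundle map $\tilde f\colon E\rtarr E'$ lying over a $G$-homotopy equivalence $f\colon B\rtarr B'$ is itself a $\GA$-homotopy equivalence (pull $E'$ back along a $G$-homotopy inverse of $f$ and feed the two $G$-homotopies into homotopy invariance), and conversely passing to $\PI$-orbits turns a $\GA$-homotopy equivalence of free $\PI$-spaces into a $G$-homotopy equivalence of orbit spaces. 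Hence, for any bundle map $\tilde f$ over $f$, ``$\tilde f$ is a $\GA$-homotopy equivalence'' is equivalent to ``$f$ is a $G$-homotopy equivalence''.

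Next I would fix the model $p_u\colon E_u\rtarr B_u$ to be $E(G,\PI_G)\rtarr B(G,\PI_G)$ of \myref{GPI}, so that by \myref{FixedUni} $E_u^\LA$ is contractible when $\LA\cap\PI=e$ and empty otherwise (any model of \cite{tD,La,LM} would do as well). The core of the argument is the recognition principle: a $(G,\PI_G)$-bundle map $\tilde f\colon E\rtarr E'$ of numerable bundles is a $\GA$-homotopy equivalence if and only if $\tilde f^\LA\colon E^\LA\rtarr (E')^\LA$ is a homotopy equivalence for every closed $\LA\le\GA$ with $\LA\cap\PI=e$. Since $\PI$ acts freely, $E$ and $E'$ have all isotropy in the family $\mathcal F=\{\LA\le\GA : \LA\cap\PI=e\}$, so once one knows they have the $\GA$-homotopy type of $\GA$-CW complexes with isotropy in $\mathcal F$, this is the equivariant Whitehead theorem relative to $\mathcal F$; alternatively, closer to \cite[\S2]{La}, one proves it directly by an obstruction argument over a numerable cover of $B'$, using contractibility of the fibres to build a classifying map and a homotopy one skeleton at a time. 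In either route the twisted action of $G$ on $\PI$, the stratification of a base $B$ by the $\PI$-conjugacy class of the crossed homomorphism classifying each fibre torsor, and the point-set regularity needed are the places where care is required.

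Granting the recognition principle, both implications follow. If $E^\LA$ is contractible for every $\LA$ with $\LA\cap\PI=e$, choose a classifying $G$-map $f\colon B\rtarr B_u$ and a $(G,\PI_G)$-bundle map $\tilde f\colon E\rtarr E_u$ covering it; for $\LA\cap\PI=e$ the map $\tilde f^\LA$ joins two contractible spaces, and for $\LA\cap\PI\neq e$ both fixed sets are empty, so by the recognition principle $\tilde f$ is a $\GA$-homotopy equivalence and $f$ a $G$-homotopy equivalence. Combined with homotopy invariance and the universal property of $p_u$, this makes pullback along $f\circ(-)$ a bijection on $G$-homotopy classes over every $X$, i.e.\ $p$ is universal. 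Conversely, if $p$ is universal then $p$ and $p_u$ represent the same functor, so the classifying $G$-maps $f\colon B\rtarr B_u$ and $g\colon B_u\rtarr B$ satisfy $g\circ f\simeq\mathrm{id}_B$ and $f\circ g\simeq\mathrm{id}_{B_u}$ by the uniqueness clause in the definition of universal; hence $f$ is a $G$-homotopy equivalence, any $(G,\PI_G)$-bundle map covering it is a $\GA$-homotopy equivalence, and so $E^\LA\simeq E_u^\LA$, which by \myref{FixedUni} is contractible precisely when $\LA\cap\PI=e$. I expect the main obstacle to be the recognition principle — in particular the passage from fixed-point data on total spaces to a $G$-homotopy equivalence of bases — since that is where the point-set hypotheses bite (as signalled already by the surrounding remarks on numerability and by the restriction to $G$ discrete with $\PI$ discrete or compact Lie), and where one essentially reproves \cite[\S2]{La} in the present language.
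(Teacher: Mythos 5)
The paper offers no proof of this statement: it is imported verbatim from Lashof--May, with the sentence ``The following result is \cite[Thm. 9]{LM}, but the details of the proof are in \cite[\S2]{La}'' serving as the entire justification. So there is no internal argument to match yours against; what you have written is a plan for reproving the cited result. Your plan has the right overall shape --- homotopy invariance of numerable bundles, comparison with a reference bundle, and a fixed-point recognition principle --- and your concluding deduction of both implications from those three inputs is correct. But two of the three inputs are not secured.

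First, there is a circularity in your choice of reference bundle. You take $p_u$ to be $E(G,\PI_G)\rtarr B(G,\PI_G)$ of \myref{GPI} and invoke its universality to ``choose a classifying $G$-map $f\colon B\rtarr B_u$''; but in this paper the universality of that map is \myref{universal}, which is \emph{deduced from} \myref{LaMay} together with \myref{FixedUni}. You cannot feed it back in as an input. You would need an independently constructed and independently verified universal bundle (tom Dieck's or Lashof's join construction \cite{tD,La}), or better --- and this is what \cite[\S2]{La} actually does --- prove the ``if'' direction with no reference bundle at all, by the Dold--Milnor partition-of-unity argument: numerability of a test bundle $q\colon D\rtarr X$ gives a cover over which $q$ is trivial, and contractibility of the $E^{\LA}$ lets one build a $\GA$-map $D\rtarr E$, and a $\GA$-homotopy between any two such, one element of the cover at a time. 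Second, the recognition principle as you state it (equivalences on $\LA$-fixed points imply a $\GA$-homotopy equivalence) is the equivariant Whitehead theorem relative to the family $\{\LA : \LA\cap\PI=e\}$ and requires the total spaces to have the $\GA$-homotopy type of $\GA$-CW complexes; a numerable principal bundle carries no such structure a priori, so this route needs a CW-approximation step you have not supplied, or must be replaced by the direct patching argument you mention only in passing. Since that patching argument is the entire analytic content of the theorem, deferring it as ``one essentially reproves \cite[\S2]{La}'' leaves the proposal reducing the theorem to itself. (A small separate point: the statement as printed has a typo --- the condition should read $\LA\cap\PI=\{e\}$, not $\LA\cap\GA=\{e\}$ --- and your reading of it is the intended one.)
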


We comment on the hypotheses. Recall from  point-set topology that a space 
$X$ is completely regular if for every closed subspace $C$ and every point $x$ 
not in $C$, there is a continuous function $f\colon X \rtarr [0,1]$ such that 
$f(x)=0$ and $f(C)={1}$. This is a weak condition that is satisfied by 
reasonable spaces, such as CW complexes.

\begin{rem} Specializing \cite[Propositions 4 and 5]{LM}, a 
principal $(G,\PI_G)$-bundle with completely regular total space is locally 
trivial, and a locally trivial principal $(G,\PI_G)$-bundle over a paracompact base 
space (such as a CW complex) is numerable. Therefore, modulo weak point-set topological 
conditions, the fixed point condition in \myref{LaMay} is the essential criterion for a 
universal bundle.
\end{rem}

Therefore \myref{FixedUni} has the following consequence. 

\begin{thm}\mylabel{universal} If $G$ is discrete and $\PI$ is either
discrete or a compact Lie group, the map 
\[ p\colon E(G,\PI_G)\rtarr B(G,\PI_G) \]
obtained by passage to orbits over $\PI$ is a universal principal $(G,\PI_G)$-bundle. 
\end{thm}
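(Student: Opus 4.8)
The plan is to deduce Theorem~\myref{universal} from the general recognition criterion \myref{LaMay} together with the fixed-point computation \myref{FixedUni}, checking only the point-set hypotheses in the two cases of interest. First I would observe that, by construction (\myref{GPI}), the map $p\colon E(G,\PI_G)\rtarr B(G,\PI_G)$ is obtained by applying $|N-|$ to the right-hand vertical $\PI$-map $q$ of \myref{notformal} in the case $X=G$, which by \myref{silly3} is a map of $\GA$-categories with $\GA$ acting through $\GA\rtarr G$ on the target. Since geometric realization preserves orbits (it is a left adjoint) while, by the discussion following \myref{Cat1}, the relevant nerve also commutes with passage to $\PI$-orbits in this particular example, $p$ is genuinely the $\PI$-orbit map and $E(G,\PI_G)$ carries a free $\PI$-action extending to a $\GA$-action with $p$ a $\GA$-map over $G$. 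Hence $p$ is a principal $(G,\PI_G)$-bundle in the sense of the definition preceding \myref{LaMay}, once we know the $\PI$-action on the total space is free and suitably well-behaved, which follows because $\sC\!at_G(\tilde G,\tilde\PI)$ is (by \myref{silly2}) the chaotic category on the free right $\PI$-space $\sU(G,\PI)$.

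Next I would invoke \myref{LaMay}: it remains to show that $E(G,\PI_G)^{\LA}$ is contractible for every (closed) subgroup $\LA\le\GA$ with $\LA\cap\PI=e$, and that $p$ is numerable. For contractibility, since $N$ commutes with $\LA$-fixed points ($N$ is a right adjoint, so $N(\sC^{\LA})\iso(N\sC)^{\LA}$, as noted in \S\ref{Secnerve}) and geometric realization commutes with finite limits such as fixed points under discrete groups, we get $E(G,\PI_G)^{\LA}\iso |N(\sC\!at_G(\tilde G,\tilde\PI)^{\LA})|$. By \myref{FixedUni}, under the hypothesis that $G$ is discrete this fixed category is non-empty and chaotic whenever $\LA\cap\PI=e$; and the classifying space of a non-empty chaotic topological category is contractible by \myref{chacat} together with \myref{simpcon}. (If instead $\LA\cap\PI\neq e$ the fixed category, and hence the fixed space, is empty, but such $\LA$ are excluded by the hypothesis of \myref{LaMay}.) This establishes the fixed-point criterion. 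The numerability/local-triviality hypotheses are handled via the Remark after \myref{LaMay}: one checks that $E(G,\PI_G)$ is completely regular and that the base is paracompact in the two cases, appealing to \cite[Propositions 4 and 5]{LM} to pass from complete regularity to local triviality and thence, over a paracompact base, to numerability.

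The main obstacle is exactly the point-set topology, which is why the theorem carries the hypotheses ``$G$ discrete and $\PI$ either discrete or a compact Lie group.'' When $\PI$ is discrete (so we are in the covering-space situation), $\sU(G,\PI)=\mathrm{Map}(G,\PI)$ with the compactly generated function-space topology is discrete-enough and the realization $E(G,\PI_G)$ is a CW complex, hence completely regular and paracompact, and \myref{FixedUni} applies directly. When $\PI$ is a compact Lie group, one must argue that $\sU(G,\PI)$ and the simplicial spaces $N_q\sC\!at_G(\tilde G,\tilde\PI)$ have the niceness required for \cite[9.10]{MayClass} (nondegenerate basepoint, paracompactness) so that realization behaves well, and that \myref{FixedUni} genuinely delivers a \emph{non-empty} chaotic fixed category — this is the place where compactness of $\PI$ is used, to guarantee existence of fixed points via an averaging/conjugacy argument. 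I would therefore spend most of the write-up verifying these topological points and citing \myref{FixedUni}, \myref{LaMay}, and \cite[9.10]{MayClass}, keeping the categorical part (which is essentially formal from \myref{silly3}, \myref{simpcon}, and the behavior of $N$ and $|-|$ on fixed points and orbits) brief.
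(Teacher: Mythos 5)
Your proposal follows the paper's own route exactly: the theorem is deduced from the Lashof--May recognition criterion (\myref{LaMay}) together with \myref{FixedUni}, using that $N$ and $|-|$ commute with fixed points so that $E(G,\PI_G)^{\LA}$ is the realization of a non-empty chaotic category and hence contractible, with numerability handled via the remark following \myref{LaMay}. One small correction: in the paper \myref{FixedUni} requires only that $G$ be discrete and is proved by an explicit coset-representative construction rather than any averaging argument, so the hypothesis that $\PI$ be discrete or a compact Lie group enters only through the point-set conditions (nondegenerate basepoint, paracompactness, complete regularity), not through the existence of fixed points.
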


The classifying space $B(G,\PI_G) = |N\Ch \tilde{\PI})|/\PI$ is obtained by first
applying the classifying space functor and then passing to orbits. On the other hand,
the space $B\Ch \PI)=|N\Ch \PI)|$ is obtained by first passing to orbits on the categorical level 
and then applying the classifying space functor.  The category $\Ch \PI)$ is 
thoroughly understood, as explained in \S\ref{Sec2}. The key virtue of our model
for $B(G,\PI_G)$ is that these two $G$-spaces can be identified, 
by \myref{notformal}.

\begin{thm}\mylabel{Main} The canonical map
\[ B(G,\PI_G) = |N\Ch \tilde{\PI})|/\PI \rtarr 
|N\Ch \PI)| = B \Ch \PI) \]
is a homeomorphism of $G$-spaces.  Therefore, if $G$ is discrete and 
$\PI$ is either discrete or a compact Lie group, the map
\[ Bq\colon B\Ch \tilde\PI) \rtarr B\Ch \PI) \]
is a universal principal $(G,\PI_G)$-bundle.
\end{thm}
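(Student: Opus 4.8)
The plan is to apply the classifying space functor $|N-|$ to the commutative diagram of \myref{notformal} in the case $X=G$. By \myref{silly2} and \myref{silly3} that diagram is then one of $\GA$-categories and $\GA$-functors, with $\GA$ acting through $\GA\rtarr G$ (and $\PI$ acting trivially) on the bottom row, with $\mu$, $\nu$, $\xi$ isomorphisms, and with the left-hand vertical equal to passage to orbits over $\PI$. Since $N$ and $|-|$ preserve products, $|N\mu|$, $|N\nu|$, $|N\xi|$ are $G$-homeomorphisms, so the whole question reduces to understanding the left-hand vertical $|Np|$; concretely this reproduces the diagram displayed just before \S\ref{Sec3}, now read in $G$-spaces, with $B(G,\PI_G)=|N\Ch \tilde\PI)|$ carrying its residual $\PI$-action at the top left, its $\PI$-orbit $G$-space at the bottom left, and $B\Ch \mathbf{\PI})$ at the bottom right. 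Once $|Np|$ is shown to factor through a $G$-homeomorphism $|N\Ch \tilde\PI)|/\PI \iso |N(\Ch \tilde\PI)/\PI)|$, composing with $|N\xi|$ along the bottom exhibits the canonical map $B(G,\PI_G)\rtarr B\Ch \mathbf{\PI})$ as a $G$-homeomorphism.

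The one substantive point, and the step I expect to be the main obstacle, is exactly this: that $N$ commutes with passage to $\PI$-orbits for $\sC=\Ch \tilde\PI)$ --- an identification that fails for general categories and actions, as \myref{simple} illustrates. I would prove it from the translation-category picture. Write $H=\sU(G,\PI)$, so $\PI$ is a closed subgroup of $H$ and $\sO(G,\PI)\iso H/\PI$; then $\mu$ of \myref{notformal} identifies $\sC$ with $\langle H,H\rangle$, with $\PI$ acting by right translation on the second (object) copy of $H$, and $\nu$ identifies $\sC/\PI$ with $\langle H,H/\PI\rangle$. By \myref{Cat1}, $N\langle H,H\rangle\iso B_*(\ast,H,H)$ and $N\langle H,H/\PI\rangle\iso B_*(\ast,H,H/\PI)$, which in simplicial degree $q$ are $H^q\times H$ and $H^q\times(H/\PI)$; the $\PI$-action on the first is right translation on the last coordinate only, and the functor $\langle H,H\rangle\rtarr\langle H,H/\PI\rangle$ induces in degree $q$ the map $\id\times p\colon H^q\times H\rtarr H^q\times(H/\PI)$. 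Since we work in compactly generated spaces, $H^q\times(-)$ preserves the quotient map $H\rtarr H/\PI$, so this is precisely passage to $\PI$-orbits at each level; hence $N$ of that functor realizes $N\sC/\PI\iso N(\sC/\PI)$ as an isomorphism of simplicial $G$-spaces, and applying the colimit-preserving functor $|-|$ gives the required $G$-homeomorphism. Note that this argument uses no hypothesis on $G$ or $\PI$, matching the unconditional first assertion.

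For the universality clause I would argue as follows. Under the stated hypotheses on $G$ and $\PI$, \myref{universal} says the orbit map $p\colon E(G,\PI_G)\rtarr B(G,\PI_G)$ is a universal principal $(G,\PI_G)$-bundle. On the categorical level $q=\xi\com p$ by \myref{silly3}, so $Bq=|N\xi|\com|Np|$; and by the previous paragraph $|Np|$ is the orbit map $E(G,\PI_G)=|N\Ch \tilde\PI)|\rtarr|N\Ch \tilde\PI)|/\PI=B(G,\PI_G)$ followed by the $G$-homeomorphism $|N\sC|/\PI\iso|N(\sC/\PI)|$, while $|N\xi|$ is a further $G$-homeomorphism onto $B\Ch \mathbf{\PI})$. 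Thus $Bq$ is the universal bundle $p$ with its base replaced by a $G$-homeomorphic space, and hence is itself a universal principal $(G,\PI_G)$-bundle. The only remaining work is bookkeeping --- checking that this base homeomorphism is the same one produced in the first statement, and carrying the harmless point-set hypotheses of \myref{universal} along --- and none of it poses a genuine difficulty.
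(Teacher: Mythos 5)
Your proposal is correct and follows essentially the same route as the paper: the paper likewise deduces the first claim by applying $|N-|$ to the diagram of \myref{notformal} (made $\GA$-equivariant via \myref{silly2} and \myref{silly3}) and uses \myref{Cat1} to identify the nerves with the bar constructions $B_*(\ast,H,H)$ and $B_*(\ast,H,H/\PI)$ for $H=\sU(G,\PI)$, so that passage to $\PI$-orbits is visibly levelwise and commutes with $|-|$; the universality clause is then read off from \myref{universal} exactly as you do. Your explicit verification that $H^q\times(-)$ preserves the quotient $H\rtarr H/\PI$ in compactly generated spaces merely spells out a step the paper leaves implicit.
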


\begin{sch}\mylabel{scholium} For finite groups $G$, this result
is claimed in \cite[p. 1294]{MS}.  For more general groups $G$, 
\cite[3.1]{MS} states an analogous result, but with $\tilde{\PI}\rtarr \PI$
replaced by a functor defined in terms of the nonequivariant universal bundle 
$E\PI\rtarr B\PI$, resulting in a much larger construction. The replacement is 
needed for the proof of their analogue \cite[3.3]{MS} of our \myref{FixedUni}. 
A commutation relation of the form $N(\sC/\PI) = (N\sC)/\PI$ for their larger
construction is stated (five lines above \cite[3.1]{MS}), but there is 
no hint of a proof or of the need for one. It is not altogether clear to us that 
the commutation relation stated there is true, and we view
the commutation relation \myref{notformal} as the main point of the proof 
of \myref{Main}. Nevertheless, \cite{MS} had the insightful right idea that led 
to our work.
\end{sch} 

\section{Determination of fixed points}\label{Sec4}
\subsection{The fixed point spaces of $E(G,\PI_G)$}
We must prove \myref{FixedUni}. Since $\PI$ acts freely on $\Ch \tilde{\PI})$, 
it is clear that $\Ch \tilde{\PI})^{\LA}$ is empty if $\LA\cap \PI\neq e$.  
Thus assume that $\LA\cap\PI = e$.  By \myref{chaochao2}, the fixed point category 
$\Ch \tilde{\PI})^{\LA}$ is chaotic. It remains to prove that it is non-empty, 
and Lemma \ref{chaosadj} implies that this is so if and only if the space 
$\sU(G,\Pi)^{\LA}$ is non-empty.  Thus it suffices to show that $\sU(G,\Pi)$
has a $\LA$-fixed point, which means that there is a $\LA$-map $f\colon G\rtarr \PI$.
We prove this using the following standard generalization of a homomorphism and
a variant needed later.

\begin{defn}\mylabel{crossedhom} A map $\al\colon G\rtarr \PI$ is a crossed 
homomorphism if
\begin{equation}\label{cross1} \al(gh)  = \al(g) (g\cdot \al(h))
\end{equation}
for all $g,h\in G$. In particular, 
\begin{equation}\label{cross2}
\al(e) = e, \   \al(g)^{-1}=g\cdot \al(g^{-1}) \ \text{ and } \ 
\al(g^{-1})^{-1} = g^{-1}\cdot \al(g). 
\end{equation}
A map $\al\colon G\rtarr \PI$ is a crossed anti-homomorphism if
\begin{equation}\label{cross3}
\al(gh) = (g\cdot \al(h))\al(g).
\end{equation}
\end{defn}

Note that we require $\al$ to be continuous in our general topological context.

\begin{lem}\mylabel{crosssubgp}  All subgroups $\LA$ of $\GA$ such that 
$\LA\cap \PI = e$ are of the form 
\[ \LA_{\al}=\{(\al(h),h)|h\in H\},\]
where $H$ is a subgroup of $G$ and $\al\colon H\rtarr \PI$ is a crossed 
homomorphism. At least if $G$ is discrete or $\GA$ is compact, $\al$ is
continuous.
\end{lem}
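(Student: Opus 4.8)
The plan is to set up a bijection between subgroups $\LA$ of $\GA$ meeting $\PI$ trivially and pairs $(H,\al)$ consisting of a subgroup $H\le G$ together with a crossed homomorphism $\al\colon H\rtarr \PI$, and then to address the continuity claim separately. First I would observe that, given any subgroup $\LA\le \GA$ with $\LA\cap\PI = e$, its image $H$ under the projection $\GA\rtarr G$ is a subgroup of $G$, and the restriction of this projection to $\LA$ is injective precisely because its kernel is $\LA\cap\PI = e$. Hence $\LA\rtarr H$ is a bijective homomorphism, so for each $h\in H$ there is a unique element of $\PI$, which I name $\al(h)$, with $(\al(h),h)\in\LA$; thus $\LA = \LA_{\al} = \{(\al(h),h)\mid h\in H\}$ as a set.

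The second step is to check that $\LA_\al$ is a subgroup if and only if $\al$ is a crossed homomorphism. This is a direct computation using the product law $(\si,g)(\ta,h) = (\si(g\cdot\ta),gh)$ in $\GA$: the product $(\al(g),g)(\al(h),h)$ equals $(\al(g)(g\cdot\al(h)),gh)$, which lies in $\LA_\al$ exactly when $\al(g)(g\cdot\al(h)) = \al(gh)$, i.e.\ exactly when (\ref{cross1}) holds. Closure under inverses and the identity then come for free from (\ref{cross2}), which (as the definition already notes) is a formal consequence of (\ref{cross1}); conversely any subgroup structure forces these identities. This establishes the asserted form of $\LA$ and the correspondence with crossed homomorphisms.

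The remaining point, and the one I expect to be the genuine obstacle, is continuity of $\al$ under the stated hypotheses. When $G$ is discrete there is nothing to prove: $H$ is discrete, so every map $H\rtarr\PI$ is continuous. The compact case is where care is needed. Here $\LA$ is a closed subgroup of the compact group $\GA$, hence itself compact, and the projection $\LA\rtarr H$ is a continuous bijective homomorphism of compact Hausdorff topological groups, therefore a homeomorphism (a continuous bijection from a compact space to a Hausdorff space is a homeomorphism; $H$ is closed in $G$ hence compact Hausdorff). The map $\al\colon H\rtarr\PI$ then factors as the inverse homeomorphism $H\rtarr\LA$ followed by the continuous coordinate projection $\LA\hookrightarrow\GA = \PI\times G\rtarr\PI$, so $\al$ is continuous. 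I would flag that the compactness of $\GA$ is exactly what is used to invert the projection; without some such hypothesis the closed-subgroup data need not come from a continuous crossed homomorphism, which is why the lemma is stated with these restrictions.
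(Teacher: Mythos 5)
Your proposal is correct and follows essentially the same route as the paper: identify $H$ as the image of $\LA$ under the projection $\GA\rtarr G$, use $\LA\cap\PI=e$ to see that the restricted projection is a bijective homomorphism whose inverse (composed with projection to $\PI$) defines $\al$, verify the crossed homomorphism identity from the semidirect product law, and obtain continuity in the compact case from the fact that a continuous bijection of compact Hausdorff groups is a homeomorphism. No substantive differences from the paper's argument.
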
 
\begin{proof}
Clearly $\LA_{\al}$ is a subgroup of $\GA$ such that $\LA_{\al}\cap \PI = e$.  
Conversely, let $\LA\cap \PI=e$.  Define $H$ to
be the image of the composite of the inclusion $\io\colon \LA\subset \GA$ and the 
projection $\pi\colon \GA\rtarr G$. Since $\LA\cap \PI=e$, the composite $\pi\com \io$ is 
injective and so restricts to a continuous isomorphism $\nu\colon \LA\rtarr H$. 
For $h\in H$, define 
$\al(h)=\si$, where $\si$ is the unique element of $\PI$ such that $(\si, h)\in \LA$. 
Thus $\al$ is the composite of $\io\com \nu^{-1}\colon H\rtarr \GA$ and 
the projection $\rh\colon \GA\rtarr \Pi$. If $G$ is discrete or if $\GA$ and therefore 
$\LA$ is compact, then $\nu$ is a homeomorphism and $\al$ is continuous. For $h,k\in H$,
$$(\al(h),h)(\al(k),k)=(\al(h)(h\cdot \al(k)), hk) \in \LA, $$ so 
$\al(hk)= \al(h)(h\cdot \al(k))$. Thus $\al$ is a crossed homomorphism 
and $\LA=\LA_\al$.
\end{proof}

\begin{proof}[Proof of \myref{FixedUni}] We must obtain a $\LA$-map 
$f\colon G\rightarrow \Pi$, where $\LA=\LA_{\al}$ for a crossed 
homomorphism $\al$. By the definition of the action by $\LA$, this means 
that 
$$ f(g) = (h\cdot f(h^{-1}g))\al(h)^{-1} $$
or equivalently 
$$ h\cdot f(h^{-1}g) = f(g)\al(h) $$
for all $h\in H$ and $g\in G$. We choose right
coset representatives $\{g_i\}$ to write $G$ as a disjoint
union of cosets $Hg_i$. We then define $f:G\rtarr \Pi$ by
$$f(kg_i)=\al(k)^{-1}$$ for $k\in H$. 
By using (\ref{cross1}), writing out the inverse of a product as the product 
of inverses, using that $h^{-1}\cdot$ and $h\cdot$ are group homomorphisms
and that $\cdot$ is a group action, and finally using (\ref{cross2}) and, again, 
that $\cdot$ is a group action, we see that
\begin{eqnarray*}
h\cdot f(h^{-1}kg_i) & = & h\cdot \al(h^{-1}k)^{-1}\\
&=& h\cdot(\al(h^{-1}) (h^{-1}\cdot \al(k))^{-1}\\
&=& h\cdot ((h^{-1}\cdot\al(k))^{-1} (\al(h^{-1}))^{-1})\\
%&=& h\cdot((h^{-1}\cdot \al(k)^{-1}) (\al(h^{-1}))^{-1})\\
&=& (h\cdot (h^{-1}\cdot \al(k)^{-1})( h\cdot (\al(h^{-1})^{-1})\\
&=& \al(k)^{-1} (h\cdot (h^{-1}\cdot\al(h))) \\
&= & f(kg_i)\al(h). 
\end{eqnarray*}
for all $h\in H$. Thus $f$ is a $\LA$-map.  We have assumed that $G$ is
discrete in order to ensure that $f$ is continuous.
\end{proof}

\begin{rem} If we relax the condition that $G$ is discrete, we do not see how 
to prove that $f$ is continuous, as would be needed for a more general result.
\end{rem}

\subsection{The fixed point categories of $\Ch \PI)$}\label{Sec4a}

For $H\subset G$, the structure of the fixed point space $B(G,\PI_G)^H$ is 
known (up to homotopy), for example by specialization of more general results in  \cite{LM}. 
We show here how to see that structure on the category level.  In fact, we identify the fixed point 
categories  $\Ch \PI)^H$, with no restrictions on $\PI$ and $G$.
Since the functor $B$ commutes with fixed points, this gives a categorically precise 
interpretation of the fixed point space $B(G,\PI_G)^H$. 

We return to \S\ref{Sec2}, taking $X=G$ there. The $H$-fixed functors and $H$-natural transformations 
in $\Ch \PI)$ are the $H$-equivariant functors and natural
transformations, in accord with our notational convention
$\Ch \PI)^H = H\Ch \PI)$.
Since $\tilde{G}$ and $\tilde{H}$ are both $H$-free contractible categories, 
they are equivalent as $H$-categories. Therefore 
\begin{equation}\label{GHFix}
\Ch \PI)^H \simeq \sC\! at(\tilde{H},\PI)^H = H\sC\!at(\tilde{H},\PI).
\end{equation}

This implies that we may restrict to the case $G=H$ and deduce conclusions in general.
The objects and morphisms of $G\Ch \PI)$ are the $G$-equivariant 
functors $E: \tilde{G} \rightarrow \PI$ and the $G$-equivariant natural transformations
$\et$.  In \myref{Ob(X,P)}, we described a functor $E$ in terms of the map
$\al\colon G\rtarr \PI$ defined by $\al(h) = E(h,e)$.  

\begin{lem} The $G$-action on functors $E\colon \tilde{G}\rtarr \PI$ induces the 
$G$-action on maps $\al\colon G\rtarr \PI$ specified by 
\[ (g\al)(h) = (g\cdot (\al(g^{-1}h))(g\cdot \al(g^{-1})^{-1})). \]
\end{lem}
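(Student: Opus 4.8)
The plan is to push the conjugation action on functors across the bijection of \myref{Ob(X,P)} and simply read off what it does to the associated maps $\al$. Recall that, with $X=G$ and basepoint $\bp=e$, that lemma identifies the space of functors $E\colon\tilde{G}\rtarr\mathbf{\PI}$ with the space of maps $\al\colon G\rtarr\PI$ (necessarily with $\al(e)=e$, since $E(e,e)=e$) via $\al(h)=E(h,e)$, the inverse reconstruction being $E(y,x)=\al(y)\al(x)^{-1}$. So the claimed formula will fall out once I evaluate the functor $gE$ on the morphism $(h,e)$ of $\tilde{G}$ and re-express the result in terms of $\al$.

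First I would pin down the two group actions in play. On $\tilde{G}$ the $G$-action is the one set up in \S\ref{Sec3}: left translation on the object space $G$ together with the diagonal action on the morphism space $G\times G$, so that $g\cdot(y,x)=(gy,gx)$; note in particular that the basepoint $e$ is \emph{not} $G$-fixed, and $g^{-1}\cdot e=g^{-1}$. On $\sC\!at_G(\tilde{G},\mathbf{\PI})$ the action is conjugation (as in \S\ref{Sec1}), $(gE)(a)=g\cdot E(g^{-1}a)$ for $a$ an object or morphism, where on the target $\mathbf{\PI}$ the operator $g\cdot(-)$ denotes the prescribed $G$-action on $\PI$ by group automorphisms.

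The computation is then short. By the definition of the conjugation action and of the $G$-action on the morphism $(h,e)$ of $\tilde{G}$, one gets $(g\al)(h)=(gE)(h,e)=g\cdot E\bigl(g^{-1}\cdot(h,e)\bigr)=g\cdot E(g^{-1}h,\,g^{-1})$. Substituting the reconstruction formula $E(y,x)=\al(y)\al(x)^{-1}$ gives $g\cdot\bigl(\al(g^{-1}h)\,\al(g^{-1})^{-1}\bigr)$, and since $g\cdot(-)$ is a homomorphism of $\PI$ it distributes over the product, landing on $\bigl(g\cdot\al(g^{-1}h)\bigr)\bigl(g\cdot\al(g^{-1})^{-1}\bigr)$, which is exactly the displayed formula. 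Continuity of $g\al$ is automatic, being a composite of continuous maps, and there is no need to verify the left-action axioms separately since $g\al$ is transported from the genuine left action on functors.

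I do not expect any real obstacle here. The one point that needs care — and the reason the answer is not merely pointwise conjugation like $\al(h)\mapsto g\cdot\al(g^{-1}hg)$ — is that $\al$ is recovered from $E$ via the \emph{morphism} $(h,e)$, whose source is the non-fixed basepoint; the correction factor $g\cdot\al(g^{-1})^{-1}$ is precisely the contribution of $g^{-1}\cdot e=g^{-1}$. As a consistency check one can note that $(g\al)(e)=(g\cdot\al(g^{-1}))(g\cdot\al(g^{-1}))^{-1}=e$, matching the fact that the target of the bijection consists of maps sending $e$ to $e$, and that when $G$ acts trivially on $\PI$ the formula collapses to $(g\al)(h)=\al(g^{-1}h)\al(g^{-1})^{-1}$.
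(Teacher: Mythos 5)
Your proposal is correct and follows essentially the same route as the paper: evaluate $(gE)$ on the morphism $(h,e)$, use $g^{-1}\cdot(h,e)=(g^{-1}h,g^{-1})$, and factor $E(g^{-1}h,g^{-1})$ through the basepoint before distributing the automorphism $g\cdot(-)$ over the product. The paper's proof is just the one-line computation $(gE)(h,e)=g\cdot E(g^{-1}h,g^{-1})=g\cdot\bigl(E(g^{-1}h,e)E(e,g^{-1})\bigr)$, which is exactly your argument with the reconstruction formula written as the cocycle identity.
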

\begin{proof}
\[ (gE)(h,e) = g\cdot E(g^{-1}h,g^{-1}) = g\cdot (E(g^{-1}h,e)E(e,g^{-1})). \qedhere \]
\end{proof}

\begin{lem} The space of objects of $G\Ch \PI)$ can be identified
with the subspace of $\sU(G,\PI)$ consisting of the crossed anti-homomorphisms 
$\al\colon G\rtarr \PI$.
\end{lem}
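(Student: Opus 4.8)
The plan is to transport the assertion across the homeomorphism of \myref{Ob(X,P)} and then unwind what it means for the corresponding parameter $\al$ to be $G$-fixed. Taking $X=G$ with $\bp=e$, \myref{Ob(X,P)} gives a homeomorphism from the space of objects of $\sC\!at(\tilde G,\mathbf{\PI})$ to $\sO(G,\PI)$, sending $E_{\al}$ to the map $h\mapsto E(h,e)$; and by the preceding lemma this homeomorphism is $G$-equivariant when $G$ acts on $\sO(G,\PI)$ by $(g\al)(h)=(g\cdot\al(g^{-1}h))(g\cdot\al(g^{-1}))^{-1}$, where throughout one uses that $g\cdot(-)$ is an automorphism of $\PI$ and hence commutes with products and inverses. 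Restricting to $G$-fixed points, the space of objects of $\sC\!at(\tilde G,\mathbf{\PI})^G$ is homeomorphic to the subspace $\sO(G,\PI)^G$ of $\sU(G,\PI)$, so everything reduces to showing that $\sO(G,\PI)^G$ is exactly the set of crossed anti-homomorphisms $G\rtarr\PI$. Note first that any crossed anti-homomorphism satisfies $\al(e)=\al(e)\al(e)$ and hence $\al(e)=e$, so such maps already lie in $\sO(G,\PI)\subset\sU(G,\PI)$; this is why the statement can be phrased in terms of $\sU(G,\PI)$.

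For the inclusion $\subseteq$, suppose $g\al=\al$ for all $g$. Replacing $h$ by $gh$ in the fixed-point equation gives $\al(gh)=(g\cdot\al(h))(g\cdot\al(g^{-1}))^{-1}$ for all $g,h\in G$; setting $h=e$ and using $\al(e)=e$ yields $\al(g)=(g\cdot\al(g^{-1}))^{-1}$, and substituting this back gives $\al(gh)=(g\cdot\al(h))\al(g)$, which is (\ref{cross3}). For $\supseteq$, if $\al$ satisfies (\ref{cross3}) then $g=h=e$ gives $\al(e)=e$ and $h=g^{-1}$ gives $e=(g\cdot\al(g^{-1}))\al(g)$, hence $\al(g)=(g\cdot\al(g^{-1}))^{-1}$; feeding this into (\ref{cross3}) rewrites it as $\al(gh)=(g\cdot\al(h))(g\cdot\al(g^{-1}))^{-1}$, which, after the evident reindexing, is exactly $g\al=\al$. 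Thus $\sO(G,\PI)^G$ is precisely the space of crossed anti-homomorphisms, carrying the subspace topology from $\sU(G,\PI)$, as claimed.

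I do not expect a real obstacle here: the whole argument is bookkeeping with the twisted conjugation action. The one point to handle with care is the repeated use of the fact that each $g\cdot(-)$ is a group automorphism of $\PI$, together with keeping the normalization $\al(e)=e$ in view so that the crossed anti-homomorphisms are genuinely exhibited inside $\sO(G,\PI)$ and not merely inside $\sU(G,\PI)$. The topological half of the statement is automatic once the set-level identification is established, since fixed-point spaces and the homeomorphism of \myref{Ob(X,P)} all carry their evident subspace topologies.
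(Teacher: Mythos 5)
Your proof is correct and takes essentially the same route as the paper: both unwind the fixed-point condition $g\al=\al$ for the conjugation action on $\al(h)=E(h,e)$ and manipulate it into the crossed anti-homomorphism identity (\ref{cross3}). You are somewhat more careful than the paper in spelling out the reverse inclusion and the normalization $\al(e)=e$ that places crossed anti-homomorphisms inside $\sO(G,\PI)$, but these are refinements of the same argument, not a different one.
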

\begin{proof} Setting $g\al = \al$ and applying $g^{-1}\cdot (-)$ to the formula for
the action of $G$ on $\al$, we obtain
\[ g^{-1}\cdot \al(h) = \al(g^{-1}h)\al(g^{-1})^{-1}. \]
Replacing $g^{-1}$ by $g$ and multiplying on the right by $\al(g)$, this gives
$$ \al(gh) = (g\cdot \al(h))\al(g)$$ 
for all $g,h\in G$, which says that $\al$ is a crossed anti-homomorphism.
\end{proof}

Similarly, as in \myref{Mor(X,P)}, a natural transformation
$\eta \colon  E_{\al}\rtarr E_{\be}$ 
is determined by $\si = \eta(e)$. Explicitly,
$$\et(g) = E_{\be}(g,e)\et(e) E_{\al}(g,e)^{-1} = \be(g)\si \al(g)^{-1}$$
for $g\in G$. Now a $G$-fixed natural transformation $\et$ satisfies 
$\eta(gh) =g\cdot \eta(h)$ for $g,h \in G$ and thus 
$\eta(g)=\eta(ge)= g \cdot \eta(e) = g\cdot \si$. Therefore the naturality 
square for $G$-fixed natural transformations translates into 
$$ g\cdot \si = \be(g)\si \al(g)^{-1}$$
or equivalently 
\begin{equation}\label{natnat}
\be(g)\si = (g\cdot \si) \al(g).
\end{equation}

We use the following definitions and lemma to put things together.
 
\begin{defn}\mylabel{crossedhom2}
Let $G$ act on $\PI$. Define the crossed functor 
category $\sC at_\times (G,\PI )$ to be the category whose objects are the crossed 
homomorphisms $G\rtarr \PI$ and whose morphisms $\si\colon \al\rtarr \be$ are the elements 
$\si\in \PI$ such that $ \be(g)(g\cdot\si) = \si\al(g)$; they are
are called isomorphisms of crossed homomorphisms. The
composite $\ta\com\si$, $\ta\colon \be\rtarr \ga$ is given by $\ta\si$. Define 
the centralizer $\Pi^\al$ of a crossed homomorphism $\al\colon G\rtarr \Pi$ to
be the subgroup 
$$ \Pi^\al = \{\si \in \PI| \al(g) (g\cdot \si) = \si \al(g) \ \ \mbox{for all $g\in G$}\} $$
of $\PI$.  It is the automorphism group $\Aut(\al)$ of the object
$\al$ in $\sC at_\times (G,\PI )$.  
\end{defn}

\begin{defn} Define the anti-crossed functor
category $\sC at_\times ^{-}(G,\PI )$
to have objects the crossed anti-homomorphisms $\al\colon G\rtarr \PI$ and morphisms
$\si\colon \al\rtarr \be$ the elements $\si\in \PI$ such that 
$\be(g)\si = (g\cdot\si)\al(g)$, with $\tau\com \si = \ta\si$. The centralizer 
$\PI^{\al}$ of a crossed 
anti-homomorphism $\al\colon G\rtarr \PI$ is 
$$ \Pi^\al = \{\si \in \PI| \al(g)\si = (g\cdot \si) \al(g) \ \ \mbox{for all $g\in G$}\}. $$
Again, $\Pi^{\al} = \Aut(\al)$ in $\sC at_\times ^{-}(G,\PI )$.
\end{defn}

If the action of $G$ on $\PI $ is trivial, then the crossed functor 
category is just the functor category $\sC\! at(G,\PI )$ since homomorphisms 
$\al\colon G\rightarrow \PI$ correspond to functors $\al\colon G\rtarr \PI$ 
and elements $\si\in \PI$ such that $\be(g)\si = \si\al(g)$ for $g\in G$ correspond 
to natural transformations $\al\rtarr \be$. In that case, 
$$\PI^{\al} = \{\si \in \PI| \si^{-1}\al(g) \si =\al(g) \ \ \mbox{for all $g\in G$}\}$$
is the usual centralizer of $\al$ in $\PI$, and then the following identification is obvious.  

\begin{lem} The categories $\sC at_\times (G,\PI )$ and 
$\sC at_\times^{-} (G,\PI)$ of crossed homomorphisms
and crossed anti-homomorphisms are canonically isomorphic.
\end{lem}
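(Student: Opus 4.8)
The plan is to write the isomorphism down by hand. On objects I would send a crossed homomorphism $\al\colon G\rtarr \PI$ to its pointwise inverse $\al^{-1}$, defined by $\al^{-1}(g)=\al(g)^{-1}$, and similarly send a crossed anti-homomorphism to its pointwise inverse. The first thing to check is that this lands in the right category: applying inversion to (\ref{cross1}) and using that $g\cdot(-)$ is a group automorphism of $\PI$ gives
$\al(gh)^{-1} = (g\cdot\al(h))^{-1}\al(g)^{-1} = (g\cdot\al(h)^{-1})\al(g)^{-1}$,
which is exactly the identity (\ref{cross3}) for $\al^{-1}$; the same one-line manipulation run backwards shows pointwise inversion carries crossed anti-homomorphisms to crossed homomorphisms. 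Since $(\al^{-1})^{-1}=\al$ pointwise, the two object assignments are mutually inverse bijections, and since inversion in $\PI$ is continuous they are homeomorphisms of object spaces.

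On morphisms I would use the identity map of $\PI$: a morphism $\si\colon\al\rtarr\be$ of $\sC at_\times(\mathbf{G},\mathbf{\PI})$, namely an element $\si\in\PI$ with $\be(g)(g\cdot\si)=\si\al(g)$, is sent to the same $\si$, now regarded as a morphism $\al^{-1}\rtarr\be^{-1}$ of $\sC at_\times^{-}(\mathbf{G},\mathbf{\PI})$. The content is just to verify the defining relation transforms correctly: left-multiplying $\be(g)(g\cdot\si)=\si\al(g)$ by $\be(g)^{-1}$ and right-multiplying by $\al(g)^{-1}$ yields $(g\cdot\si)\al(g)^{-1}=\be(g)^{-1}\si$, which is precisely the condition $\be^{-1}(g)\si=(g\cdot\si)\al^{-1}(g)$ defining a morphism in $\sC at_\times^{-}(\mathbf{G},\mathbf{\PI})$. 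Because composition in both categories is multiplication in $\PI$ and the identity of every object is $e\in\PI$ in both, this assignment is a functor; it is an isomorphism with inverse the evidently analogous functor (again identity on morphisms, pointwise inversion on objects) in the other direction, and in the topological setting every map in sight is continuous.

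I do not expect a real obstacle here; the single point that deserves care is the left/right bookkeeping in the morphism relation — exactly the ``categorical dyslexia'' flagged earlier — so in the writeup I would display the two one-line rearrangements in full rather than assert them. I would also note, as a sanity check consistent with the remark preceding the lemma, that when $G$ acts trivially on $\PI$ this recovers the classical identification: $\al\mapsto\al^{-1}$ turns homomorphisms into anti-homomorphisms while leaving the condition $\be(g)\si=\si\al(g)$ on morphisms unchanged.
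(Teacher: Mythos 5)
Your proposal is correct and is essentially the paper's own proof: the paper's isomorphism sends $\al$ to $\bar{\al}(g)=g\cdot\al(g^{-1})$ and is the identity on morphisms, and by (\ref{cross2}) (together with its analogue for anti-homomorphisms) that object map coincides with your pointwise inversion $\al\mapsto\al^{-1}$. Your verifications --- inverting (\ref{cross1}) and rearranging the morphism relation --- are valid and, if anything, slightly more direct than the paper's computation with $\bar{\al}$.
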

\begin{proof}  For a crossed homomorphism $\al\colon G\rtarr \PI$, define 
$\bar{\al}\colon G\rtarr \PI$ by 
$$\bar{\al}(g) = g\cdot\al(g^{-1}).$$  Then
\[ \bar{\al}(gh) =  (gh)\cdot\al(h^{-1}g^{-1}) 
= g\cdot h\cdot(\al(h^{-1})(h^{-1}\cdot \al(g^{-1})) 
= (g\cdot \bar{\al}(h))(\bar{\al}(g)),\]
so that $\bar{\al}$ is a crossed anti-homomorphism.  If $\si$
is a morphism $\al\rtarr \be$ in $\sC at_\times (G,\PI )$, then 
$\be(g)(g\cdot\si) = \si\al(g)$. It follows that
\[ \bar\be(g)\si = (g\cdot \be(g^{-1}))\si
= g\cdot (\be(g^{-1})(g^{-1}\cdot\si)) = g\cdot (\si\al(g^{-1})) 
%= \si (g\cdot\al(g^{-1}) 
= (g\cdot\si)\bar{\al}(g),\]
so that $\si$ is also a morphism $\bar\al\rtarr \bar\be$ in 
$\sC at_\times^{-} (G,\PI )$. The construction of the inverse isomorphism is similar.
\end{proof}

Returning to the $G$-fixed category of interest, we summarize our discussion in 
terms of these definitions and results.

\begin{thm}
The fixed point category $G\Ch \PI) = \Ch \PI)^G$ is isomorphic to the
anti-crossed functor category $\sC\! at_\times^- (G, \PI)$. Therefore
it is also isomorphic to the crossed functor category 
$\sC\! at_\times (G, \PI)$.
\end{thm}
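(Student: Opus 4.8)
The plan is to assemble the claimed isomorphism from the explicit description of $\sC\!at(\tilde X,\mathbf{\PI})$ obtained in \S\ref{Sec2}, specialized to $X = G$ with basepoint $\bp = e$, together with the $G$-fixed point computations carried out above in this subsection. First I would recall from \myref{Ob(X,P)} that $E\mapsto \al$, where $\al(h) = E(h,e)$, is a homeomorphism from the space of functors $\tilde G\rtarr\mathbf{\PI}$ onto $\sO(G,\PI)$; passing to $G$-fixed points is passage to a subspace on both sides, and the lemma above identifies the $G$-fixed functors with precisely the crossed anti-homomorphisms $\al\colon G\rtarr\PI$. This gives a homeomorphism between the object space of $\sC\!at_G(\tilde G,\mathbf{\PI})^G$ and the object space of $\sC\!at_\times^-(\mathbf{G},\mathbf{\PI})$.

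Next I would treat morphisms the same way, using \myref{Mor(X,P)}: a natural transformation $\et\colon E_\al\rtarr E_\be$ is determined homeomorphically by $\si = \et(e)\in\PI$, and equation (\ref{natnat}) says $\et$ is $G$-fixed exactly when $\be(g)\si = (g\cdot\si)\al(g)$ for all $g$, which is the defining condition for a morphism $\si\colon\al\rtarr\be$ in $\sC\!at_\times^-(\mathbf{G},\mathbf{\PI})$. It then remains only to check that this pair of homeomorphisms respects the structure maps. Source and target match, since $\et_\si$ goes from $E_\al$ to $E_\be$ iff $\si$ goes from $\al$ to $\be$; the identity natural transformation of $E_\al$ has value $e$ at $e$, matching the identity of $\al$. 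For composition I would use that natural transformations into the one-object category $\mathbf{\PI}$ compose objectwise by the product of $\PI$, so that the composite of $\et_\si\colon E_\al\rtarr E_\be$ with $\et_\ta\colon E_\be\rtarr E_\ga$ has value $\ta\si$ at $e$, which is the composite $\ta\com\si = \ta\si$ in $\sC\!at_\times^-(\mathbf{G},\mathbf{\PI})$. Hence $\sC\!at_G(\tilde G,\mathbf{\PI})^G\iso\sC\!at_\times^-(\mathbf{G},\mathbf{\PI})$ as topological categories.

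The second assertion is then immediate from the lemma above giving the canonical isomorphism $\sC\!at_\times(\mathbf{G},\mathbf{\PI})\iso\sC\!at_\times^-(\mathbf{G},\mathbf{\PI})$, $\al\mapsto\bar\al$ with $\bar\al(g) = g\cdot\al(g^{-1})$. I do not expect a serious obstacle: all the real content was extracted in the lemmas of this subsection, and what is left is the bookkeeping of matching structure maps. The one thing to keep honest is the topology --- making sure the correspondences of \myref{Ob(X,P)} and \myref{Mor(X,P)} are homeomorphisms and that forming $G$-fixed points is passage to a subspace, so that the resulting bijections on objects and morphisms are continuous with continuous inverses rather than merely set-theoretic.
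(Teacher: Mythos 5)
Your proposal is correct and follows essentially the same route as the paper: the theorem there is explicitly presented as a summary of the preceding lemmas (the identification of $G$-fixed objects with crossed anti-homomorphisms, the condition (\ref{natnat}) on $G$-fixed natural transformations, and the isomorphism $\sC\!at_\times(\mathbf{G},\mathbf{\PI})\iso\sC\!at_\times^-(\mathbf{G},\mathbf{\PI})$), which is exactly what you assemble. Your added checks of the structure maps and of the topology are sound and merely make explicit what the paper leaves implicit.
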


\begin{cor}\mylabel{fixedcor} For $H\subset G$, the fixed point category 
$\Ch \PI)^H$ is equivalent to the anti-crossed functor category $\sC\! at_\times^- (H, \PI)$. 
Therefore it is also equivalent to the crossed functor category $\sC\! at_\times (H,\PI)$.
\end{cor}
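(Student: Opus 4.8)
The plan is to deduce the corollary directly from the case $H=G$ settled in the theorem immediately above, using the equivalence (\ref{GHFix}). First I would record that restricting the $G$-action on $\PI$ (and on $\tilde G$) to $H$ turns $\Ch \mathbf{\PI})$ into the $H$-category $\sC\!at_H(\tilde G,\mathbf{\PI})$, with the same underlying category and hence the same $H$-fixed subcategory; so the equivalence (\ref{GHFix}), which comes from the $H$-equivalence $\tilde H\simeq\tilde G$ of $H$-free contractible groupoids together with the fact that $\sC\!at_H(-,\mathbf{\PI})$ carries $H$-equivalences in its first variable to $H$-equivalences, identifies $\Ch \mathbf{\PI})^H$, up to equivalence of categories, with $\sC\!at_H(\tilde H,\mathbf{\PI})^H$.

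Next I would apply, with $H$ and its restricted action in the role of $G$, both the theorem above and the lemma identifying $\sC\!at_\times$ with $\sC\!at_\times^-$. Since every statement in \S\ref{Sec4a} is made with no hypothesis on the groups, the theorem gives an isomorphism $\sC\!at_H(\tilde H,\mathbf{\PI})^H \iso \sC\! at_\times^- (\mathbf{H}, \mathbf{\PI})$ and the lemma gives an isomorphism $\sC\! at_\times^- (\mathbf{H}, \mathbf{\PI}) \iso \sC\! at_\times (\mathbf{H},\mathbf{\PI})$. Composing,
\[ \Ch \mathbf{\PI})^H \ \simeq\ \sC\!at_H(\tilde H,\mathbf{\PI})^H \ \iso\ \sC\! at_\times^- (\mathbf{H}, \mathbf{\PI}) \ \iso\ \sC\! at_\times (\mathbf{H},\mathbf{\PI}), \]
which gives both assertions of the corollary; the word ``equivalent'' rather than ``isomorphic'' is forced only by the first arrow.

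Nothing in this argument is non-formal beyond what has already been proved. The one point I would be careful to state is that an $H$-equivalence of $H$-categories really does induce an equivalence of the associated $H$-fixed categories, which holds because an $H$-functor realizing the equivalence, together with the $H$-natural isomorphisms from its composites to the identities, restricts to the $H$-fixed subcategories; and that the constructions of \S\ref{Sec4a} apply verbatim to the pair $(H,\PI)$ with $\PI$ carrying the restricted $H$-action. I expect neither point to be an obstacle, so the ``proof'' is essentially just the composite displayed above.
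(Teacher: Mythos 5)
Your proposal is correct and follows exactly the route the paper intends: the corollary is deduced from the $G=H$ theorem via the equivalence (\ref{GHFix}), which the paper justifies by the $H$-equivalence of the $H$-free contractible groupoids $\tilde G$ and $\tilde H$. The extra care you take in noting that an $H$-equivalence of $H$-categories induces an equivalence of $H$-fixed categories, and that the analysis of \S\ref{Sec4a} applies verbatim to the pair $(H,\PI)$, merely makes explicit what the paper leaves implicit.
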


\begin{rem} The appearance of anti-homomorphisms in this context is not new;
see e.g. \cite{usenko}.  As we have seen, it is also innocuous. We have chosen
not to introduce opposite groups, but the anti-isomorphism 
$(-)^{-1}\colon \PI \rtarr \PI^{op}$ is relevant.
\end{rem} 

\subsection{Fixed point categories, $H^1(G;\PI_G)$, and Hilbert's Theorem 90}

Since $G\Ch \PI)$ is a groupoid, it is equivalent to the 
coproduct of its subcategories $Aut(\al)$, where we choose one $\al$ from
each isomorphism class of objects.  The following definition is standard when $\PI$
and $G$ are discrete but makes sense in general.

\begin{defn}\mylabel{crossedhom3} 
The first non-abelian 
cohomology group $H^1(G;\PI_G)$ is the pointed set of isomorphism 
classes of crossed homomorphisms $G\rtarr \PI$.  We write $[\al]$ for the 
isomorphism class of $\al$. The basepoint of $H^1(G;\PI_G)$ is $[\epz]$, 
where $\epz$ is the
trivial crossed homomorphism given by $\epz(g) = e$ for $g\in G$.
\end{defn}

With this language, (\ref{GHFix}) and \myref{fixedcor} can be restated as follows. 

\begin{thm}\mylabel{FFFxxx} For $H\subset G$, $\Ch \PI)^H$ 
is equivalent to the coproduct of the categories $\Aut(\al)$, where the coproduct runs over 
$[\al]\in H^1(H;\PI_H)$.
\end{thm}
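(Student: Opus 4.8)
The plan is to assemble Theorem~\ref{FFFxxx} from results already in place, treating it as essentially a repackaging statement. First I would invoke the equivalence \eqref{GHFix}, $\Ch \mathbf{\PI})^H \simeq \sC\!at_H(\tilde{H},\mathbf{\PI})^H$, which reduces the claim for a general subgroup $H$ to the case $G = H$; this equivalence is already justified by the fact that $\tilde{G}$ and $\tilde{H}$ are both $H$-free contractible $H$-categories, hence $H$-equivalent, and $\sC\!at_H(-,\mathbf{\PI})$ sends $H$-equivalences to equivalences. So it suffices to show $\sC\!at_G(\tilde{G},\mathbf{\PI})^G$ is equivalent to the coproduct of the $\mathbf{Aut\,\al}$ over $[\al]\in H^1(G;\PI_G)$.

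Next I would apply the theorem immediately preceding Definition~\ref{crossedhom3}, identifying $\sC\!at_G(\tilde{G},\mathbf{\PI})^G$ with the crossed functor category $\sC at_\times(\mathbf{G},\mathbf{\PI})$ (via the anti-crossed version and the canonical isomorphism between them). The key structural observation is then that $\sC at_\times(\mathbf{G},\mathbf{\PI})$ is a groupoid: every morphism $\si\colon\al\rtarr\be$ is an element $\si\in\PI$, and $\si^{-1}$ serves as an inverse morphism $\be\rtarr\al$ — one checks from $\be(g)(g\cdot\si) = \si\al(g)$ that $\al(g)(g\cdot\si^{-1}) = \si^{-1}\be(g)$, so composition with $\si$ is invertible. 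A groupoid is equivalent to the coproduct of the full subcategories on a set of representatives of its isomorphism classes of objects, and each such full subcategory on a single object $\al$ is just the one-object category $\mathbf{Aut\,\al}$, i.e.\ the group $\Pi^\al$ of Definition~\ref{crossedhom2} regarded as a category. By Definition~\ref{crossedhom3}, the isomorphism classes of objects of $\sC at_\times(\mathbf{G},\mathbf{\PI})$ are exactly the elements of the pointed set $H^1(G;\PI_G)$, so choosing one $\al$ per class indexes the coproduct by $H^1(G;\PI_G)$.

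Finally I would combine the two equivalences: $\sC\!at_G(\tilde{G},\mathbf{\PI})^H \simeq \sC at_\times(\mathbf{H},\mathbf{\PI}) \simeq \coprod_{[\al]\in H^1(H;\PI_H)} \mathbf{Aut\,\al}$, which is the assertion. I would note that one can equally run the argument through the anti-crossed category $\sC\!at_\times^-(\mathbf{H},\mathbf{\PI})$, since Corollary~\ref{fixedcor} gives both forms and the two are canonically isomorphic; the centralizer notation $\Pi^\al$ is the same in both pictures.

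The main obstacle is not really a deep one — it is essentially bookkeeping — but the point requiring genuine care is the passage from a groupoid to the coproduct of automorphism groups in the \emph{topological} setting. For discrete $G$ and $\PI$ this decomposition is literal; in the general topological case one should be careful about what "equivalence" means and about the topology on the set of isomorphism classes, which may not be discrete. Since the paper's headline hypotheses confine the substantive claims to $G$ discrete with $\PI$ discrete or compact Lie, and since this particular theorem is stated with "no restrictions" only for the categorical (pre-classifying-space) assertion, I would be explicit that the equivalence here is an equivalence of topological categories in which the coproduct is taken over a chosen set of representatives, and that passage to $B$ then yields the fixed-point space $B(G,\PI_G)^H$ as the corresponding disjoint union of classifying spaces $B(\Aut\,\al)$ — the statement that feeds into Theorem~\ref{BIG}.
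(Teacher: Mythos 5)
Your proposal is correct and follows essentially the same route as the paper: the paper likewise combines the reduction (\ref{GHFix}) to the case $G=H$, the identification of $\sC\!at_G(\tilde{G},\mathbf{\PI})^G$ with the (anti-)crossed functor category from \S\ref{Sec4a}, and the standard decomposition of a groupoid as the coproduct of the automorphism groups $\mathbf{Aut\,\al}$ of representatives of its isomorphism classes, which are by \myref{crossedhom3} exactly the elements of $H^1(H;\PI_H)$. Your added caution about what ``equivalence'' and the coproduct mean topologically is a reasonable refinement but not a departure from the paper's argument.
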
  

Here $\Aut(\al)$ implicitly refers to the ambient group $\PI\rtimes H$, not 
$\GA = \PI\rtimes G$.  By (\ref{GHFix}) or, more concretely, \myref{finlem2} below,
we obtain the same group $\Aut(\al)$ for $\al$ considered as an object of 
$\sC\!at(\tilde K,\PI)^H$ for any $H\subset K\subset G$. 

For any $G$-category $\sA$, we have a natural map of $G$-categories
\[ \io\colon \sA\rtarr \Ch \sA). \]
It is induced by the the unique $G$-functor $\tilde{G}\rtarr \ast$,
where $\ast$ is the trivial $G$-category with one object and its identity morphism.
The $G$-fixed point functor $\io^G$ played a central role in Thomason's paper \cite{Thom}.
When $\sA = \PI$ for a $G$-group $\PI$, $\io$ sends the unique object of 
$\PI$ to the basepoint $[\epz]\in H^1(G;\PI)$.   

We shall describe the groups $\Aut(\al)$ in familiar group theoretic terms in
the next section.  As a special case, $\Aut(\epz) = \PI^G$ and $\io^G$ restricts 
to the identity functor from $\PI^G$ to $Aut(\epz)$.  This implies the following result.

\begin{prop} The functor $\io^G\colon \PI^G \rtarr \Ch \PI)^G$
is an equivalence of categories if and only if
$H^1(G;\PI_G) = [\epz]$.
\end{prop}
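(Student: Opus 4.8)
The plan is to reduce the statement to the standard criterion that a functor is an equivalence of categories precisely when it is fully faithful and essentially surjective, and then to observe that for $\io^G$ the first condition is automatic while the second is exactly the cohomological hypothesis.

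First I would record that $\io^G$ is fully faithful with no restriction on $G$ or $\PI$. The category $\mathbf{\PI}^G$ has a single object $\ast$ with endomorphism group $\mathbf{\PI}^G(\ast,\ast)=\PI^G$, and $\io^G$ carries $\ast$ to the trivial crossed homomorphism $\epz$, regarded as an object of $\sC\!at_G(\tilde G,\mathbf{\PI})^G$. The only hom-set to examine is therefore the map $\PI^G=\mathbf{\PI}^G(\ast,\ast)\rtarr \sC\!at_G(\tilde G,\mathbf{\PI})^G(\epz,\epz)=\Aut\,\epz$, and this is the identity homomorphism $\PI^G=\Aut\,\epz$ noted just before the statement; in particular it is a bijection, so $\io^G$ is fully faithful.

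Next I would analyze essential surjectivity. Since the image of $\io^G$ on objects is the single object $\epz$, the functor $\io^G$ is essentially surjective if and only if every object of $\sC\!at_G(\tilde G,\mathbf{\PI})^G$ is isomorphic to $\epz$. Using the identification of the fixed point category with the (anti-)crossed functor category and the resulting description of its isomorphism classes as the pointed set $H^1(G;\PI_G)$ with basepoint $[\epz]$ (Definition \myref{crossedhom3}, together with \myref{FFFxxx} in the case $H=G$), this holds exactly when $H^1(G;\PI_G)$ consists of the single class $[\epz]$. Combining the two steps gives the claim: $\io^G$ is an equivalence if and only if it is essentially surjective, if and only if $H^1(G;\PI_G)=[\epz]$.

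There is no analytic or combinatorial obstacle here: the content has been front-loaded into \myref{FFFxxx} and the identifications preceding it, so the argument is a short formal unwinding. The only point that deserves a moment's care — and the one I would state most cleanly — is the bookkeeping of exactly which object $\io^G$ hits (the trivial crossed homomorphism $\epz$ itself, not merely its class) together with the passage between crossed and anti-crossed homomorphisms, both of which are handled by the canonical isomorphisms already established.
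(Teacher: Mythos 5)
Your proposal is correct and follows essentially the same route as the paper: the paper's (largely implicit) argument is precisely that $\io^G$ restricts to the identity $\mathbf{\PI^G}\rtarr \mathbf{Aut}\,\epz$, so by \myref{FFFxxx} it is an equivalence exactly when the coproduct over $H^1(G;\PI_G)$ has the single term $[\epz]$. Your explicit split into full faithfulness and essential surjectivity is just a cleaner unwinding of the same observation.
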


\begin{exmp}\mylabel{Serre} Let $E$ be a Galois extension of a field $F$ with Galois group $G$.
Then $G$ acts on $E$ and $E^G = F$.  Let $G$ act entrywise on $GL(n,E)$.  Then
Serre's general version of Hilbert's Theorem 90 \cite[Ch 10, Prop. 3]{Serre} gives 
that $H^1(G;GL(n,E)_G) = [\epz]$. Since $GL(n,E)^G = GL(n,F)$, we conclude that 
$\io^G$ is an equivalence of categories
\[  GL(n,F)\rtarr \Ch GL(n,E))^G. \]
More generally, for $H\subset G$, $\io^H$ is an equivalence of categories
\[  GL(n,E^H)\rtarr \Ch GL(n,E)^H. \]
\end{exmp}

As explained in \cite[\S4.5]{GM} this gives precisely the information 
that ensures that the algebraic $K$-theory fixed point spectrum $\bK_G(E)^H$ is 
equivalent to  $\bK(E^H)$.  We shall return to consideration of $G$-rings 
such as $E$ in \S6.

We recall the easy calculation of $H^1(G;\PI)$ in group theoretic terms.

\begin{lem}\mylabel{finlem1}  The set $H^1(G;\PI)$ is in
bijective correspondence with the set of $\PI$-conjugacy classes of 
subgroups $\Lambda$ of $\GA$ such that $\Lambda \cap \Pi=e$ and 
$q(\Lambda)=G$.
\end{lem}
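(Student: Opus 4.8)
The plan is to reduce everything to \myref{crosssubgp} together with a one-line computation of the effect of $\PI$-conjugation on the subgroups $\LA_\al$. First I would recall from \myref{crosssubgp} that a subgroup $\LA\subseteq\GA$ with $\LA\cap\PI=e$ is exactly one of the form $\LA_\al=\{(\al(h),h)\mid h\in H\}$ for a subgroup $H\subseteq G$ and a crossed homomorphism $\al\colon H\rtarr\PI$. Since $q(\LA_\al)$ is visibly equal to $H$, imposing the extra condition $q(\LA)=G$ singles out precisely the case $H=G$. Hence $\LA_\al\leftrightarrow\al$ is a bijection between the set of subgroups $\LA\subseteq\GA$ with $\LA\cap\PI=e$ and $q(\LA)=G$ and the set of crossed homomorphisms $G\rtarr\PI$.

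Next I would compute conjugation by $(\si,e)\in\PI=\PI\times e$. Using the product $(\si,g)(\ta,h)=(\si(g\cdot\ta),gh)$ of $\GA$ and the identity $h\cdot\si^{-1}=(h\cdot\si)^{-1}$, one finds
\[ (\si,e)\,(\al(h),h)\,(\si,e)^{-1}=\bigl(\si\,\al(h)\,(h\cdot\si)^{-1},\,h\bigr), \]
so that $(\si,e)\LA_\al(\si,e)^{-1}=\LA_{\si\cdot\al}$, where $(\si\cdot\al)(h)=\si\,\al(h)\,(h\cdot\si)^{-1}$. In particular $\si\cdot\al$ is again a crossed homomorphism and $\PI$-conjugation preserves the set of subgroups in play, so the bijection of the previous paragraph carries $\PI$-conjugacy of subgroups to the equivalence relation $\al\sim\be$ given by ``$\be=\si\cdot\al$ for some $\si\in\PI$''.

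Finally I would match this equivalence relation with the isomorphism relation of \myref{crossedhom2}: by definition $\si\in\PI$ is an isomorphism $\al\rtarr\be$ of crossed homomorphisms precisely when $\be(g)(g\cdot\si)=\si\al(g)$, i.e. $\be(g)=\si\,\al(g)\,(g\cdot\si)^{-1}=(\si\cdot\al)(g)$. Thus $\LA_\al$ and $\LA_\be$ are $\PI$-conjugate if and only if $\al$ and $\be$ are isomorphic objects of $\sC at_\times(\mathbf{G},\mathbf{\PI})$, which by \myref{crossedhom3} is exactly the statement that $[\al]=[\be]$ in $H^1(G;\PI_G)$. Passing to equivalence classes therefore turns the bijection of the first step into the asserted bijection; as a check, the diagonal subgroup $e\times G=\LA_\epz$ corresponds to the basepoint $[\epz]$.

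The only point needing care --- and the main (mild) obstacle --- is the bookkeeping in the conjugation identity: keeping the semidirect-product multiplication and the left action $\cdot$ of $G$ on $\PI$ straight, and checking that the resulting twisting formula for $\si\cdot\al$ is literally the one occurring in the morphism condition of \myref{crossedhom2} rather than its inverse or its ``anti-'' variant. Once that line is verified, the rest is purely formal.
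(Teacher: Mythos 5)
Your proposal is correct and follows essentially the same route as the paper: both reduce to \myref{crosssubgp} to identify the subgroups with $\LA\cap\PI=e$ and $q(\LA)=G$ with crossed homomorphisms $G\rtarr\PI$, and both check that conjugation by $\si\in\PI$ sends $\LA_\al$ to $\LA_\be$ exactly when $\be(g)(g\cdot\si)=\si\al(g)$, i.e.\ when $\si$ is an isomorphism $\al\rtarr\be$ in $\sC at_\times(\mathbf{G},\mathbf{\PI})$. Your version merely writes out the semidirect-product conjugation computation that the paper leaves implicit, and correctly cites \myref{crosssubgp} where the paper's proof nominally points to \myref{FixedUni}.
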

\begin{proof}
By \myref{FixedUni}, the subgroups $\Lambda$ of $\GA$ such that 
$\Lambda\cap \Pi=e$ are of the form
$$\Lambda_\al= \{(\al(h), h)| h\in H\}$$ 
for a crossed homomorphism $\al\colon H\rightarrow \Pi$. 
If $\si\in \PI$, then $\si \LA_{\al}\si^{-1}\cap \PI = e$ and therefore 
$\si \LA_{\al}\si^{-1} = \LA_{\be}$ for some crossed homomorphism $\be$.
The equality forces $\be$ and $\al$ to be defined on the same subgroup $H$ 
and to satisfy 
$\be(g)(g\cdot\si) = \si\al(g)$.
We are concerned only with the case $H=G$, and then this says that
$\si$ is a morphism and thus an
isomorphism $\al\rtarr \be$ in $\sC\!at_{\times}(G,\PI)$.
\end{proof}

\subsection{The fixed point spaces of $B(G,\PI_G)$}

We here identify the automorphism groups $\Aut(\al)$ group theoretically 
and so complete the identification of $\Ch \PI)^G$.

\begin{lem}\mylabel{finlem2}  Let $\al\colon H\rtarr \PI$ be a crossed homomorphism
and $\PI$ be a $G$-group, where $H\subset G$. Then the crossed centralizer $\Pi^{\al}$ is the 
intersection $\PI\cap N_\GA \Lambda_\al$.  Therefore this intersection is the
same for all $\GA_K = \PI\rtimes K$, $H\subset K\subset G$. 
\end{lem}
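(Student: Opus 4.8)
The plan is to unwind both sides of the claimed equality $\PI^{\al} = \PI \cap N_{\GA}\Lambda_{\al}$ directly in terms of elements, using the explicit description $\Lambda_{\al} = \{(\al(h),h) \mid h\in H\}$ from \myref{crosssubgp}. First I would take $\si \in \PI$, regard it as the element $(\si,e)\in\GA$, and compute the conjugate $(\si,e)(\al(h),h)(\si,e)^{-1}$ using the product in $\GA$ and the twisted commutation relation. The multiplication rule $(\si,g)(\ta,h) = (\si(g\cdot\ta),gh)$ gives $(\si,e)(\al(h),h) = (\si\al(h),h)$ and $(\si,e)^{-1} = (\si^{-1},e)$, so that
\[ (\si,e)(\al(h),h)(\si^{-1},e) = (\si\al(h),h)(\si^{-1},e) = (\si\al(h)(h\cdot\si^{-1}),h). \]
Thus $\si$ normalizes $\Lambda_{\al}$ exactly when $\si\al(h)(h\cdot\si^{-1}) = \al(h)$ for all $h\in H$ — here I use that conjugation by $\si$ preserves the projection to $G$, so $(\si,e)\Lambda_{\al}(\si^{-1},e)$ is again a subgroup of the form $\Lambda_{\be}$ with $\be$ defined on the same $H$, and normalizing means $\be = \al$. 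Rearranging $\si\al(h)(h\cdot\si^{-1}) = \al(h)$ to $\si\al(h) = \al(h)(h\cdot\si)$ — multiply on the right by $(h\cdot\si)$ and use that $h\cdot(-)$ is a homomorphism so $(h\cdot\si^{-1})(h\cdot\si) = e$ — we recover precisely the defining condition $\al(h)(h\cdot\si) = \si\al(h)$ for $\si\in\PI^{\al}$ from \myref{crossedhom2}. This gives both inclusions simultaneously, hence $\PI^{\al} = \PI\cap N_{\GA}\Lambda_{\al}$.

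For the final sentence, the point is that the condition $\al(h)(h\cdot\si) = \si\al(h)$ for all $h\in H$ references only $H$, the group $\PI$, and the action of $H$ on $\PI$ (which is the restriction of the $G$-action). None of this changes when we enlarge the ambient semidirect product from $\PI\rtimes H$ to $\PI\rtimes K$ for any intermediate $H\subset K\subset G$: the subgroup $\Lambda_{\al}$ sits inside $\GA_K$ the same way, and the equation defining $\PI^{\al}$ is unchanged. So $\PI\cap N_{\GA_K}\Lambda_{\al} = \PI^{\al}$ independently of $K$. I would state this as an immediate consequence of the elementwise computation above, noting that the normalizer is computed relative to whichever $\GA_K$ one uses but the intersection with $\PI$ is always $\PI^{\al}$.

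I do not expect a serious obstacle here; the only point requiring a moment's care is the observation that conjugating $\Lambda_{\al}$ by an element of $\PI$ cannot change the subgroup $H\subset G$ on which the crossed homomorphism is defined (because conjugation by $(\si,e)$ commutes with the projection $\GA\rtarr G$), so that ``$\si$ normalizes $\Lambda_{\al}$'' is genuinely equivalent to the pointwise equation rather than merely to $\si\Lambda_{\al}\si^{-1}$ being \emph{some} $\Lambda_{\be}$. Once that is noted, everything is a routine manipulation of the semidirect product multiplication and the identities \eqref{cross1}--\eqref{cross2}.
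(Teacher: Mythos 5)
Your proposal is correct and follows essentially the same route as the paper: an elementwise computation of the conjugate of $(\al(h),h)$ in the semidirect product, reduced to the defining equation $\si\al(h) = \al(h)(h\cdot\si)$ of $\PI^{\al}$. The only cosmetic differences are that the paper conjugates a general element $(\si,g)$ (recording the normalizer condition for arbitrary $g$ before specializing to $g=e$) and uses $(\si,g)^{-1}(\al(h),h)(\si,g)$ rather than your $(\si,e)(\al(h),h)(\si,e)^{-1}$, neither of which affects the argument.
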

\begin{proof}Let $(\pi,g)\in \PI\rtimes G$ and $h\in H$. Calculating in 
$\GA = \PI\rtimes G$, we have
\begin{eqnarray*}
(\si,g)^{-1}(\al(h),h)(\si,g)
&=& (g^{-1}\cdot \si^{-1}, g^{-1})(\al(h),h)(\si,g) \\
&=& ((g^{-1}\cdot \si^{-1})( g^{-1}\cdot \al(h)) , g^{-1}h)(\si,g) \\ 
&=& ((g^{-1}\cdot \si^{-1})(g^{-1}\cdot \al(h))((g^{-1}h)\cdot \si), g^{-1}hg).
\end{eqnarray*}
Therefore $(\si,g)$ is in $N_{\GA}\LA_{\al}$ if and only if $g$ is in $N_GH$ and
$$ \al(g^{-1}hg) = (g^{-1}\cdot \si^{-1})(g^{-1}\cdot \al(h))((g^{-1}h)\cdot \si)$$
for all $h\in H$. 
%Note that the left side of this equation can be exanded as 
%$$ \al(g^{-1}hg) =  \al(g^{-1})(g^{-1}\cdot \al(h))((g^{-1}h)\cdot \al(g)).$$
When $g=e$, so that $\si = (\si,e)$ is a typical element of $\PI\cap N_{\GA}\LA_{\al}$,
this simplifies to
$$\al(h) = \si^{-1}\al(h)(h \cdot \si). \qedhere$$
\end{proof}

Passing to classifying spaces from \myref{FFFxxx} gives the following result.

\begin{thm}\mylabel{FixFix} For $H\subset G$,
\[ B(G,\PI_G)^H = B\Ch \PI)^H \htp \coprod B\Aut(\al), \]
where the coproduct runs over $[\al]\in H^1(H;\PI_H).$ 
\end{thm}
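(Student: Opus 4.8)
The plan is to deduce \myref{FixFix} from \myref{FFFxxx} by simply applying the classifying space functor $B = |N-|$ and using the elementary properties of $B$ already recalled in the introduction. First I would invoke \myref{FFFxxx}, which identifies the fixed point category $\sC\!at_G(\tilde G,\mathbf{\PI})^H$ with the coproduct of groupoids $\coprod_{[\al]} \mathbf{Aut}\,\al$ over $[\al]\in H^1(H;\PI_H)$. Then I would use the fact, stated in \S\ref{Secnerve}, that $N$ is a right adjoint and hence $N(\sC^H)\iso (N\sC)^H$; applying geometric realization $|-|$, which commutes with finite limits (in particular with $H$-fixed points for finite $H$, and more generally — here one should note the same minor point-set niceness used elsewhere in the paper) gives $B(\sC^H)\iso (B\sC)^H$. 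Combined with \myref{Main}, which gives the homeomorphism $B(G,\PI_G)\iso B\sC\!at_G(\tilde G,\mathbf{\PI})$ of $G$-spaces, this yields the first identification $B(G,\PI_G)^H = B\sC\!at_G(\tilde G,\mathbf{\PI})^H$.

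Next I would note that $B$ carries a coproduct of categories to a disjoint union of spaces, since $N$ preserves coproducts at the level of simplicial spaces and $|-|$ is a left adjoint, hence preserves coproducts; therefore
\[ B\left(\coprod_{[\al]}\mathbf{Aut}\,\al\right) \iso \coprod_{[\al]} B\,\mathbf{Aut}\,\al. \]
Finally, $B\,\mathbf{Aut}\,\al = B(\Aut\al)$ is the usual classifying space of the group $\Aut\al$, by the identification of $\mathbf{\PI}$-style one-object categories with their groups (as in \myref{psqs} and the discussion around \myref{simple}). Stringing these together gives
\[ B(G,\PI_G)^H \htp \coprod_{[\al]\in H^1(H;\PI_H)} B\Aut\al, \]
which is the assertion. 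The equivalence of categories in \myref{FFFxxx} only gives a homotopy equivalence after applying $B$, which is why the statement uses $\htp$ rather than a homeomorphism; this is harmless since $B$ takes equivalences of categories to homotopy equivalences.

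The only real subtlety — and the step I expect to be the main obstacle — is the commutation of $|-|$ with passage to $H$-fixed points, i.e. justifying $B(\sC^H)\iso (B\sC)^H$. Unlike passage to orbits, which the introduction flags as the genuinely delicate operation (and which is handled separately via \myref{notformal} and \myref{Main}), fixed points commute with $N$ automatically since $N$ is a right adjoint, and they commute with geometric realization under the mild hypotheses in force (the relevant simplicial spaces are proper/Reedy cofibrant, or one restricts attention to the discrete $G$ case that is our primary interest). I would remark on this point explicitly but not belabor it, since the paper has already set up the requisite framework; everything else in the proof is formal manipulation of functors that have been shown to be product- and colimit-preserving as needed.
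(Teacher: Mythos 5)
Your proposal is correct and follows essentially the same route as the paper, which simply passes to classifying spaces from \myref{FFFxxx} using the facts (recorded in \S\ref{Sec4a} and \S\ref{Secnerve}) that $B$ commutes with fixed points, preserves coproducts, and sends equivalences of categories to homotopy equivalences, together with the identification $B(G,\PI_G)\iso B\sC\!at_G(\tilde G,\mathbf{\PI})$ from \myref{Main}. Your extra care about why realization commutes with $H$-fixed points is a reasonable elaboration of a point the paper takes for granted.
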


By Lemmas \ref{finlem1} and \ref{finlem2}, we can restate \myref{FixFix} as follows. 

\begin{thm}\mylabel{FixToo}
Let $\GA=\Pi \rtimes G$. For a subgroup $H$ of $G$,
$$B(G,\PI_G)^H \htp \coprod B(\Pi\cap N_{\GA}\LA),$$
where the union runs over the $\Pi$-conjugacy classes of subgroups 
$\Lambda$ of $\GA$ such that $\Lambda \cap \Pi=e$ and $q(\Lambda)=H$.
\end{thm}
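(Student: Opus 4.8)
The plan is to deduce \myref{FixToo} from \myref{FixFix} by rewriting the indexing set of the coproduct and the identity of its summands in the group theory of $\GA$, using Lemmas \ref{finlem1} and \ref{finlem2}; no new homotopical input is required, since the homotopy equivalence already comes from \myref{FixFix} and we only reindex and rename.

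First I would reindex the coproduct. Instantiating \myref{finlem1} with the pair $(H,\PI_H)$ in place of $(G,\PI_G)$ --- which is legitimate, $\PI$ being an $H$-group by restriction --- gives a bijection between $H^1(H;\PI_H)$ and the set of $\PI$-conjugacy classes of subgroups $\LA$ of $\GA_H := \PI\rtimes H$ with $\LA\cap\PI = e$ and $q(\LA) = H$, sending $[\al]$ to the class of $\LA_\al = \{(\al(h),h)\mid h\in H\}$. I would then note that a subgroup $\LA$ of $\GA$ satisfies $q(\LA) = H$ precisely when $\LA\subseteq q^{-1}(H) = \PI\rtimes H = \GA_H$, so that the subgroups $\LA$ of $\GA$ with $\LA\cap\PI = e$ and $q(\LA) = H$ coincide with the subgroups of $\GA_H$ having those two properties; and since $\PI\subseteq\GA_H$, $\PI$-conjugacy of such subgroups is the same whether taken inside $\GA_H$ or inside $\GA$. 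Hence the indexing set of the coproduct in \myref{FixFix} is canonically the set of $\PI$-conjugacy classes of subgroups $\LA$ of $\GA$ with $\LA\cap\PI = e$ and $q(\LA) = H$ --- the indexing set named in \myref{FixToo}.

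It remains to identify the summands. For a crossed homomorphism $\al\colon H\rtarr\PI$ representing such a class, \myref{FixFix} contributes $B\Aut\,\al$, where $\Aut\,\al = \PI^\al$ is the crossed centralizer of $\al$. By \myref{finlem2}, $\PI^\al = \PI\cap N_\GA\LA_\al$, and moreover this intersection is unchanged if $\GA$ is replaced by any intermediate $\PI\rtimes K$ with $H\subseteq K\subseteq G$, which is what reconciles it with the description of $\Aut\,\al$ inside $\PI\rtimes H$ coming out of \myref{FixFix}. Substituting into the equivalence of \myref{FixFix} then gives
$$ B(G,\PI_G)^H \htp \coprod B(\PI\cap N_\GA\LA), $$
with the coproduct indexed as claimed. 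The only step that is more than mechanical is this last identification of $\Aut\,\al$ --- the passage from an automorphism group intrinsic to the subgroup $\PI\rtimes H$ to a normalizer in the full group $\GA$ --- and that is exactly the content of \myref{finlem2}; once that lemma is granted, the remainder is a routine translation through Lemmas \ref{finlem1} and \ref{finlem2}.
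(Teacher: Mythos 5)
Your proposal is correct and follows the paper's own route exactly: the paper likewise obtains \myref{FixToo} by restating \myref{FixFix}, using \myref{finlem1} to reindex the coproduct by $\PI$-conjugacy classes of subgroups $\LA$ with $\LA\cap\PI=e$ and $q(\LA)=H$, and \myref{finlem2} to identify $\Aut\,\al = \PI^{\al}$ with $\PI\cap N_{\GA}\LA_{\al}$ independently of the ambient $\PI\rtimes K$. Your write-up is in fact more explicit than the paper's one-line deduction about why conjugacy and the normalizer intersection are insensitive to the choice of ambient group, which is exactly the right point to be careful about.
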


Of course, we are only entitled to consider $B(G,\PI_G)$ as a classifying
space for principal $\GA$-bundles when \myref{Main} applies.  The fixed point 
spaces $B(\Pi; \Gamma)^H$ of classifying spaces are studied more generally in 
\cite{LM} when $\GA$ is given by a not necessarily split extension of compact Lie 
groups 
\begin{equation}\label{nonsplit}
\xymatrix@1{
1\ar[r] & \Pi \ar[r] & \Gamma \ar[r]^-{q} &  G \ar[r] & 1.}\\
\end{equation} 
For such groups $\GA$, \cite[Theorem 10]{LM} gives an entirely different
bundle theoretic proof that the conclusion of \myref{FixToo} still holds, exactly
as stated. When \cite{LM} was written, no particularly nice model for the 
homotopy type $B(\PI;\GA)$ was known.

\section{The comparison between $B\Ch \PI)$ and $\sU(EG,B\PI)$}\label{Sec5}
A convenient model $p\colon E(\PI;\GA) \rtarr B(\PI;\GA)$ for a universal principal 
$(\PI;\GA)$-bundle was later given in terms of mapping spaces \cite{May}. 
Start with the classical models in \S\ref{Secnerve} for universal principal $\PI$, $G$, 
and $\GA$-bundles and let $Eq\colon E\GA\rtarr EG$ be the map induced by the quotient
homomorphism $q\colon \GA\rtarr G$.  Let $\Sec(EG,E\GA)$ denote the $\GA$-space of 
sections $f\colon EG\rtarr E\GA$, so that $Eq\com f = \id$.  The following result 
is part of \cite[Theorem 5]{May}.

\begin{thm}\mylabel{Mayone} The quotient map 
$p\colon \Sec(EG,E\GA)\rtarr \Sec(EG,E\GA)/\PI$
is a universal principal $(\PI;\GA)$-bundle.
\end{thm}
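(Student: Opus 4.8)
The plan is to verify that $p$ satisfies the recognition criterion for universal principal $(\PI;\GA)$-bundles --- the analogue for not necessarily split extensions of \myref{LaMay}, proved in \cite{LM} (see also \cite[\S 2]{La}): modulo the usual point-set hypotheses (numerability of the base, complete regularity), a numerable principal $(\PI;\GA)$-bundle is universal as soon as the fixed point space of its total space under every (closed) subgroup $\LA$ of $\GA$ with $\LA\cap\PI = e$ is contractible. So there are two tasks: exhibit the bundle structure on $p$, and compute the fixed point spaces of $\Sec(EG,E\GA)$.

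First I would record the bundle structure. Give $\Sec(EG,E\GA)$ the conjugation action of $\GA$, $(\ga f)(e) = \ga f(q(\ga)^{-1}e)$; since $Eq$ is $\GA$-equivariant (with $\GA$ acting on $EG$ through $q$) and $Eq\com f = \id$, the map $\ga f$ is again a section. Restricted to $\PI = \ker q$ this is the pointwise action $(\si f)(e) = \si f(e)$, and it is free because $\PI$ acts freely on the total space $E\GA$ of the universal $\GA$-bundle. Hence $p$ is the orbit projection onto $\Sec(EG,E\GA)/\PI$, which carries the residual $G = \GA/\PI$ action and is $\GA$-equivariant through $q$. Moreover $q\colon\GA\rtarr G$ is a fibration, so the induced map $Eq\colon E\GA\rtarr EG$ on realizations of the (proper) bar constructions is a Hurewicz fibration, and its fibre is contractible since $E\GA$ and $EG$ are; so $Eq$ has local sections over $EG$, which together with freeness of the $\PI$-action exhibit $p$ as a numerable principal $\PI$-bundle over the paracompact base --- granting the point-set niceness of $E\GA$ and $EG$ as bar constructions of well-pointed groups under the hypotheses of \cite{May} (cf. \cite[Props. 4, 5]{LM}). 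Thus $p$ is a numerable principal $(\PI;\GA)$-bundle.

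The crux is the fixed point computation. Fix a (closed) subgroup $\LA\le\GA$ with $\LA\cap\PI = e$. As in the proof of \myref{crosssubgp}, $q$ restricts to an isomorphism of $\LA$ onto $H := q(\LA)\le G$; hence, when $EG$ is regarded as a $\LA$-space via $q|_{\LA}$ and $E\GA$ as a $\LA$-space by restriction of the $\GA$-action, both become \emph{free} (restrictions of free actions) and \emph{contractible}, so each is a model for $E\LA$. A $\LA$-fixed point of $\Sec(EG,E\GA)$ is exactly a $\LA$-equivariant section of $Eq$, so $\Sec(EG,E\GA)^{\LA}$ is the fibre over $\id_{EG}$ of the map
\[ (Eq)_{*}\colon \LA\sU(EG,E\GA)\rtarr \LA\sU(EG,EG), \qquad f\mapsto Eq\com f, \]
which is a fibration since $Eq$ is one. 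Now $EG$ is a free $\LA$-CW complex, so the universal property of $E\LA$ (equivalently, equivariant obstruction theory over the free cells of $EG$) makes $\LA\sU(EG,Z)$ contractible for every model $Z$ of $E\LA$; applying this to $Z = E\GA$ and to $Z = EG$ shows that the source and target of $(Eq)_{*}$ are both contractible, hence so is the fibre $\Sec(EG,E\GA)^{\LA}$.

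Combining the two parts with the recognition criterion gives the theorem. The step I expect to be the real obstacle is not this homotopy-theoretic core --- which collapses once one observes that $EG$ and $E\GA$ are both models of $E\LA$ --- but the point-set bookkeeping that the recognition theorem presupposes: that $Eq$ is genuinely a Hurewicz fibration with an honestly contractible fibre, that $\Sec(EG,E\GA)$ is paracompact with a nondegenerate identity element, and that the local sections of $Eq$ descend to numerable local trivializations of $p$. This is exactly where hypotheses on $\PI$ and $\GA$ (compact Lie, or at least well-pointed and completely regular) are needed, and it is handled in the setting of \cite{May} and \cite{LM}.
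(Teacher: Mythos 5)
The paper does not actually prove this statement: it is quoted as part of \cite[Theorem 5]{May}, so there is no internal proof to compare against. Your sketch reconstructs the argument of the cited source, and its homotopy-theoretic core is correct and is essentially the right one: for a closed $\LA\le \GA$ with $\LA\cap\PI=e$, the quotient $q$ identifies $\LA$ with $q(\LA)\le G$ (as in \myref{crosssubgp}), so $EG$ (with $\LA$ acting through $q$) and $E\GA$ (by restriction) are both free contractible $\LA$-spaces, i.e.\ models of $E\LA$; the $\LA$-fixed points of the conjugation action are exactly the $\LA$-equivariant sections of $Eq$, and these form a (weakly) contractible space because both $\LA\sU(EG,E\GA)$ and $\LA\sU(EG,EG)$ are contractible by the universal property of $E\LA$. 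Together with the emptiness of the fixed point spaces when $\LA\cap\PI\neq e$ (freeness of the pointwise $\PI$-action) and the non-split analogue of \myref{LaMay} from \cite{LM}, this yields universality.

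One step does not work as written, however: local sections of $Eq\colon E\GA\rtarr EG$ have no direct bearing on the local triviality of $p\colon \Sec(EG,E\GA)\rtarr \Sec(EG,E\GA)/\PI$, which is the orbit map of a mapping space, not a map obtained from $E\GA$ itself; producing local trivializations of $p$ from local sections of $Eq$ is a non sequitur. The intended route --- the one the paper itself points to in the remark following \myref{LaMay} --- is \cite[Propositions 4 and 5]{LM}: a free $\PI$-action on a completely regular total space admits local slices, so the orbit map is a locally trivial principal $\PI$-bundle, and local triviality over a paracompact base gives numerability. So the clause about local sections of $Eq$ should be replaced by a verification that $\Sec(EG,E\GA)$ is completely regular and that its orbit space is paracompact. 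With that repair, your sketch is a faithful account of the proof in \cite{May}.
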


Now let the extension be split, so that $\GA = \PI \rtimes G$.\footnote{Ignoring 
group actions, the spaces $E\GA$ and $E\PI\times EG$ are certainly 
homeomorphic since both are homeomorphic to the classifying space of the 
chaotic category with object space $\PI\times G$.} The given 
action of $G$ induces a left action of $G$ on $E\PI$ that, together with the
free right action by $\PI$, makes it a $\GA$-space.  Taking $EG$ to be a left
$G$-space and letting $\GA$ act through $q$ on $EG$, we have the product 
$\GA$-space $E\PI\times EG$. It is free as a $\GA$-space because $E\PI$ is
free as a $\PI$-space and $EG$ is free as a $G$-space.  Since it is obviously
contractible, we may as well take $E\GA = E\PI\times EG$.  Since the second
coordinate of a section $f\colon EG\rtarr E\PI\times EG$ must be the identity,
we then have
\[ \Sec(EG,E\GA) = \sU(EG,E\PI). \]
Its $\GA$-action is defined just as was the $\GA$-action on 
$\Ch \PI)$
in \myref{action2}.  This gives the following specialization of
\myref{Mayone}, which is the space level forerunner of the categorical \myref{universal}. 

\begin{thm}\mylabel{universal2} The quotient map 
$p\colon \sU(EG,E\PI)\rtarr \sU(EG,E\PI)/\PI$
is a universal principal $(G,\PI_G)$-bundle.
\end{thm}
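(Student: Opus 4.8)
The plan is to derive Theorem~\ref{universal2} as a direct specialization of Theorem~\ref{Mayone}, so the task reduces to verifying that the hypotheses of the latter are met after we make the identifications set up in the surrounding discussion. First I would fix the concrete model $E\GA = E\PI \times EG$, observing that it is free as a $\GA$-space (since $E\PI$ is $\PI$-free and $EG$ is $G$-free) and contractible, hence a legitimate choice of universal $\GA$-space; this is where the split-extension hypothesis enters, since it is what makes the product into a $\GA$-space via the twisted commutation relation (\ref{reaction}). With this model, the map $Eq\colon E\GA \rtarr EG$ is the projection onto the second factor, so a section $f\colon EG \rtarr E\PI\times EG$ with $Eq\com f = \id$ is forced to have identity second coordinate and is therefore determined by its first coordinate, an arbitrary map $EG \rtarr E\PI$. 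This gives the identification $\Sec(EG,E\GA) = \sU(EG,E\PI)$ of $\GA$-spaces, where the $\GA$-action on the right is exactly the conjugation-type action of \myref{action2}: $G$ acts by $(gf)(e) = g\cdot f(g^{-1}e)$ and $\PI$ acts pointwise on the target, and one checks the twisted commutation relation $g(f\si) = (gf)(g\cdot \si)$ just as in that lemma.

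Having made these identifications, Theorem~\ref{Mayone} says that $p\colon \Sec(EG,E\GA) \rtarr \Sec(EG,E\GA)/\PI$ is a universal principal $(\PI;\GA)$-bundle; transporting along the homeomorphism of $\GA$-spaces yields that $p\colon \sU(EG,E\PI) \rtarr \sU(EG,E\PI)/\PI$ is a universal principal $(\PI;\GA)$-bundle. Since the extension is split, a principal $(\PI;\GA)$-bundle is the same thing as a principal $(G,\PI_G)$-bundle, so this is precisely the assertion of Theorem~\ref{universal2}. The only genuine content beyond bookkeeping is confirming that the $\GA$-action carried by $\sU(EG,E\PI)$ under the section identification agrees with the one of \myref{action2}; I would spell this out by writing a section as $(f,\id)$ and computing $(\si,g)\cdot(f,\id)$ using (\ref{action}) and the product structure on $E\PI\times EG$.

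The main obstacle I expect is not a deep one but a point-set bookkeeping issue: one must be careful that the topology on $\Sec(EG,E\GA)$ as a subspace of the mapping space $\sU(EG,E\PI\times EG)$ matches the standard function-space topology on $\sU(EG,E\PI)$ under the graph-type identification, and that the $\GA$-action is continuous. This is routine because $EG$ is the realization of a nerve and the function spaces are formed in the category $\sU$ of compactly generated spaces, in which such identifications behave well, but it is the step where a sloppy argument could go wrong. Everything else is an unwinding of definitions, and no separate verification of the fixed-point contractibility criterion of \myref{LaMay} is needed here since that work has already been absorbed into \cite[Theorem 5]{May}, quoted as \myref{Mayone}.
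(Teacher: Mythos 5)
Your proposal is correct and follows essentially the same route as the paper: the paper likewise takes $E\GA = E\PI\times EG$ as a free contractible $\GA$-space, identifies $\Sec(EG,E\GA)$ with $\sU(EG,E\PI)$ carrying the conjugation-type $\GA$-action of Lemma~\ref{action2}, and then quotes Theorem~\ref{Mayone}. The points you flag as needing care (the action comparison and the split-extension identification of $(\PI;\GA)$-bundles with $(G,\PI_G)$-bundles) are exactly the content the paper supplies in the paragraph preceding the theorem.
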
 

We also have the mapping space $\sU(EG,B\PI)$.  The canonical map $E\PI\rtarr B\PI$ induces a map 
$q\colon \sU(EG,E\PI)\rtarr \sU(EG,B\PI)$.  Then there is an induced map 
$\xi$ that makes the following diagram commute.
\[ \xymatrix{
& \sU (EG,E\PI) \ar[dl]_{p} \ar[d]^{q}  \\
\sU  (EG,E\PI)/\PI \ar[r]_-{\xi} &  \sU  (EG,B\PI).} \\ \]
The analogy with the triangle in \myref{notformal} should be evident.  As
observed in \cite[Theorem 5]{May}, elementary covering space 
theory gives the following result, which is the space level forerunner of 
the categorical \myref{Main}.

\begin{thm}\mylabel{Main2} If $\PI$ is discrete, then 
$\xi\colon \sU(EG,E\PI)/\PI \rtarr \sU(EG,B\PI)$
is a homeomorphism and therefore $q\colon \sU(EG,E\PI)\rtarr \sU(EG,B\PI)$ is a 
universal principal $(\PI;\GA)$-bundle. 
\end{thm}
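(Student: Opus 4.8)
The plan is to reduce everything to elementary covering space theory together with \myref{universal2}. Since $\PI$ is discrete, the canonical map $p\colon E\PI\rtarr B\PI$ is a principal $\PI$-bundle with discrete fibre, i.e. a regular covering map, and $E\PI\rtarr B\PI$ is its universal cover since $E\PI$ is contractible. The map $q\colon \sU(EG,E\PI)\rtarr \sU(EG,B\PI)$ is post-composition with $p$; it is $\PI$-equivariant for the action of $\PI$ on the source and the trivial action on the target, so it factors through the continuous map $\xi$ of the statement. The action of $\PI$ on $\sU(EG,E\PI)$ is free, since if $f\si=f$ then $f(x)\si=f(x)$ for every $x\in EG$ and hence $\si=e$ by freeness of the $\PI$-action on $E\PI$. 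So the whole content is that $\xi$ is a homeomorphism of $\GA$-spaces, after which $q=\xi\com p$ inherits its bundle-theoretic properties from $p$.

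First I would show $\xi$ is a continuous bijection. For surjectivity of $q$, hence of $\xi$: given $f\colon EG\rtarr B\PI$, the classical lifting criterion for covering spaces applies because $EG$ is path connected and locally path connected (it is a CW complex) and simply connected, so $f$ lifts to $\tilde f\colon EG\rtarr E\PI$ with $q(\tilde f)=f$. For injectivity of $\xi$, I would identify the fibres of $q$ with $\PI$-orbits: if $p\com f=p\com f'$, then for each $x$ there is a unique $\si(x)\in\PI$ with $f'(x)=f(x)\si(x)$, and reading this off in a local trivialisation of $p$ shows $\si$ is locally constant; since $EG$ is connected, $\si$ is a single element $\si\in\PI$ and $f'=f\si$. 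Thus the fibre of $q$ through $f$ is exactly the $\PI$-orbit of $f$.

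It remains to check that $\xi$ is open, equivalently that $q$ is a quotient map. Here the plan is to upgrade local triviality of $p$ to local triviality of $q$: because $p$ is locally trivial with discrete fibre $\PI$ and $EG$ is a CW complex, post-composition with $p$ makes $q\colon \sU(EG,E\PI)\rtarr \sU(EG,B\PI)$ a principal $\PI$-bundle, the canonical lift near a given lift $\tilde f_0$ being obtained by patching the unique lifts over the pieces of a cover of $EG$ pulled back from trivialising opens of $p$. In particular $q$ is open, so $\xi$ is a homeomorphism. I expect this to be the main obstacle: strictly speaking it requires some point-set care with the compactly generated mapping space topology and with local finiteness of the relevant covers of $EG$, which is precisely why one restricts to $\PI$ discrete and keeps $EG$ a CW complex. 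It is also the analogue, on the mapping-space side, of the fact that \myref{notformal} needed a genuine argument rather than a formality.

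Finally, $\xi$ is $\GA$-equivariant, where $\GA$ acts on $\sU(EG,B\PI)$ through the quotient homomorphism $\GA\rtarr G$, so $\xi$ is an isomorphism of $\GA$-spaces. Composing with \myref{universal2}, which exhibits $p\colon \sU(EG,E\PI)\rtarr \sU(EG,E\PI)/\PI$ as a universal principal $(G,\PI_G)$-bundle, i.e. a universal principal $(\PI;\GA)$-bundle for the split extension $\GA=\PI\rtimes G$, we conclude that $q=\xi\com p\colon \sU(EG,E\PI)\rtarr \sU(EG,B\PI)$ is also a universal principal $(\PI;\GA)$-bundle, as claimed.
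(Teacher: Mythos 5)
Your proposal is correct and takes essentially the same route the paper intends: the paper gives no details, deferring to \cite[Theorem 5]{May} with the phrase ``elementary covering space theory,'' and your argument (unique lifting along the covering $E\PI\rtarr B\PI$ to get surjectivity of $q$ and to identify its fibres with $\PI$-orbits, followed by the openness/local-triviality check and the appeal to \myref{universal2}) is exactly that argument fleshed out. The point-set step you flag as the main obstacle is precisely what the paper delegates to the cited reference, so there is no divergence in approach.
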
 

Note that $G$ but not $\PI$ is required to be discrete in \myref{Main}, 
whereas $\PI$ but not $G$ is required to be discrete in \myref{Main2}.
There is an obvious comparison map relating the categorical and space level 
constructions. For any $G$-categories $\sA$ and $\sB$, we have the evaluation 
$G$-functor
\[ \epz\colon \sC\!at(\sA,\sB)\times \sA \rtarr \sB.\]
Applying the classifying space functor and taking adjoints, this gives a $G$-map
\begin{equation}\label{compare}
 \xi\colon B\sC\!at(\sA,\sB) \rtarr \sU(B\sA,B\sB).
\end{equation}
When $\sA$ and $\sB$ are both discrete (in the topological sense), there is a simple 
analysis of this map in terms of the simplicial mapping space $\Map(N\sA,N\sB)$.
The following two lemmas are well-known nonequivariantly.

\begin{lem} For categories $\sA$ and $\sB$, there is a natural isomorphism
\[ \mu \colon N\sC\!at(\sA,\sB) \iso \Map(N\sA,N\sB), \]
and this is an isomorphism of simplicial $G$-sets if $\sA$ and $\sB$ are $G$-categories.
\end{lem}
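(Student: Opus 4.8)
The plan is to realize $\mu$ at each simplicial level as a composite of classical natural isomorphisms and then to check that these composites are compatible with the simplicial structure and with the $G$-action. Everything rests on two standard properties of the nerve, which I would isolate first.

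Write $[n]$ for the poset $\{0<1<\cdots<n\}$ regarded as a category; then $N[n]=\Delta^{n}$, and by definition $N\sC_{n}=\sC\!at([n],\sC)$ for every category $\sC$. The two facts are: (i) $N$ preserves products — because $N$ is a right adjoint, or directly because $N_{q}\sC$ is an iterated pullback of the object and morphism sets of $\sC$ and pullbacks commute with products; and (ii) $N$ is fully faithful — because a simplicial map out of a nerve is determined by its values on $0$- and $1$-simplices, while the simplicial identities force precisely the compatibility with source, target, identities, and composition that a functor must satisfy. Both are classical nonequivariantly, as the excerpt already uses in passing (e.g. $N(\sC^{G})\iso (N\sC)^{G}$ and the iterated-pullback description of $N_{q}\sC$).

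Next I would string together, for each $n\geq 0$, the chain of natural bijections
\begin{align*}
\Map(N\sA,N\sB)_{n}
&=\mathrm{sSet}(N\sA\times\Delta^{n},N\sB)
=\mathrm{sSet}(N\sA\times N[n],N\sB) \\
&\iso \mathrm{sSet}(N(\sA\times[n]),N\sB)
\iso \sC\!at(\sA\times[n],\sB) \\
&\iso \sC\!at([n],\sC\!at(\sA,\sB))
=N\sC\!at(\sA,\sB)_{n},
\end{align*}
where $\mathrm{sSet}(-,-)$ denotes sets of maps of simplicial sets, and the steps use, in order: the definition of the simplicial mapping space; the identity $\Delta^{n}=N[n]$; fact (i); fact (ii) applied to the pair $(\sA\times[n],\sB)$; the cartesian closure of $\sC\!at$ (the internal hom being the functor category $\sC\!at(\sA,\sB)$, as set up in \S\ref{Sec1}); and the defining formula for the nerve. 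I would then define $\mu$ to be this composite read from right to left, $N\sC\!at(\sA,\sB)\rtarr\Map(N\sA,N\sB)$.

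Finally I would verify that $\mu$ is a simplicial map, and then that it is equivariant. Each morphism $[m]\rtarr[n]$ of $\Delta$ acts on every term of the chain — on the right by restriction along $[m]\rtarr[n]$, on the left by restriction along the induced map $\Delta^{m}\rtarr\Delta^{n}$ — and each of the six isomorphisms above is natural with respect to this action, so the levelwise bijections assemble into a map of simplicial sets; this compatibility is the one genuinely required verification and the main thing to get right, though it is entirely routine. Naturality of $\mu$ in $\sA$ and $\sB$ is visible termwise. Consequently, if $\sA$ and $\sB$ are $G$-categories, then $G$ acts on $N\sA$ and $N\sB$ via the functor $N$, by conjugation on $\sC\!at(\sA,\sB)$ and hence on $N\sC\!at(\sA,\sB)$, and likewise by conjugation on $\Map(N\sA,N\sB)$; since $\mu$ is natural in $\sA$ and $\sB$ it intertwines these conjugation actions, so it is an isomorphism of simplicial $G$-sets.
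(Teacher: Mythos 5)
Your proof is correct and is essentially the paper's own argument: both identify the $n$-simplices on each side via the adjunction $\sC\!at(\sA\times\DE_n,\sB)\iso\sC\!at(\DE_n,\sC\!at(\sA,\sB))$, the product-preservation of $N$, and the full faithfulness of $N$, differing only in that you spell out the levelwise chain and the routine compatibility checks that the paper leaves implicit.
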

\begin{proof} Let $\DE_n$ be the poset $\{0,1,\cdots, n\}$, viewed as a category.
The $n$-simplices of $\sC\!at(\sA,\sB)$ are the functors $\DE_n\rtarr \sC\!at(\sA,\sB)$.
By adjunction, they are the functors $\sA\times \DE_n\rtarr \sB$. Since $N$ is
full and faithful, these functors are the maps of simplicial sets 
\[ N\sA\times N\DE_n\iso N(\sA\times \DE_n) \rtarr N\sB. \]
By definition, these maps are the $n$-simplices of $\Map(N\sA,N\sB)$. These identifications
give the claimed isomorphism of simplicial sets. The compatibility with the actions of
$G$ when $\sA$ and $\sB$ are $G$-categories is clear.
\end{proof}

\begin{lem} For simplicial sets $K$ and $L$, there is a natural map
\[ \nu\colon |\Map(K,L)|\rtarr \sU(|K|,|L|). \]
If $K$ and $L$ are simplicial $G$-sets, $\nu$ is a map of $G$-spaces, and it is a 
weak equivalence of $G$-spaces when $L$ is a Kan complex.
\end{lem}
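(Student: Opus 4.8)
The plan is to construct $\nu$ from the evaluation map and then to identify it, up to the unit and counit of the $|-|\dashv\mathrm{Sing}$ adjunction, with a composite of maps that are visibly weak equivalences. First I would produce $\nu$: the counit of the simplicial-set adjunction $(-)\times K\dashv\Map(K,-)$ is a map $\mathrm{ev}\colon\Map(K,L)\times K\rtarr L$, and applying the (finite-product-preserving) functor $|-|$ and passing to the topological adjoint of
\[ |\Map(K,L)|\times|K|\iso|\Map(K,L)\times K|\xrightarrow{\ |\mathrm{ev}|\ }|L| \]
gives $\nu$. Naturality in $K$ and $L$ is immediate from that of $\mathrm{ev}$ and of the natural isomorphism $|A\times B|\iso|A|\times|B|$ in $\sU$. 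When $K$ and $L$ are simplicial $G$-sets, $\mathrm{ev}$ is a $G$-map for the conjugation action on $\Map(K,L)$ (the auxiliary standard simplices carry trivial $G$-action and do not interfere), and all the comparison maps are $G$-equivariant, so $\nu$ is a map of $G$-spaces.

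For the weak equivalence, write $\eta\colon L\rtarr\mathrm{Sing}|L|$ for the unit and $\varepsilon\colon|\mathrm{Sing}\,Z|\rtarr Z$ for the counit. I would first record the natural isomorphism of simplicial sets $\Map(K,\mathrm{Sing}\,Z)\iso\mathrm{Sing}\,\sU(|K|,Z)$, whose $n$-simplices are in both cases the maps $|K|\times|\Delta^n|\rtarr Z$ (using $|K\times\Delta^n|\iso|K|\times|\Delta^n|$). A routine check with the triangle identities then shows that $\nu$ factors as
\[ |\Map(K,L)|\xrightarrow{\ |\Map(K,\eta)|\ }|\Map(K,\mathrm{Sing}|L|)|\iso|\mathrm{Sing}\,\sU(|K|,|L|)|\xrightarrow{\ \varepsilon\ }\sU(|K|,|L|). \]
The counit $\varepsilon$ is always a weak equivalence. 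Since $L$ is Kan, $\mathrm{Sing}|L|$ is Kan and $\eta$ is a weak equivalence between Kan complexes, hence a simplicial homotopy equivalence; as $\Map(K,-)$ is a simplicial functor it preserves simplicial homotopy equivalences, so $\Map(K,\eta)$, and hence $|\Map(K,\eta)|$, is a homotopy equivalence of spaces. Therefore $\nu$ is a weak equivalence.

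To upgrade this to a weak equivalence of $G$-spaces I would check that $\nu^H$ is a weak equivalence for each (closed) $H\subset G$. Every object in the factorization is a $G$-space and every map a $G$-map, so it is enough to run the previous argument on $H$-fixed points, using that geometric realization of simplicial sets commutes with fixed points under a finite group, $|X|^H\iso|X^H|$, and that $\mathrm{Sing}$ commutes with fixed points, $(\mathrm{Sing}\,Z)^H\iso\mathrm{Sing}(Z^H)$. Then $\varepsilon^H$ is again a counit and the middle map is still an isomorphism, so the only issue is whether $\eta^H\colon L^H\rtarr\mathrm{Sing}|L^H|$ is a homotopy equivalence and whether $\Map(K,\eta)$ stays a homotopy equivalence on $H$-fixed points; both follow once $L^K$ is a Kan complex for every $K\subset H$, i.e. once $L$ is genuinely fibrant. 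This is the main obstacle: it is not automatic for an arbitrary Kan complex $L$ with $G$-action, but it holds in the cases we use — in particular whenever $L=N\sB$ for a $G$-category $\sB$, since then $L^H=N(\sB^H)$ is the nerve of a category and hence Kan, and (for any Kan $L$) whenever the relevant subgroups are finite $p$-groups, by Smith theory applied to the contractible fibres of the trivial fibration $L^{\Delta^n}\rtarr L^{\Lambda^n_k}$.
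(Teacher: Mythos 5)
Your construction of $\nu$ as the adjoint of the realized evaluation map is exactly the paper's, and your overall argument is correct; but for the weak equivalence you take a genuinely different and more explicit route. The paper invokes the same two inputs you do (that $\Map(K,L)$ is Kan when $L$ is, and that the unit maps $L\rtarr S|L|$ and $\Map(K,L)\rtarr S|\Map(K,L)|$ are homotopy equivalences of Kan complexes, $S$ denoting the total singular complex), but then finishes with an unspecified ``diagram chase'' showing that $\nu$ induces a bijection $[|J|,|\Map(K,L)|]\rtarr[|J|,\sU(|K|,|L|)]$ for every simplicial set $J$. You instead factor $\nu$ as the composite of $|\Map(K,\eta)|$, the realization of the isomorphism $\Map(K,S|L|)\iso S\,\sU(|K|,|L|)$, and the counit $|S\,\sU(|K|,|L|)|\rtarr\sU(|K|,|L|)$; this makes the paper's diagram chase completely transparent and is, if anything, preferable. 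On the equivariant statement you are more careful than the source: the paper disposes of it with ``all functors in sight commute with passage to $H$-fixed points,'' which is true of $|-|$, $S$, and $\Map(K,-)$, but rerunning the argument on fixed points does require each $L^H$ to be Kan, and that is not a formal consequence of the underlying simplicial set $L$ being Kan. You correctly isolate this as the one substantive hypothesis and observe that it holds in the intended application, where $L=N\sB$ for a $G$-groupoid $\sB$ and $L^H=N(\sB^H)$ is again the nerve of a groupoid; that is exactly why \myref{catcomp} assumes $\sB$ is a groupoid. Two small remarks: the identification $|X|^H\iso|X^H|$ holds for an arbitrary group $H$ acting simplicially, not just a finite one, since a point of $|X|$ is $H$-fixed if and only if its nondegenerate representing simplex is; and the Smith-theory aside is not needed anywhere in this paper.
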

\begin{proof}  
The evaluation map $\Map(K,L)\times K \rtarr L$ induces a map
\[ |\Map(K,L)|\times |K|  \iso |\Map(K,L)\times K| \rtarr |L| \]
whose adjoint is $\nu$.  When $L$ is a Kan complex, so is $\Map(K,L)$
(e.g. \cite[6.9]{MaySS}), 
and the natural maps $L\rtarr S|L|$ and $\Map(K,L)\rtarr S|\Map(K,L)|$ 
are homotopy equivalences, where $S$ is the total singular complex functor. 
A diagram chase
shows that $\xi$ induces a bijection on homotopy classes of maps 
\[  \xi_*\colon [|J|,|\Map(K,L)|\ \rtarr [|J|, \sU(|K|,|L|)] \]
for any simplicial set $J$. Letting $G$ act trivially on $J$, all functors
in sight commute with passage to $H$-fixed points, and the equivariant 
conclusions follow.
\end{proof}

Now the following result is immediate from the definitions and lemmas above.

\begin{prop}\mylabel{catcomp}  For $G$-categories $\sA$ and $\sB$, the map $\xi$ of (\ref{compare})
is the composite $\nu\com \mu$, and it is a weak $G$-equivalence if $\sB$ is a 
groupoid.
\end{prop}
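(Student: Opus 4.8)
The plan is to recognize that the proposition is a formal consequence of the definition of $\xi$ in (\ref{compare}) together with the two preceding lemmas, so the argument is essentially a diagram chase. First I would unwind the definition of $\xi$: it is the adjoint of the map obtained by applying $|N(-)|$ to the evaluation $G$-functor $\epz\colon \sC\!at_G(\sA,\sB)\times \sA\rtarr \sB$, using that $|N(-)|$ preserves products. Since $N$ preserves products, $N\epz$ is a map of simplicial $G$-sets $N\sC\!at_G(\sA,\sB)\times N\sA\rtarr N\sB$, and I would check that under the isomorphism $\mu$ of the first lemma it is identified with the standard evaluation map $\Map(N\sA,N\sB)\times N\sA\rtarr N\sB$. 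This compatibility is immediate from the way $\mu$ was built: on $n$-simplices both maps send a pair consisting of a functor $\sA\times\DE_n\rtarr \sB$ and a functor $\DE_n\rtarr\sA$ (i.e.\ an $n$-simplex of $N\sA$) to the evident composite $\DE_n\rtarr\sA\times\DE_n\rtarr\sB$, and $N$ is full and faithful. Realizing and passing to adjoints then exhibits $\xi$ as the composite
\[ B\sC\!at_G(\sA,\sB)=|N\sC\!at_G(\sA,\sB)|\ \rtarr\ |\Map(N\sA,N\sB)|\ \rtarr\ \sU(B\sA,B\sB), \]
in which the first arrow is $|\mu|$ and the second is $\nu$, because the realized evaluation $|\Map(N\sA,N\sB)|\times|N\sA|\iso |\Map(N\sA,N\sB)\times N\sA|\rtarr|N\sB|$ is exactly the map whose adjoint defines $\nu$. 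Writing $\mu$ for $|\mu|$, this is the asserted factorization $\xi=\nu\com\mu$.

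For the weak equivalence statement I would invoke the standard fact that the nerve of a groupoid is a Kan complex: inner horns in any nerve have (unique) fillers, and invertibility of the morphisms of $\sB$ supplies fillers for the remaining, outer, horns. This filling is natural, hence $G$-equivariant and compatible with passage to $H$-fixed points. So when $\sB$ is a groupoid the second lemma applies with $L=N\sB$ and shows that $\nu$ is a weak equivalence of $G$-spaces; since $\mu$ is an isomorphism of simplicial $G$-sets, $|\mu|$ is a homeomorphism, and therefore $\xi=\nu\com|\mu|$ is a weak $G$-equivalence.

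The one step carrying genuine content — and the place I would be most careful — is the identification of $N\epz$ with the standard simplicial evaluation map under $\mu$; once that is in hand everything else is formal. A minor point worth spelling out is that \emph{weak $G$-equivalence} is tested on $H$-fixed points for all (closed) $H\subset G$, and that $N$, $|-|$, $\Map(-,-)$ and $\sU(-,-)$ all commute with passage to $H$-fixed points on the discrete and simplicial objects in play, so the equivariant assertion reduces levelwise to the nonequivariant one, exactly as in the proofs of the two lemmas above.
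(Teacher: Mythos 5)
Your proposal is correct and follows exactly the route the paper intends: the paper simply declares the result ``immediate from the definitions and lemmas above,'' and your argument fills in precisely those details --- identifying $N\epz$ with the simplicial evaluation map under $\mu$, realizing and passing to adjoints to get $\xi=\nu\com|\mu|$, and invoking that the nerve of a groupoid is a Kan complex so that the second lemma applies. The only implicit hypothesis worth noting is that the two lemmas are stated for (topologically) discrete $\sA$ and $\sB$, so the proposition should be read in that setting.
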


Returning to the topological setting, take $\sA=\tilde{G}$ and write 
$EG = |N\tilde{G}|$, as we may. Recalling that $E\PI\rtarr B\PI$ is obtained 
by applying $B$ to the functor
$\tilde{\PI}\rtarr \PI$, we obtain the following commutative diagram. 
\[ \xymatrix{
B\Ch \tilde \PI) \ar[d] \ar[rr] &  & \sU(EG,E\PI) \ar[d] \\
B\Ch \tilde \PI)/\PI \ar[d] \ar[rr]  & & \sU(EG,E\PI)/\PI \ar[d] \\
B\Ch \PI)  \ar[rr] &  & \sU(EG,B\PI) \\ }  \]
Theorems \ref{universal} and \ref{universal2} say that that the top two vertical
arrows are often universal principal $(\PI;\GA)$-bundles, in which case the
top two horizontal arrows are equivalences.  Theorems \ref{Main} and \ref{Main2}
say that the lower two vertical arrows and therefore also the bottom horizontal
arrow are also often equivalences. When both $\PI$ and $G$ are discrete, the
equivalences are immediate from \myref{catcomp}. More elaborate arguments might 
prove all of these results in greater topological generality. 

\section{Other categorical models for classifying spaces $B(G,\PI_G)$}
For particular $G$-groups $\PI$, there are alternative categorical models
for universal principal $(G,\PI_G)$-bundles that are important in our 
applications in \cite{GM, Merling}.  They lead to equivalent, but more
intuitive, constructions of categorical models for a number of interesting
$G$-spectra, in particular suspension $G$-spectra and the equivariant
$K$-theory spectra of rings with actions by $G$.

Perhaps surprisingly, \color{black} the symmetric groups $\SI_n$ with trivial $G$-action are of particular 
importance in equivariant infinite loop space theory.  For a ring $R$ with 
an action of a group $G$ via ring maps, the general linear groups 
$GL(n,R)$ with $G$-action on all matrix entries are of particular importance.  
We give alternative models for universal principal bundles applicable to these cases.
We focus on the total spaces here and explain additional structure on the 
resulting classifying spaces in \cite{GM2}. We assume that $G$ is finite,
although some of the definitions make sense and are interesting more generally.

\subsection{A model $\tilde{\sE}_G(n)$ for $E(G,\SI_n)$}

\begin{defn}\mylabel{UU}
Let $U$ be a countable ambient $G$-set that contains countably many copies of each orbit $G/H$.
The action of $G$ on $U$ fixes bijections $g \colon A\rtarr gA$ for all finite subsets $A$ of $U$, denoted $a \mapsto g\cdot a$.  
\end{defn}

Let $\mathbf{n}=\{1,\cdots,n\}$ and view elements $\si\in \SI_n$ as functions
$\mathbf{n}\rtarr \mathbf{n}$, so that $\si(i) = \si\cdot i$ gives a left
action of $\SI_n$ on $\mathbf{n}$. 

\begin{defn}\mylabel{EGJ} 
For $n\geq 0$, let $\tilde{\sE}_G(n)$ denote the
chaotic $(\SI_n\times G$)-category whose set $\sO b$ of objects is the set of 
pairs $(A,\al)$, where $A$ is an $n$-element subset of $U$ and $\al\colon \mathbf{n}\rtarr A$ is a bijection. 
Let $G$ act on $\sO b$ on the left by postcomposition and 
let $\SI_n$ act on the right by precomposition.  Thus
$g(A,\al) = (gA,g\com \al)$ for $g\in G$, and $(A,\al)\si = (A,\al\com \si)$ for 
$\si\in\SI$; of course
\[(g\com \al)\com \si = g\com\al\com \si = g\com (\al\com \si).\]
The action of $\SI_n\times G$ is given by $(\si,g)(A,\al) = (gA,g\com\al\com \si^{-1})$.  
Since $\tilde{\sE}_G(n)$ is chaotic, this fixes the actions on the morphism set, 
which the map $(S,T)$ identifies with $\sO\,b\times \sO b$ with $\SI_n\times G$
acting diagonally. 
\end{defn}

\begin{prop}\mylabel{GSIHtpyType} For each $n$, the classifying space 
$|N\tilde{\sE}_G(n)|$ is a universal principal $(G,\SI_n)$-bundle. 
\end{prop}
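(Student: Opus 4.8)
The plan is to verify the hypotheses of \myref{LaMay}, namely that $|N\tilde{\sE}_G(n)|$ is a numerable principal $(G,\SI_n)$-bundle whose $\LA$-fixed points are contractible for every subgroup $\LA$ of $\SI_n\rtimes G$ with $\LA\cap \SI_n = e$. Since $\tilde{\sE}_G(n)$ is a chaotic topological $(G,\SI_n)$-category and $\SI_n$ acts freely on its object set (precomposition with a bijection $\al\colon\mathbf n\rtarr A$ is free), the quotient $p\colon|N\tilde{\sE}_G(n)|\rtarr |N\tilde{\sE}_G(n)|/\SI_n$ is a principal $\SI_n$-bundle, and by the $\GA$-equivariant formalism already set up (\myref{action2} and the discussion following it, with $\GA = \SI_n\rtimes G$ acting through $q$ on the base) it is a principal $(G,\SI_n)$-bundle. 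Numerability follows from the point-set remarks preceding the theorem, since with $G$ finite and $\SI_n$ discrete the total space is a CW complex. So the real content is the fixed-point contractibility.

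First I would reduce the fixed-point computation to the category level: because $|-| = |N(-)|$ commutes with passage to $\LA$-fixed points (as $N$ is a right adjoint and $|-|$ a left adjoint, and fixed points are limits, exactly the point emphasized in the introduction), we have $|N\tilde{\sE}_G(n)|^{\LA} \iso |N(\tilde{\sE}_G(n)^{\LA})|$. Since $\tilde{\sE}_G(n)$ is chaotic, \myref{chaochao2} (or directly the chaotic condition) gives that $\tilde{\sE}_G(n)^{\LA}$ is again chaotic, and by \myref{simpcon} / \myref{chacat} the classifying space of a nonempty chaotic topological category is contractible. Hence everything comes down to showing that $\tilde{\sE}_G(n)^{\LA}$ is nonempty, i.e. that $\tilde{\sE}_G(n)$ has a $\LA$-fixed object.

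By \myref{crosssubgp}, such a $\LA$ equals $\LA_{\al} = \{(\al(h),h)\mid h\in H\}$ for a subgroup $H\subset G$ and a crossed homomorphism $\al\colon H\rtarr \SI_n$; here $\al$ makes $\mathbf n$ into an $H$-set via $h\cdot i = \al(h)(i)$ (that this is an action is precisely the cocycle identity (\ref{cross1})). A fixed object of $\tilde{\sE}_G(n)$ is a pair $(A,\be)$ with $\be\colon\mathbf n\rtarr A$ a bijection such that $(\al(h),h)(A,\be) = (A,\be)$ for all $h\in H$, i.e. $hA = A$ and $h\com\be\com\al(h)^{-1} = \be$, equivalently $\be$ is an isomorphism of $H$-sets from $\mathbf n$ (with the $\al$-twisted action) to $A\subset U$ (with the restricted $G$-action). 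Such $A$ exists inside $U$: decompose the $\al$-twisted $H$-set $\mathbf n$ into orbits $H/K_j$; each $H/K_j$ embeds $H$-equivariantly into $U$ because $U$ contains countably many copies of every orbit $G/H'$ and hence, by restriction, every $H$-orbit, and countability lets us choose the finitely many embedded orbits to have pairwise disjoint images. Taking $A$ to be the union of these images and $\be$ the resulting bijection gives a $\LA$-fixed object.

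The main obstacle is this last disjointness/embeddability argument — making precise that the ambient $G$-set $U$ of \myref{UU} is ``large enough'' to house every finite $H$-set, for every $H\subset G$, with room to spare for disjoint copies. This is exactly why \myref{UU} was stated with countably many copies of each orbit. Everything else (chaoticity passing to fixed points, contractibility of chaotic nerves, $N$ versus $|-|$ commuting with fixed points, the identification of $\LA$ with $\LA_{\al}$) is quoted from earlier results, so the proof is short once this combinatorial point is dispatched; I would also remark that nonemptiness alone suffices precisely because chaotic categories have contractible — not merely connected — classifying spaces.
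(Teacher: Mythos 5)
Your proof is correct and follows essentially the same route as the paper's: reduce, via freeness of the $\SI_n$-action and chaoticity, to showing that the fixed category $\tilde{\sE}_G(n)^{\LA}$ is non-empty for $\LA = \LA_{\rh}$, and then exhibit a fixed object by realizing the $\rh$-twisted $H$-set $\mathbf{n}$ equivariantly inside $U$. The only (immaterial) difference is in the final combinatorial step: you embed the $H$-orbits of $\mathbf{n}$ one at a time into $U$ restricted to $H$, choosing disjoint copies, whereas the paper embeds the induced finite $G$-set $G\times_H \mathbf{n}$ into $U$ and restricts the resulting bijection back to $\mathbf{n}$.
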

\begin{proof} For each $A$, choose a base bijection 
$\et_A\colon \mathbf{n} \rtarr A$. The function sending $\si$ to 
$(A,\et_A\com\si)$ is an isomorphism of right $\SI_n$-sets
from $\SI_n$ to the set of objects $(A,\al)$; its inverse
sends $(A,\al)$ to $\et^{-1}_A\com \al$. Thus $\SI_n$ acts freely 
on $\tilde{\sE}_G(n)$. Since $\tilde{\sE}_G(n)$ is chaotic, it 
suffices to show that the set of objects of
$\tilde{\sE}_G(n)^{\LA}$ is non-empty if $\LA\cap\SI_n=\{e\}$. 
As usual, $\LA =\{(\rh(h),h)|h\in H\}$, where $H$ is a subgroup of $G$ and 
$\rh\colon H\rtarr \SI_n$ is a homomorphism. 

Let $H$ act through $\rh$ on $\mathbf{n}$, so that $h\cdot i = \rh(h)(i)$.
Since $U$ contains a copy of every finite $G$-set, there is a bijection 
of $G$-sets $\be\colon G\times_H \mathbf{n}\rtarr B\subset U$. Its restriction
to $\mathbf{n}$ gives a bijection of $H$-sets $\al\colon \mathbf{n}\rtarr A\subset B$. 
We claim that this $(A,\al)$ is a $\LA$-fixed object.  Obviously $hA=A$ for $h\in H$. 
By \myref{EGJ}, we have 
$(\rh(h),h)(A,\al) = (A,h\com \al\com \rh(h)^{-1})$, where
\begin{eqnarray*} 
(h\com \al\com \rh(h)^{-1})(i) & = & h\cdot \al(\rh(h)^{-1}(i))\\ 
& = & h\cdot h^{-1}\cdot \al(i) = \al(i).  \qedhere\\ 
\end{eqnarray*}
\end{proof}

\begin{defn}\mylabel{ExamE} Define $\sE_G(n)$ to be the orbit $G$-category
$\tilde{\sE}_G(n)/\SI_n$.  
\end{defn}

By \myref{GSIHtpyType} and \S2.3, $B\sE_G(n)$ is a classifying space $B(G,\SI_n)$. 
Up to isomorphism, the $G$-category $\sE_G(n)$ admits the following 
more explicit description. 

\begin{lem}\mylabel{RightE} The objects of $\sE_G(n)$ are the $n$-pointed subsets 
$A$ of $U$. The morphisms are the bijections $\al\colon A\rtarr B$,
with the evident composition and identities. The group $G$ 
acts by translation on objects and by conjugation on morphisms.
That is, $g$ sends $A$ to $gA$ and $\al$ to $g\al$, where 
$g\al = g\com \al\com g^{-1}$, so that $(g\al)(g\cdot a) = g\cdot\al(a)$. 
\end{lem}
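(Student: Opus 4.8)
The plan is to unwind the definition $\sE_G(n)=\tilde{\sE}_G(n)/\SI_n$ from \myref{ExamE} directly, using that $\tilde{\sE}_G(n)$ is chaotic and that, as recorded in the proof of \myref{GSIHtpyType}, $\SI_n$ acts freely on it. First I would treat the objects. Since $\SI_n$ acts on $\sO b$ by precomposition, $(A,\al)\si=(A,\al\com\si)$, the $\SI_n$-orbit of $(A,\al)$ is $\{(A,\al\com\si)\mid \si\in\SI_n\}$, and this depends only on $A$ because for fixed $A$ any two bijections $\mathbf{n}\rtarr A$ differ by a unique permutation. Hence the set of objects of $\sE_G(n)$ is the set of $n$-element subsets of $U$, and the $G$-action descends to $g\cdot A=gA$; a choice of base bijections $\et_A\colon\mathbf{n}\rtarr A$ as in \myref{GSIHtpyType} splits each orbit.

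Next I would identify the morphisms. As $\tilde{\sE}_G(n)$ is chaotic, its morphism set is $\sO b\times\sO b$ with $(S,T)$ the two projections and $\SI_n\times G$ acting diagonally, so a morphism is the unique arrow $(A,\al)\rtarr(B,\be)$, which I identify with the pair $\big((B,\be),(A,\al)\big)$. Define $\Phi$ on this pair by $\Phi\big((B,\be),(A,\al)\big)=\be\com\al^{-1}\colon A\rtarr B$, a bijection between $n$-element subsets of $U$. It is $\SI_n$-invariant, since replacing the pair by $\big((B,\be\si),(A,\al\si)\big)$ gives $(\be\si)\com(\al\si)^{-1}=\be\com\al^{-1}$; thus $\Phi$ descends to a map from $\SI_n$-orbits of morphisms to bijections of $n$-element subsets, compatibly with source and target. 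Surjectivity is clear: given $\ga\colon A\rtarr B$, pick any $\al\colon\mathbf{n}\rtarr A$ and set $\be=\ga\com\al$. For injectivity, if $\be\com\al^{-1}=\be'\com(\al')^{-1}=\ga$ with source $A$ and target $B$, put $\si=\al^{-1}\com\al'\in\SI_n$; then $(A,\al)\si=(A,\al')$ and $(B,\be)\si=(B,\ga\com\al\com\si)=(B,\ga\com\al')=(B,\be')$, so the two morphisms lie in a single diagonal $\SI_n$-orbit. This identifies the morphisms of $\sE_G(n)$ with the bijections $\al\colon A\rtarr B$.

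Then I would transport the remaining structure along $\Phi$. The composite in the chaotic category $\tilde{\sE}_G(n)$ of $(A,\al)\rtarr(B,\be)$ with $(B,\be)\rtarr(C,\ga)$ is $(A,\al)\rtarr(C,\ga)$, which $\Phi$ sends to $\ga\com\al^{-1}=(\ga\com\be^{-1})\com(\be\com\al^{-1})$, and the identity of $(A,\al)$ goes to $\al\com\al^{-1}=\id_A$; so composition and identities in $\sE_G(n)$ are the evident ones for bijections. For the $G$-action on morphisms, $G$ acts diagonally on $\sO b\times\sO b$, so $g$ carries $\big((B,\be),(A,\al)\big)$ to $\big((gB,g\com\be),(gA,g\com\al)\big)$, which $\Phi$ sends to $(g\com\be)\com(g\com\al)^{-1}=g\com(\be\com\al^{-1})\com g^{-1}$. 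Hence $g$ acts on a morphism $\al\colon A\rtarr B$ by $g\al=g\com\al\com g^{-1}\colon gA\rtarr gB$, and for $a\in A$ we get $(g\al)(g\cdot a)=g\com\al\com g^{-1}(g\cdot a)=g\cdot\al(a)$, as asserted.

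The main obstacle is purely bookkeeping: there is no deep input beyond \myref{GSIHtpyType}, and the only real care is keeping the two actions straight (precomposition by $\SI_n$ versus postcomposition by $G$, and the diagonal action on morphisms) and checking that $\Phi$ is well defined and injective on morphism orbits. That last point is exactly where one uses that $\al$ and $\be$ are bijections, i.e.\ that $\SI_n$ acts freely on $\tilde{\sE}_G(n)$.
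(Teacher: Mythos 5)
Your proposal is correct and follows essentially the same route as the paper: identify $\SI_n$-orbits of objects with $n$-element subsets $A\subset U$, identify the orbit of the unique morphism $(A,\al)\rtarr(B,\be)$ with the bijection $\be\com\al^{-1}\colon A\rtarr B$, and transport composition and the diagonal $G$-action across that identification. The only cosmetic difference is that you work with the invariantly defined map $\Phi$ and verify it descends to orbits, whereas the paper fixes base bijections $\et_A$ (with $\et_{gA}=g\com\et_A$) and reads off the same formulas from those chosen representatives.
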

\begin{proof} The objects $(A,\al)$ are all in the same orbit, denoted $A$,
and the bijections $\et_A$ chosen in the proof of \myref{GSIHtpyType} give 
orbit representatives for the objects of $\sE_G(n)$. In $\tilde{\sE}_G(n)$, 
we have a unique morphism $\io_{\be}\colon (A,\et_A) \rtarr (B,\be)$ for each 
bijection $\be\colon \mathbf{n}\rtarr B$, and these morphisms give orbit
representatives for the set of morphisms $A\rtarr B$ in $\sE_G(n)$.  Letting
the orbit of $\io_{\be}$ correspond to the bijection 
$\al = \be\com \et_A^{-1}\colon A\rtarr B$
and noting that $\al = \et_B\com \si\com \et_A^{-1}$ for a unique 
$\si\in \SI_n$, we obtain
the claimed description of $\sE_G(n)$. Since  $\et_A$ specifies an ordering on
$A$, $\et_{gA}$ is fixed as $g\com \et_A$. Then if $\al = \be\com \et_A^{-1}$,
\[ g\com \al\com g^{-1} = g\com (\be\com \et_A^{-1})\com (\et_A \com \et_{gA}^{-1}) 
= g\com \be \com \et_{gA}^{-1}
\colon gA \rtarr gB.   \qedhere \]
\end{proof}

\subsection{$G$-rings, $G$-ring modules, and crossed homomorphisms}

By a $G$-ring we understand a ring $R$ with a left action of $G$ on $R$ through
ring automorphisms. We do not assume that $R$ is commutative, although that 
is the case of greatest interest to us.  Following the literature, we write 
$g(r) = r^g$ for the automorphism $g\colon R\rtarr R$ determined by $g\in G$. 
Then $r^{gh} = g(h(r)) = (r^h)^g$.

When $R$ is a subquotient of $\bQ$, the only automorphism of $R$ is the identity 
and the action of $G$ must be trivial, but non-trivial examples abound.  One important 
example is the action of the Galois group on a Galois extension $E$ of a field $F$. 

In the next section we will give an analogue of $\tilde{\sE}_G(n)$ but with 
$\PI = \SI_n$ replaced by $\PI = GL(n,R)$ with the entrywise action of $G$. 
We will need a tiny bit of what appears to us to be a relatively undeveloped 
part of representation theory.

For a $G$-ring $R$, there are standard notions of a ``crossed product'' ring, a
``group-graded ring'', and, as a special case of both, a ``skew group ring'', 
variously denoted $R\rtimes G$ or $R\ast G$. We shall use 
the notation $R_G[G]$ for the last of these notions. If the action of $G$ on $R$ is 
given by the homomorphism $\tha\colon G\rtarr \Aut(R)$, a more precise notation would be 
$R_\tha[G]$.  Observe that $R$ is a $k$-algebra, where $k$ denotes the
intersection of the center of $R$ with $R^G$. 

\begin{defn} As an $R$-module, $R_G[G]$ is the same as the group ring $R[G]$, 
which is the case when $G$ acts trivially on $R$. 
We define the product on $R_G[G]$ by $k$-linear (not $R$-linear) extension 
of the relation
\[  (rg)\, (sh) = r s^g\, gh \]
for $r,s\in R$ and $g,h\in G$.  Thus $R$ and $k[G]$ are subrings of $R_G[G]$ and 
\[  g\, r = r^g\, g. \] 
\end{defn}

\begin{defn}
We call (left) $R_G[G]$-modules ``$G$-ring modules'' or ``skew $G$-modules''.  Such 
an $M$ is a left $R$-module and a left $k[G]$-module such that $g(rm) = r^g(gm)$ for $m\in M$. 
If $M$ is $R$-free, we call $M$ a skew representation of $G$ over $R$. 
\end{defn}

Although special cases have appeared and there is a substantial literature
on crossed products, group-graded rings, and skew group rings (for example 
\cite{Dade, Mont, Pass}), we have not found 
a systematic study of these representations in the literature.  Kawakubo's
paper \cite{Kawa} gives a convenient starting point. The following relationship with 
crossed homomorphisms is his \cite[5.1]{Kawa}. 

\begin{thm}\mylabel{CrossR}  Let $R$ be a $G$-ring. Then the set of isomorphism
classes of $R_G[G]$-module structures on the $R$-module $R^n$ is in canonical bijective 
correspondence with $H^1(G;GL(n,R))$. In detail, let $\{e_i\}$ be the standard basis 
for $R^n$.  Then the formula 
\[ g e_i = \rh(g)(e_i) \]
establishes a bijection between $R_G[G]$-module structures on $R^n$ and 
crossed homomorphisms $\rh\colon G\rtarr GL(n,R)$.  Moreover, two $R_G[G]$-modules
with underlying $R$-module $R^n$ are isomorphic if and only if their 
corresponding crossed homomorphisms are isomorphic.  
\end{thm}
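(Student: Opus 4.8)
\section*{Proof proposal (for Theorem~\ref{CrossR})}

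The plan is to prove Theorem~\ref{CrossR} by unwinding the definition of an $R_G[G]$-module, reducing everything to the arithmetic of crossed homomorphisms set up in \S\ref{Sec4}. Since $R_G[G]$ is generated as a ring by its subrings $R$ and $k[G]$ subject only to the relation $g\,r = r^g\,g$, to give an $R_G[G]$-module structure on a fixed left $R$-module $M$ is the same as to give, for each $g\in G$, an additive automorphism $m\mapsto gm$ of $M$ with $g(rm) = r^g(gm)$, with $e$ acting as the identity and $g\mapsto (m\mapsto gm)$ multiplicative; I will call such a map $g$-semilinear. I specialize to $M=R^n$, equipped in addition with its coordinatewise $G$-action $v\mapsto v^g$ coming from the $G$-ring structure on $R$. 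The elementary fact I would record first is that composition with this fixed coordinatewise twist identifies the $g$-semilinear automorphisms of $R^n$ with the $R$-linear automorphisms: every $g$-semilinear automorphism factors uniquely as $v\mapsto A\cdot v^g$ with $A\in GL(n,R)$, and conjugating an $R$-linear endomorphism by the twist $v\mapsto v^g$ realizes precisely the entrywise $G$-action on $GL(n,R)$.

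First I would establish the object-level correspondence. Given an $R_G[G]$-module structure on $R^n$, the fact above attaches to each $g$ a unique $\rh(g)\in GL(n,R)$ with $gv = \rh(g)\cdot v^g$; since $v\mapsto v^g$ fixes each $e_i$, this is characterized by $g e_i = \rh(g)(e_i)$. Writing out $g(hv) = (gh)v$ in terms of the twists and cancelling the common $v\mapsto v^{gh}$, multiplicativity becomes the identity $\rh(gh) = \rh(g)\,(g\cdot\rh(h))$, which is precisely the crossed homomorphism relation~(\ref{cross1}) for the entrywise action of $G$ on $\PI = GL(n,R)$. Conversely, any crossed homomorphism $\rh\colon G\rtarr GL(n,R)$ makes $v\mapsto \rh(g)\cdot v^g$ a $g$-semilinear automorphism, and the same computation run backwards, together with $\rh(e)=e$ from~(\ref{cross2}), gives multiplicativity, so this is an $R_G[G]$-module structure on $R^n$. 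The two constructions are mutually inverse, so $R_G[G]$-module structures on the $R$-module $R^n$ are in canonical bijection with crossed homomorphisms $G\rtarr GL(n,R)$, realized by the formula $g e_i = \rh(g)(e_i)$ of the statement.

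Next I would handle isomorphisms. An isomorphism between two such structures, with associated crossed homomorphisms $\al$ and $\be$, is in particular an $R$-linear automorphism of $R^n$, hence an element $\si\in GL(n,R)$; unwinding the condition that $\si$ intertwine the two actions $v\mapsto \al(g)\cdot v^g$ and $v\mapsto \be(g)\cdot v^g$ and cancelling the common twist gives $\si\,\al(g) = \be(g)\,(g\cdot\si)$ for all $g$. By Definition~\ref{crossedhom2} this is exactly the condition for $\si$ to be a morphism $\al\rtarr\be$ in the crossed functor category $\sC at_\times(\mathbf{G},\mathbf{GL(n,R)})$. Hence two $R_G[G]$-modules with underlying $R$-module $R^n$ are isomorphic if and only if the corresponding crossed homomorphisms are isomorphic, i.e.\ represent the same class in $H^1(G;GL(n,R))$ by Definition~\ref{crossedhom3}; passing to isomorphism classes in the object-level bijection then gives the asserted bijection with $H^1(G;GL(n,R))$.

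The proof is in essence a translation exercise, so I do not anticipate a real obstacle. The one point that demands care, given the paper's pedantic left/right conventions and the fact that $R$ need not be commutative, is the bookkeeping: isolating the correct meaning of ``$g$-semilinear'', carrying the coordinatewise twist $v\mapsto v^g$ consistently through the expression $\rh(g)\cdot v^g$, and checking that the order of the factors in $\rh(gh)=\rh(g)(g\cdot\rh(h))$ comes out matching~(\ref{cross1}) rather than the anti-homomorphism relation~(\ref{cross3}). This is, in effect, Kawakubo's~\cite[5.1]{Kawa} argument recast in the present notation.
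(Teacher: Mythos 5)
Your proposal is correct and follows essentially the same route as the paper: both translate an $R_G[G]$-module structure on $R^n$ into matrices via the semilinearity relation $g(rm)=r^g(gm)$ and identify the associativity condition $g(he_i)=(gh)e_i$ with the crossed homomorphism identity $\rh(gh)=\rh(g)(g\cdot\rh(h))$, and both match module isomorphisms with morphisms in $\sC at_\times(\mathbf{G},\mathbf{GL(n,R)})$. Your packaging via the unique factorization of a $g$-semilinear automorphism as an $R$-linear map composed with the coordinatewise twist is a slightly more conceptual organization of the paper's explicit double-sum computation on basis elements, and you spell out the isomorphism verification that the paper leaves as a straightforward exercise.
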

\begin{proof} Given an $R_G[G]$-module structure on $R^n$, define
the matrix $\rh(g)$ in $GL(n,R)$ by letting its $i^{th}$ column be $(s_{i,j})$,
where  
\[ g e_i = \sum_j s_{i,j} e_j. \]
Conversely, given $\rh$, write $\rh(g) = (s_{i,j})$ and define $ge_i$
by the same formula. From either starting point, we have $g e_i = \rh(g)(e_i)$.
For a second element
$h\in G$, write $\rh(h) = (t_{i,j})$, where $\rh(h)$ is either determined by an 
$R_G[G]$-module structure or is given by a crossed homomorphism $\rh$.  
Since $g\,r = r^g\, g$ in $R_G[G]$ and $g\, (r_{i,j}) = (r_{i,j}^g)$ in
$GL(n,R)$, the relation $(gh) e_i = g(h e_i)$ required of 
an $R_G[G]$-module is the same as the relation 
$\rh(g)\rh(h)(e_i) = \rh(g) (g\rh(h))(e_i)$ 
required of a crossed homomorphism.  Indeed, $(gh) e_i = \rh(gh)(e_i)$ and
\begin{eqnarray*}  
g (he_i) & = & g \rh(h)(e_i) = \sum_j \, g(t_{i,j}e_j) \\
& = & \sum_j\, t_{i,j}^g ge_j = \sum_j \sum_k t_{i,j}^g s_{j,k} e_k \\
& = & \rh(g)(\sum_j\, t_{i,j}^g e_j) = \rh(g) (g \rh(h)(e_i). \\
\end{eqnarray*}
The remaining compatibilities, in particular for the transitivity relation 
required of a module, are equally straightforward verifications, as is the
verification of the statement about isomorphisms.
\end{proof}  

The following easy observation specifies the permutation skew representations.  
For a set $A$, let $R[A]$ denote the free $R$-module on the basis $A$. 

\begin{prop}\mylabel{permrep}  Let $A$ be a $G$-set and define
\[  g ( \sum_a\,r_a a) =  \sum_a r^g_a ga \]
for $g\in G$, $r_a\in R$, and $a\in A$. Then $R[A]$ is an $R_G[G]$-module.
\end{prop}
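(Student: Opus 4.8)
The plan is to verify directly that the formula $g(\sum_a r_a a) = \sum_a r_a^g\, ga$ defines a left $R_G[G]$-module structure on $R[A]$, which amounts to checking three things: that each $g$ acts additively, that the action is compatible with the $R$-module structure in the sense that $g(rm) = r^g(gm)$, and that the $G$-action is itself a group action (the transitivity relation $(gh)m = g(hm)$ together with $em = m$). By \myref{CrossR}, an equivalent and perhaps cleaner route is to exhibit the crossed homomorphism $\rho\colon G\rtarr GL(\#A,R)$ that the claimed module structure induces and observe it really is a crossed homomorphism; but since $A$ may be infinite I would phrase the argument in the first, coordinate-free way and simply remark on the finite case as an instance of \myref{CrossR}.

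\begin{proof}
Additivity of each $g$ is immediate from the formula, as is $R$-additivity, since $(r+s)^g = r^g + s^g$ because $g$ is a ring automorphism of $R$. For the twisted compatibility with scalars, let $r\in R$ and $m = \sum_a r_a a$. Then
\[ g(rm) = g\bigl(\sum_a (r r_a)\, a\bigr) = \sum_a (r r_a)^g\, ga = \sum_a r^g r_a^g\, ga = r^g \sum_a r_a^g\, ga = r^g(gm), \]
using that $(-)^g$ is multiplicative. For the group action property, $e$ acts as the identity automorphism of $R$ and fixes every element of $A$, so $em = m$. For $g,h\in G$ and $m = \sum_a r_a a$,
\[ g(hm) = g\bigl(\sum_a r_a^h\, ha\bigr) = \sum_a (r_a^h)^g\, g(ha) = \sum_a r_a^{gh}\, (gh)a = (gh)m, \]
where we used $(r_a^h)^g = r_a^{gh}$ (the stated identity $r^{gh} = (r^h)^g$) and that $A$ is a $G$-set so $g(ha) = (gh)a$. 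Thus $R[A]$ is a left $k[G]$-module and a left $R$-module satisfying $g(rm) = r^g(gm)$, which is exactly the structure of a left $R_G[G]$-module: the relation $g\,r = r^g\,g$ in $R_G[G]$ acts as just computed, and the remaining module axioms (associativity and unitality over $R$, over $k[G]$, and their mixed associativity) are inherited from those of $R[A]$ as an $R$-module and of $A$ as a $G$-set.
\end{proof}

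I expect no serious obstacle here; the only point requiring the slightest care is bookkeeping the direction of the twist — ensuring one writes $r_a^g$ rather than $r_a^{g^{-1}}$ so that the transitivity relation comes out as $r_a^{gh} = (r_a^h)^g$ in agreement with the convention $r^{gh} = (r^h)^g$ fixed just above \myref{CrossR}. In the finite case this is precisely the content of \myref{CrossR} applied to the permutation crossed homomorphism $\rho$ with $\rho(g)$ the matrix of the bijection $a\mapsto ga$, so \emph{Proposition \myref{permrep}} can also be read as identifying which crossed homomorphisms $G\rtarr GL(n,R)$ arise from $G$-sets.
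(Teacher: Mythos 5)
Your verification is correct: the twisted scalar compatibility $g(rm)=r^g(gm)$ and the transitivity $(gh)m=g(hm)$ are exactly the two relations $g\,r=r^g\,g$ and $r^{gh}=(r^h)^g$ that must be checked, and your reindexing in the computation of $g(hm)$ is sound. The paper states this as an ``easy observation'' and supplies no proof, so your direct coordinate-free check is precisely the argument intended, and your closing remark connecting the finite case to \myref{CrossR} via the permutation crossed homomorphism matches the corollary the paper draws immediately afterward.
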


In view of  \myref{CrossR}, this has the following immediate consequence.

\begin{cor}  For a $G$-ring $R$, any $n$-pointed $G$-set $A$ canonically
gives rise to a crossed homomorphism $\rh_A\colon G\rtarr GL(n,R)$.
\end{cor}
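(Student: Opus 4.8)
The plan is to obtain $\rh_A$ as the composite of two facts already established: the permutation-module construction of \myref{permrep} and the dictionary between $R_G[G]$-module structures and crossed homomorphisms of \myref{CrossR}. First I would pin down what an $n$-pointed $G$-set means in this context: a $G$-set $A$ of cardinality $n$ together with a chosen bijection $\al\colon \mathbf{n}\rtarr A$, exactly as for the objects appearing in \myref{EGJ} and \myref{RightE}. This ordering is the essential point, since it is what upgrades ``a free $R$-module of rank $n$'' to ``the $R$-module $R^n$ with its standard basis $\{e_i\}$'', which is the input \myref{CrossR} needs in order to return an actual matrix-valued $\rh$ rather than merely a $GL(n,R)$-conjugacy class.

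With that convention fixed, the argument runs as follows. By \myref{permrep} the free $R$-module $R[A]$ is an $R_G[G]$-module; the bijection $\al$ gives an $R$-module isomorphism $R^n\iso R[A]$ via $e_i\mapsto \al(i)$, along which I transport this $R_G[G]$-structure to $R^n$; and by \myref{CrossR} that structure on $R^n$ is the one attached to a unique crossed homomorphism $\rh_A\colon G\rtarr GL(n,R)$, characterized by $g e_i = \rh_A(g)(e_i)$. Since every step uses only the data $(A,\al)$, the assignment $A\mapsto \rh_A$ is canonical, as asserted.

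I would then unwind $\rh_A$ explicitly, both as a check and to make clear that nothing delicate is hidden. In $R[A]$ the element $g$ carries the basis vector $\al(i)$ to the basis vector $g\al(i)$ --- the scalar $1\in R$ is $G$-fixed --- and since $gA=A$ there is a unique $\si_g(i)\in\mathbf{n}$ with $g\al(i)=\al(\si_g(i))$; hence $\rh_A(g)$ is the permutation matrix of $\si_g\in\SI_n$, and $g\mapsto\si_g$ is the honest homomorphism $G\rtarr\SI_n$ recording the $G$-action on the ordered set $A$. Because the entries $0$ and $1$ of a permutation matrix lie in $R^G$, we have $g\cdot\rh_A(h)=\rh_A(h)$, so the crossed-homomorphism identity $\rh_A(gh)=\rh_A(g)\,(g\cdot\rh_A(h))$ collapses to ordinary multiplicativity of permutation matrices; thus $\rh_A$ is in fact a genuine homomorphism.

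I do not expect a genuine obstacle: the statement is, as the text says, immediate from \myref{permrep} and \myref{CrossR}. The only thing requiring care is the bookkeeping in the previous paragraph --- keeping the two actions of $G$ apart (translation $a\mapsto ga$ on $A$ versus $r\mapsto r^g$ on $R$), and remembering that it is precisely the ``$n$-pointed'' structure that makes $\rh_A$ well defined on the nose rather than only up to conjugacy.
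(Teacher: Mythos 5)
Your proposal is correct and follows exactly the route the paper intends: the corollary is stated as an immediate consequence of \myref{permrep} combined with \myref{CrossR}, which is precisely the composite you describe (transport the $R_G[G]$-structure on $R[A]$ to $R^n$ via the chosen ordering, then read off $\rh_A$). Your explicit unwinding of $\rh_A$ as a permutation matrix, and the observation that it is therefore an honest homomorphism, is a correct bonus beyond what the paper records.
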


We shall need to embed skew representations in permutation skew representations
to apply these notions in equivariant bundle (or covering space) theory.  Of
course, in classical representation theory over $\bC$, every representation embeds
in a permutation representation. We need an analogue for skew representations.

\begin{defn} A $G$-ring $R$ is amenable if there is a monomorphism of 
$R_G[G]$-modules that embeds any finite dimensional skew representation 
of $G$ over $R$ into a finite dimensional permutation skew representation.
\end{defn} 

\begin{exmp} Let $G$ act trivially on $\mathbf{n}=\{1,\cdots,n\}$. The trivial permutation skew representation $R[\mathbf{n}]$ is the $R_G[G]$-module 
corresponding to the trivial crossed homomorphism $\epz\colon G\rtarr GL(n,R)$.
Thus, when $H^1(G;GL(n,R))= [\epz]$ for all $n$, every skew representation 
of $G$ over $R$ is isomorphic to a permutation skew representation and $R$ is 
amenable. This holds, for example, when $G$ is the Galois group of a Galois 
extension $R=K$ over a field $k$.
\end{exmp}

More generally, we have the following analogue of the situation in classical representation theory.  It shows that amenability is not an unduly restrictive
condition. The following result is in Passman \cite[4.1 in Chapter 1]{Pass}.
Even in this generality, he ascribes it to Maschke. 

\begin{lem} Let $N\subset M$ be $R_G[G]$-modules with no $|G|$-torsion.  If
$M = N\oplus V$ as an $R$-module, then there is an $R_G[G]$-submodule $P\subset M$
such that $|G|M\subset N\oplus P$. 
\end{lem}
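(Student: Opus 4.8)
The plan is to run the classical Maschke averaging argument, taking care with the skew multiplication $g\,r = r^{g}\,g$ of $R_G[G]$. First I would use the $R$-module splitting $M = N\oplus V$ to fix the $R$-linear idempotent $\pi\colon M\rtarr M$ with image $N$ and $\pi|_{N} = \mathrm{id}_{N}$, and then define the averaged operator
\[ \bar{\pi}\colon M\rtarr M,\qquad \bar{\pi}(m) = \sum_{g\in G} g\,\pi(g^{-1}m). \]
Because $N$ is an $R_G[G]$-submodule it is stable under the $G$-action, so each summand $g\,\pi(g^{-1}m)$ lies in $N$; hence $\bar{\pi}$ has image contained in $N$. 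Moreover, for $n\in N$ we have $g^{-1}n\in N$ and $\pi(g^{-1}n) = g^{-1}n$, so $\bar{\pi}(n) = \sum_{g} g(g^{-1}n) = |G|\,n$; that is, $\bar{\pi}|_{N} = |G|\cdot\mathrm{id}_{N}$.

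The heart of the argument is to verify that $\bar{\pi}$ is a homomorphism of $R_G[G]$-modules. For $R$-linearity I would use that $g^{-1}r = r^{g^{-1}}g^{-1}$ in $R_G[G]$ (immediate from $g\,r = r^{g}\,g$), so that $g^{-1}(rm) = r^{g^{-1}}(g^{-1}m)$ and hence $\pi(g^{-1}(rm)) = r^{g^{-1}}\pi(g^{-1}m)$ by $R$-linearity of $\pi$; applying $g$ and using $g\,s = s^{g}\,g$ with $s = r^{g^{-1}}$ together with $(r^{g^{-1}})^{g} = r$ gives $g\,\pi(g^{-1}(rm)) = r\,\bigl(g\,\pi(g^{-1}m)\bigr)$, and summing over $g$ yields $\bar{\pi}(rm) = r\,\bar{\pi}(m)$. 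For $G$-linearity I would reindex the defining sum: replacing $g$ by $hg$ shows $\bar{\pi}(hm) = h\,\bar{\pi}(m)$. No torsion hypothesis is used here, and since $G$ is finite (as assumed in this section) the sum is finite, so every manipulation is legitimate.

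Finally I would take $P = \ker\bar{\pi}$, which is an $R_G[G]$-submodule of $M$ once $\bar\pi$ is known to be $R_G[G]$-linear. For $m\in M$ one computes $\bar{\pi}\bigl(|G|\,m - \bar{\pi}(m)\bigr) = |G|\,\bar{\pi}(m) - \bar{\pi}(\bar{\pi}(m)) = |G|\,\bar{\pi}(m) - |G|\,\bar{\pi}(m) = 0$, using additivity of $\bar{\pi}$, the fact that $\bar{\pi}(m)\in N$, and $\bar{\pi}|_{N} = |G|\cdot\mathrm{id}_{N}$; hence $|G|\,m = \bar{\pi}(m) + \bigl(|G|\,m - \bar{\pi}(m)\bigr)\in N + P$, so $|G|M\subseteq N + P$. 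The no-$|G|$-torsion hypothesis is exactly what makes this sum direct: if $n\in N\cap P$ then $0 = \bar{\pi}(n) = |G|\,n$, forcing $n = 0$, so $N\cap P = 0$ and $|G|M\subseteq N\oplus P$. I expect the only genuine obstacle to be the bookkeeping in the $R_G[G]$-linearity check — keeping the conjugations $r\mapsto r^{g}$ in the correct positions when $R$ is noncommutative — while everything else is the standard Maschke computation transcribed essentially verbatim.
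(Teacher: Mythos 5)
Your proof is correct. The paper itself supplies no argument for this lemma — it simply cites Passman (Chapter 1, Lemma 4.1), where the result is proved by exactly the averaging you carry out — so your writeup fills in the omitted details faithfully: the operator $\bar{\pi}(m)=\sum_{g}g\,\pi(g^{-1}m)$, the verification that the twisted relation $g\,r=r^{g}\,g$ still makes $\bar{\pi}$ a map of $R_G[G]$-modules, the choice $P=\ker\bar{\pi}$, and the use of the no-$|G|$-torsion hypothesis to get $N\cap P=0$ are all exactly what is needed.
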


An irreducible skew representation is one that has no
non-trivial proper skew subrepresentations.  

\begin{thm} Suppose that $R$ is semisimple and $|G|^{-1}\in R$. Then every
$R_G[G]$-module is completely reducible and $R$ is amenable.
\end{thm}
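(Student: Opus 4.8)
\emph{Proof idea.} The plan is to prove the two assertions separately, deriving complete reducibility from the Maschke-type Lemma above and then reducing amenability to the observation that the regular skew representation is already a permutation skew representation.

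For complete reducibility I would show directly that every $R_G[G]$-submodule $N$ of every $R_G[G]$-module $M$ is an $R_G[G]$-direct summand; by the usual criterion for semisimplicity (a module is completely reducible precisely when every submodule is a direct summand) this proves that every $R_G[G]$-module is completely reducible, i.e.\ that $R_G[G]$ is a semisimple ring. Given $N\subset M$, semisimplicity of $R$ lets us write $M=N\oplus V$ as $R$-modules. Since $|G|^{-1}\in R$, the element $|G|\cdot 1_R$ (which is central in $R$ and fixed by $G$) is a central unit of $R_G[G]$, so multiplication by $|G|$ is an automorphism of any $R_G[G]$-module; in particular no $R_G[G]$-module has $|G|$-torsion, so the Lemma above applies and yields an $R_G[G]$-submodule $P\subset M$ with $|G|M\subset N\oplus P$. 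Because $|G|M=M$, this forces $M=N\oplus P$ as $R_G[G]$-modules.

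For amenability, the crucial observation is that $R_G[G]$, regarded as a left module over itself, is a finite dimensional permutation skew representation: it is $R$-free with basis $G$, and the defining relation $g\,r=r^g\,g$ shows that on this basis the $G$-action is exactly the one of \myref{permrep} for the finite $G$-set $G$ with $G$ acting on itself by left translation. Hence $R_G[G]\cong R[G]$, and therefore $R_G[G]^{\,m}\cong R\big[\coprod_{i=1}^{m}G\big]$ as $R_G[G]$-modules, the right side being a finite dimensional permutation skew representation. Now if $M$ is any finite dimensional skew representation, say $R$-free of rank $n$, then $M$ is generated over $R$, hence over $R_G[G]$, by $n$ elements, so there is a surjection $R_G[G]^{\,n}\twoheadrightarrow M$ of $R_G[G]$-modules. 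By the first part its kernel is a direct summand, so $M$ is isomorphic to an $R_G[G]$-submodule of $R_G[G]^{\,n}\cong R\big[\coprod_{i=1}^{n}G\big]$, giving the required embedding into a finite dimensional permutation skew representation. Thus $R$ is amenable.

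I expect no essential difficulty: the splitting of $R_G[G]^{\,n}\twoheadrightarrow M$ needed for amenability is exactly the content of the first part, which is itself the Maschke-type Lemma together with the invertibility of $|G|$. The only point calling for care is the bookkeeping of the twisted $R$-actions — checking that $rg\mapsto r[g]$ really is an isomorphism of $R_G[G]$-modules (it intertwines $h\cdot(rg)=r^h(hg)$ with the translation action of \myref{permrep}), and that every step keeps the $R$-rank finite, which it does since $n$ generators over $R$ produce a finite free cover $R_G[G]^{\,n}\twoheadrightarrow M$.
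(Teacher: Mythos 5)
Your proof is correct, and the first half (complete reducibility) follows the paper exactly: the Maschke-type lemma plus the observations that $|G|\cdot 1_R$ is a central unit of $R_G[G]$ (so no module has $|G|$-torsion and $|G|M=M$) — points the paper leaves implicit but which you rightly spell out. Where you genuinely diverge is the amenability half. The paper argues bottom-up: decompose $M$ into irreducibles, note that each irreducible $N$ receives a surjection $R_G[G]\to N$ from the regular module (send $1$ to any $n\neq 0$), split off the kernel, and conclude that $M$ is a summand of a direct sum of copies of the permutation skew representation $R_G[G]$. You argue top-down: an $R$-basis of $M$ gives a surjection $R_G[G]^{\,n}\twoheadrightarrow M$, which splits by complete reducibility, embedding $M$ directly into $R[\coprod_{i=1}^n G]$. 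Both routes hinge on the same two pillars — the splitting of submodules and the identification of the regular module $R_G[G]$ with the permutation skew representation $R[G]$ for left translation, which you verify explicitly from $g\,r=r^g\,g$ while the paper takes it for granted. Your version avoids any appeal to irreducible modules and gives the explicit bound that a rank-$n$ skew representation embeds in $R[G]^{\oplus n}$; the paper's version makes visible the finer structure (every skew representation is built from irreducible summands of the regular module), which is closer in spirit to classical representation theory. Either is acceptable; yours is marginally more elementary and more careful about the hypotheses of the lemma.
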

\begin{proof} By the lemma, if $N\subset M$, then $M=N\oplus P$. That is, the
complete reducibility of $R$-modules implies the complete reducibility of
$R_G[G]$-modules.  If $N$ is an irreducible $R_G[G]$-module, then any choice
of an element $n\neq 0$ determines a map of $R_G[G]$-modules $f\colon R_G[G]\rtarr N$
such that $f(1) = n$. The image of $f$ is a submodule of $N$, and it is all of $N$
since $N$ is irreducible. By complete reducibility, $\Ker(f)$ has a complement in
$R_G[G]$, and that complement must be isomorphic to $N$.  Thus $N$ is a direct 
summand of the permutation skew representation $R_G[G]$. Therefore, by complete reducibility, 
all skew representations are direct summands of permutation skew representations.
\end{proof} 

\subsection{A model $\widetilde{\sG\! \sL}_G(n,R)$ for $E(G,GL(n,R)_G)$}

Again let $R$ be a $G$-ring, and assume that $R$ is amenable.  
We have the entrywise left action of $G$ on $GL(n,R)$, and we have the right action
of $GL(n,R)$ on $GL(n,R)$ given by matrix multiplication.  

\begin{lem} The left action of $G$ and the right action of $GL(n,R)$
on $GL(n,R)$ specify an action of $GL(n,R) \rtimes G$ on $GL(n,R)$ via
$(\ta,g)(x) = (gx)\ta^{-1}$ for $g\in G$, $x\in GL(n,R)$, and $\ta\in GL(n,R)$.  
\end{lem}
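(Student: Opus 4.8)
The plan is to observe that this lemma is simply the specialization to $\PI = GL(n,R)$ of the general discussion of $\GA$-actions in \S\ref{Sec3}. Recall from there that a left $G$-action together with a right $\PI$-action on an object $X$ assemble into an action of $\GA = \PI\rtimes G$ via the formula (\ref{action}), $(\ta,g)x = (gx)\ta^{-1}$, precisely when the twisted commutation relation (\ref{reaction}), $g(x\ta) = (gx)(g\cdot \ta)$, holds, where $g\cdot(-)$ denotes the given $G$-action on $\PI$. Thus the only thing to verify is that (\ref{reaction}) holds in the case at hand, with $X = GL(n,R)$, with the right $\PI$-action given by matrix multiplication, and with $g\cdot(-)$ the entrywise action $g\colon GL(n,R)\rtarr GL(n,R)$ induced by the ring automorphism $r\mapsto r^g$ of $R$.

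First I would write both sides of (\ref{reaction}) entrywise. For $x = (x_{ij})$ and $\ta = (\ta_{ij})$ in $GL(n,R)$, the $(i,j)$ entry of $g(x\ta)$ is $\bigl(\sum_k x_{ik}\ta_{kj}\bigr)^g$, while the $(i,j)$ entry of $(gx)(g\cdot\ta)$ is $\sum_k x_{ik}^g\,\ta_{kj}^g$. These agree because each $g\colon R\rtarr R$ is a ring homomorphism, hence additive and multiplicative; this is the only substantive point, and it is immediate. I would also record the two boundary cases showing that the formula $(\ta,g)(x) = (gx)\ta^{-1}$ restricts to the prescribed actions on the subgroups $GL(n,R) = GL(n,R)\times e$ and $G = e\times G$ of $GL(n,R)\rtimes G$: setting $g = e$ recovers $x\mapsto x\ta^{-1}$, the right multiplication action turned into a left action exactly as in (\ref{action}), and setting $\ta = e$ recovers the entrywise action $x\mapsto gx$.

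There is no real obstacle here; the entire content is captured by the ring-homomorphism property of each $g\colon R\rtarr R$, just as in the proof of \myref{action2}. The only thing that requires mild care is the bookkeeping of left versus right and of the order in which $G$ and $GL(n,R)$ act — this is the reason such identifications ``must not be viewed as a tautology'' — but once the entries are written out as above the verification is forced.
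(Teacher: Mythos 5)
Your proposal is correct and follows exactly the paper's route: the paper's proof consists of the single observation that the required twisted commutation relation $g\cdot(x\ta) = (g\cdot x)(g\cdot\ta)$ is immediate because each $g\colon R\rightarrow R$ is a ring automorphism, which is precisely the entrywise computation you carry out. Your version merely makes explicit the reduction to relation (\ref{reaction}) and the check on the two subgroups, both of which the paper leaves implicit.
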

\begin{proof} The required relation $g\cdot(x\ta) = (g\cdot x)(g\cdot \ta)$
is immediate from the fact that $g\colon R\rtarr R$ is an automorphism of rings.
\end{proof}

Recall the $G$-set $U$ from \myref{UU}. By \myref{permrep}, $R[U]$ is an 
$R_G[G]$-module with 
\begin{equation}\label{R[A]}
g\cdot (ru) = r^g gu\quad \mbox{for $g\in G$, $r\in R$ and $u\in U$.}
\end{equation}
Similarly, we have the entrywise (equivalently, diagonal) left action of $g$ on 
$R^n$,  $g\cdot (re_i) = r^g e_i$, where we think of $G$ as acting trivially on the 
set $\{e_i\}$. Regard elements $\ta\in GL(n,R)$ as homomorphisms 
$\ta\colon R^n\rtarr R^n$.  That fixes the left action of $GL(n,R)$ on $R^n$ 
given by matrix multiplication, where 
elements of $R^n$ are thought of as row matrices.

\begin{defn}\mylabel{ExamR} We define the chaotic general linear category 
$\widetilde{\sG\! \sL}_G(n,R)$. The objects of $\widetilde{\sG\! \sL}_G(n,R)$ are the
monomorphisms of left $R$-modules $\al\colon R^n\rtarr R[U]$.  Let $G$ act
from the left on objects by $g\al = g\com \al\com g^{-1}$.  By (\ref{R[A]}), 
we have
\begin{eqnarray*}\label{galpha}
(g\com \al\com g^{-1})(\sum r_i e_i) &=&
  \sum_i (g\com \al)  (r_i^{g^{-1}}\, e_i)) = \sum_i g(r_i^{g^{-1}})\al(e_i)\\
&= & \sum_i r_i^{g^{-1}g} (g\cdot \al(e_i))
 =  \sum_i r_i (g\cdot \al(e_i)).\\
\end{eqnarray*}
In particular, $(g\al)(e_i) = g\cdot \al(e_i)$.
Let $GL(n,R)$ act from the right on objects by  
$\al\ta = \al\com \ta\colon R^n\rtarr R[U]$; this uses the left,
not the right, action of $GL(n,R)$ on $R^n$.  Since $\widetilde{\sG\! \sL}_G(n,R)$ is chaotic, 
this fixes the actions on the morphism set, which the map $(S,T)$ identifies with the product 
of two copies of the object set.
\end{defn}

\begin{prop} The actions of $G$ and $GL(n,R)$ on $\widetilde{\sG\! \sL}_G(n,R)$
determine a left action of $GL(n,R) \rtimes G$ via 
\[ (\ta,g)\al  = (g\al)\ta^{-1}. \]
The classifying space 
$|N\widetilde{\sG\! \sL}_G(n,R)|$ is a universal principal $(G,GL(n,R)_{G})$-bundle.
\end{prop}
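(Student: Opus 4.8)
The plan is to follow the template of \myref{GSIHtpyType} and \myref{universal}, using \myref{LaMay} as the criterion for universality. First I would check that $(\ta,g)\al=(g\al)\ta^{-1}$ defines a left action of $\GA:=GL(n,R)\rtimes G$. By the discussion around (\ref{reaction}) (and exactly as in \myref{action2}), such a formula defines a $\GA$-action precisely when the left $G$-action $g\al=g\com\al\com g^{-1}$ and the right $GL(n,R)$-action $\al\ta=\al\com\ta$ satisfy the twisted commutation relation; and since $g$ acts entrywise on both $R[U]$ and $R^n$, hence through ring automorphisms, $g\com(\al\com\ta)\com g^{-1}=(g\com\al\com g^{-1})\com(g\com\ta\com g^{-1})$, which is exactly $g(\al\ta)=(g\al)(g\cdot\ta)$ with $g\cdot\ta$ the entrywise action of $G$ on $GL(n,R)$. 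As $\widetilde{\sG\sL}(n,R)$ is chaotic, this determines the action on morphisms. The $GL(n,R)$-action is free: $\al\ta=\al$ forces $\al\com\ta=\al$, whence $\ta=e$ since $\al$ is a monomorphism. Thus, writing $E:=|N\widetilde{\sG\sL}(n,R)|$, the quotient $E\rtarr E/GL(n,R)$ is a principal $GL(n,R)$-bundle whose action extends to $\GA$, i.e.\ a principal $(G,GL(n,R)_G)$-bundle; since $G$ is finite and $\widetilde{\sG\sL}(n,R)$ is untopologized, $E$ is a $G$-CW complex, so the numerability and regularity hypotheses of \myref{LaMay} hold automatically.

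By \myref{LaMay} it then remains to prove that $E^\LA$ is contractible for every subgroup $\LA\subset\GA$ with $\LA\cap GL(n,R)=e$. Any such $\LA$ maps isomorphically onto a subgroup of $G$ under $q$, so $\LA$ is finite and $E^\LA\iso|N(\widetilde{\sG\sL}(n,R)^\LA)|$. The fixed subcategory $\widetilde{\sG\sL}(n,R)^\LA$ is again chaotic: its object set is $\sO b^\LA$ and, $\LA$ acting diagonally, its morphism set is $(\sO b\times\sO b)^\LA=\sO b^\LA\times\sO b^\LA$. Hence, by \myref{chacat}, $E^\LA$ is contractible as soon as $\widetilde{\sG\sL}(n,R)^\LA$ has an object. (When $\LA\cap GL(n,R)\neq e$, freeness shows there is no such object, but those $\LA$ are not at issue.)

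So fix $\LA$ with $\LA\cap GL(n,R)=e$. By \myref{crosssubgp}, $\LA=\LA_\rho=\{(\rho(h),h)\mid h\in H\}$ for a subgroup $H\subset G$ and a crossed homomorphism $\rho\colon H\rtarr GL(n,R)$. Unwinding $(\ta,g)\al=(g\al)\ta^{-1}$, an object $\al$ is $\LA_\rho$-fixed exactly when $h\com\al\com h^{-1}=\al\com\rho(h)$ for all $h\in H$, and by \myref{CrossR} applied to $H$ this is precisely the assertion that $\al\colon R^n\rtarr R[U]$ is a homomorphism of $R_H[H]$-modules, where $R^n$ carries the skew module structure determined by $\rho$ and $R[U]$ carries the permutation skew structure of \myref{permrep}, restricted to $H$. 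Now I would invoke amenability: it furnishes an $R_H[H]$-monomorphism of $R^n$ into a finite permutation skew representation $R[A]$, $A$ a finite $H$-set. (Amenability is stated for $G$, but the statement for $H$ follows by induction: $R_G[G]\otimes_{R_H[H]}R^n$ is a finite skew representation of $G$, amenability embeds it $R_G[G]$-linearly into some $R[B]$, and restricting to $H$ along the split inclusion $R^n\hookrightarrow R_G[G]\otimes_{R_H[H]}R^n$ and using that $R[B]$ restricts to the permutation skew representation on the underlying $H$-set of $B$ yields the required embedding.) Since $U$ contains countably many copies of each orbit $G/K$ by \myref{UU}, its underlying $H$-set contains an isomorphic copy of $A$, which gives an $R_H[H]$-monomorphism $R[A]\hookrightarrow R[U]$ of permutation skew representations. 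The composite $\al\colon R^n\hookrightarrow R[A]\hookrightarrow R[U]$ is then an $R$-module monomorphism, hence an object of $\widetilde{\sG\sL}(n,R)$, and by construction a $\LA_\rho$-fixed one. This supplies the object needed in the previous paragraph and completes the argument.

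I expect the third step to be the main obstacle — both pinning down the conventions so that ``$\LA_\rho$-fixed object'' is correctly identified, via \myref{CrossR}, with ``$R_H[H]$-linear monomorphism $R^n\rtarr R[U]$'', and checking that amenability of $R$ as a $G$-ring really delivers the desired embedding for an arbitrary subgroup $H$. Everything else is routine bookkeeping within the framework already set up in \S\ref{Secnerve} and around \myref{LaMay}.
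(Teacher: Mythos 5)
Your proposal is correct and follows essentially the same route as the paper: verify the twisted commutation relation $g(\al\ta)=(g\al)(g\cdot\ta)$, note freeness and chaoticity so that everything reduces to producing a $\LA_{\rh}$-fixed object, and then construct one via \myref{crosssubgp}, \myref{CrossR}, and amenability. The only place you go beyond the paper is your induction argument deriving the required $R_H[H]$-embedding from amenability of $R$ as a $G$-ring — the paper invokes amenability for the subgroup $H$ without comment — so this is a welcome patch of a small gap rather than a genuinely different approach.
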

\begin{proof} For the first claim, we must show that $g(\al\ta) = (g\al)(g\cdot \ta)\colon R^n\rtarr R[U]$ for 
$\al\colon R^n\rtarr R[U]$, $g\in G$, and $\ta = (t_{i,j})\in GL(n,R)$. On elements 
$e_i$, 
\begin{eqnarray*}
g(\al\ta)(e_i) & = & g\cdot (\al\ta)(e_i)
= g\cdot (\al (\sum_j t_{i,j}e_j) \\
&=& g\cdot \sum_j (t_{i,j} \al(e_j)) 
= \sum_j t_{i,j}^g(g\cdot \al(e_j)) \\
&=& (g\al)(\sum_j t_{i,j}^g\, e_j) = (g\al)(g\cdot \ta)(e_i). \\
\end{eqnarray*}
For each free $R$-module $M\subset R[U]$, choose an $R$-linear isomorphism
$\et_M\colon R^n\rtarr M$.  Sending $\al\colon R^n\rtarr M$ to $\et_M^{-1}\com\al$ 
specifies an isomorphism of right $GL(n,R)$-sets from the set of objects 
$\al$ with image $M$ to $GL(n,R)$; the inverse sends $\ta\in GL(n,R)$ to 
$\et_M\com \ta$. Therefore $GL(n,R)$ acts freely on $\widetilde{\sG\! \sL}_G(n,R)$. 
Since $\widetilde{\sG\! \sL}_G(n,R)$ is chaotic, it only remains to show that the set of 
objects of $\widetilde{\sG\! \sL}_G(n,R)^{\LA}$ is non-empty if $\LA\cap GL(n,R)=\{e\}$.  
By \myref{crosssubgp}, $\LA =\{(\rh(h),h)|h\in H\}$, where $H$ is a subgroup of $G$ 
and $\rh\colon H\rtarr GL(n,R)$ is a crossed homomorphism.
%, so that $\rh(hk)  = \rh(h) (h\cdot \rh(k))$.

By \myref{CrossR}, we may use $\rh$ to endow $R^n$ with a structure of left 
$R_H[H]$-module. By the assumed amenability of $R$, there is a monomorphism of 
left $R_H[H]$-modules $R^n\rtarr R[A]$ for some finite $H$-set $A$.  We can 
embed $A$ in the finite $G$-set $B =G\times_H A$ and then $B$ is isomorphic 
to a sub $G$-set of $U$.  This fixes a monomorphism $\al\colon R^n\rtarr R[U]$ 
of left $R_H[H]$-modules.  Writing $\rh(h) = (s_{i,j})$ and $\rh(h)^{-1} = (t_{i,j})$,
we have
\[ h\al(e_j) = \al(\rh(h)(e_j)) = \al(\sum_k s_{j,k} e_k)
= \sum_k s_{j,k}\al(e_k)\]
and therefore, using the display in \myref{ExamR},
\[ ((h\al)\rh(h)^{-1})(e_i)  =  (h\al)(\sum_j t_{i,j}e_j)  
=  \sum_j t_{i,j} h\cdot \al(e_j)
=  \sum_j\sum_k t_{i,j}s_{j,k} \al(e_k) =  \al(e_i). \qedhere \]
\end{proof}

\begin{defn}\mylabel{ExamR2} Define $\sG\! \sL_G(n,R)$ to be the orbit
$G$-category $\widetilde{\sG\! \sL}_G(n,R)/GL(n,R)$. 
\end{defn}

The classifying space $|N\sG\! \sL_G(n,R)|$ is a model for $B(G,GL(n,R)_G)$.  
Up to isomorphism, the $G$-category $\sG\! \sL_G(n,R)$ admits the following 
explicit description. 

\begin{lem} The objects of $\sG\! \sL_G(n,R)$ are the $n$-dimensional
free $R$-submodules $M$ of $R[U]$.
The morphisms $\al\colon M\rtarr N$ are the isomorphisms of 
$R$-modules. The group $G$ acts by translation on objects, so  
that $gM = \{gm\,|\,m\in M\}$, and by conjugation on morphisms, 
so that $(g\al)(gm) = \al(m)$ for $m\in M$ and $g\in G$.
\end{lem}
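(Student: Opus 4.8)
The plan is to run the argument of \myref{RightE} essentially verbatim, replacing ``$n$-pointed subset of $U$'' by ``$n$-dimensional free $R$-submodule of $R[U]$'' and ``bijection'' by ``$R$-module isomorphism'' throughout. First I would identify the objects. By \myref{ExamR} the objects of $\widetilde{\sG\sL}(n,R)$ are the left $R$-module monomorphisms $\al\colon R^n\rtarr R[U]$, and $GL(n,R)$ acts on them freely from the right by precomposition, $\al\ta=\al\com\ta$; two objects lie in the same orbit precisely when they have the same image. Such an image is exactly an $n$-dimensional free $R$-submodule $M\subset R[U]$, and conversely every such $M$ occurs (pick any $R$-basis), so passage to orbits identifies the object set of $\sG\sL_G(n,R)=\widetilde{\sG\sL}_G(n,R)/GL(n,R)$ (\myref{ExamR2}) with the set of these submodules $M$; the base isomorphisms $\et_M\colon R^n\rtarr M$ chosen in the proof of the preceding proposition give orbit representatives.

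Next I would identify the morphisms. Since $\widetilde{\sG\sL}(n,R)$ is chaotic, between any two objects there is a unique morphism and $(S,T)$ identifies its morphism set with the square of its object set; hence a morphism from the orbit $M$ to the orbit $N$ in $\sG\sL_G(n,R)$ is a $GL(n,R)$-orbit, for the diagonal action, of a pair $(\be,\al)$ with $\al$ an object of image $M$ and $\be$ an object of image $N$. To such an orbit I would associate the $R$-module isomorphism $\be\com\al^{-1}\colon M\rtarr N$ (inverting $\al$ onto its image); this is manifestly unchanged if $(\be,\al)$ is replaced by $(\be\ta,\al\ta)$, so it depends only on the orbit. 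One then checks, exactly as in \myref{RightE}, that $(\be,\al)\mapsto\be\com\al^{-1}$ is a bijection from morphisms $M\rtarr N$ in $\sG\sL_G(n,R)$ onto $R$-module isomorphisms $M\rtarr N$ --- surjectivity and injectivity being immediate once one normalizes the representatives to the form $(\et_N\com\si,\et_M)$ with $\si\in GL(n,R)$ --- that it carries identities to identities, and that, after conjugating one representative so that the target of the first morphism agrees with the source of the second (composition in the chaotic category $\widetilde{\sG\sL}(n,R)$ being forced), it carries composition to ordinary composition of isomorphisms.

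Finally I would track the $G$-action. By \myref{ExamR}, $G$ acts on an object by conjugation, $g\al=g\com\al\com g^{-1}$; since the entrywise action of $g^{-1}$ is a bijection of $R^n$, the image of $g\al$ is $gM=\{gm\mid m\in M\}$, so $G$ acts on objects by translation. On morphisms $G$ acts diagonally, so a morphism $M\rtarr N$ with associated isomorphism $\phi=\be\com\al^{-1}$ is carried to the morphism $gM\rtarr gN$ whose associated isomorphism is $(g\be)\com(g\al)^{-1}=g\com(\be\com\al^{-1})\com g^{-1}=g\com\phi\com g^{-1}$; that is, the $G$-action on morphisms is the conjugation action $g\phi=g\com\phi\com g^{-1}\colon gM\rtarr gN$, which is the assertion of the lemma.

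I do not expect a genuinely hard step: the statement is pure chaotic-category bookkeeping, even more so than \myref{RightE} (amenability, which powered the preceding proposition, plays no role here). The only point that demands care is the computation of the $G$-action on morphisms, where one must confirm that the associated isomorphism $\be\com\al^{-1}$ really is independent of the chosen orbit representative and that the conjugations used to compose morphisms commute with those used to read off the $G$-action; granting that, the conjugation formula drops out just as in the final display of the proof of \myref{RightE}.
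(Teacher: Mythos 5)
Your proposal is correct and follows the same route as the paper's own proof: pass to $GL(n,R)$-orbits to identify objects with images $M$, fix base isomorphisms $\et_M\colon R^n\rtarr M$ as orbit representatives, read off morphisms as the $R$-module isomorphisms $\be\com\et_M^{-1}$, and deduce the translation/conjugation description of the $G$-action exactly as in the proof of the symmetric-group analogue. The paper's argument is just a terser version of yours, deferring the bookkeeping to the earlier lemma rather than writing it out.
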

\begin{proof}  The objects $\al$ of $\widetilde{\sG\! \sL}_G(n,R)$ with a fixed
image $M$ are all in the same orbit.  Choose $\et_M\colon R^n\rtarr M$ to fix
an orbit representative.  In $\widetilde{\sG\! \sL}_G(n,R)$, we have a unique
morphism $\io\colon \et \rtarr \be$ for each object $\be\colon R^n\rtarr N$. 
We define $\al\colon M\rtarr N$ to be the composite $\be\com \et_M^{-1}$. 
The $\al$ are isomorphisms of $R$-modules that give orbit representatives 
specifying the morphisms of $\sG\! \sL_G(n,R)$. As in the proof of \myref{RightE}, 
the description of the action of $G$ follows.
\end{proof}

\end{document}